\definecolor{ao(english)}{rgb}{0.0, 0.5, 0.0}
\definecolor{raspberrypink}{rgb}{0.89, 0.31, 0.61}
\newtheorem{theorem}{Theorem}
\numberwithin{theorem}{section}
\numberwithin{equation}{section}
\newtheorem{lemma}[theorem]{Lemma}
\newtheorem{corollary}[theorem]{Corollary}
\newtheorem{proposition}[theorem]{Proposition}
\newtheorem{remark}[theorem]{Remark}
\newtheorem{definition}[theorem]{Definition}
\newenvironment{customthm}[1]
  {\innercustomthm}
  {\endinnercustomthm}
\newenvironment{customcor}[1]
  {\innercustomcor}
  {\endinnercustomcor}
\newcommand{\zra}{P_a}
\newcommand{\enn}{{(n)}}
\newcommand{\hn}{\mathbb{H}^\enn}
\newcommand{\pn}{\pi^\enn}
\title{\vspace*{-5mm} A combinatorial formula for Sahi, Stokman, and Venkateswaran's generalization of Macdonald polynomials}
\author{by {\sc Jason Saied}}
\begin{document}

\maketitle

\begin{abstract}
Sahi, Stokman, and Venkateswaran have constructed, for each positive integer $n$, a family of Laurent polynomials depending on parameters $q$ and $k$ (in addition to $\lfloor n/2\rfloor$ ``metaplectic parameters"), such that the $n=1$ case recovers the nonsymmetric Macdonald polynomials and the $q\rightarrow\infty$ limit yields metaplectic Iwahori-Whittaker functions with arbitrary Gauss sum parameters. In this paper, we study these new polynomials, which we call SSV polynomials, in the case of $GL_r$. We apply a result of Ram and Yip in order to give a combinatorial formula for the SSV polynomials in terms of alcove walks. The formula immediately shows that the SSV polynomials satisfy a triangularity property with respect to a version of the Bruhat order, which in turn gives an independent proof that the SSV polynomials are a basis for the space of Laurent polynomials. The result is also used to show that the SSV polynomials have \emph{fewer} terms than the corresponding Macdonald polynomials. We also record an alcove walk formula for the natural generalization of the permuted basement Macdonald polynomials. We then construct a symmetrized variant of the SSV polynomials: these are symmetric with respect to a conjugate of the Chinta-Gunnells Weyl group action and reduce to symmetric Macdonald polynomials when $n=1$. We obtain an alcove walk formula for the symmetrized polynomials as well. Finally, we calculate the $q\rightarrow 0$ and $q\rightarrow \infty$ limits of the SSV polynomials and observe that our combinatorial formula can be written in terms of alcove walks with only positive and negative folds respectively. In both of these $q$-limit cases, we also observe a positivity result for the coefficients. 
\end{abstract}

\section{Introduction}

In \cite{ssv}, Sahi, Stokman, and Venkateswaran constructed a new representation of the double affine Hecke algebra on a space of Laurent polynomials, generalizing the basic representation of \cite{cherednikBook} and the metaplectic Demazure-Lusztig operators of \cite{cgp}. This representation may be used to define a remarkable family of Laurent polynomials, which we call \emph{SSV polynomials}, generalizing the nonsymmetric Macdonald polynomials: this is done in \cite{ssv} for the case of $GL_r$ and in the upcoming paper \cite{ssv2} for the general case. We will use the methods of Ram and Yip \cite{ry} to give a combinatorial formula for the SSV polynomials of type $GL_r$ in terms of alcove walks. Some of our techniques are the same as those used in \cite{ssv2}. We will start by outlining the basic objects, restricting to the case of $GL_r$. 

Let $r$ be a positive integer, and let $P$ be the weight lattice corresponding to $GL_r$, which we identify with $\mathbb{Z}^r$. The generators of the (untwisted) double affine Hecke algebra $\mathbb{H}$ include elements $T_1, \dots, T_{r-1}$, $X^\mu$ for $\mu\in P$, and $Y^\mu$ for $\mu\in P$. (For a detailed discussion of double affine Hecke algebras in general, see \cite{is} or \cite{cherednikBook}. For the details of the particular case that is relevant to this paper, see Definition~\ref{daha} below.) The basic representation is a representation $\pi$ of $\mathbb{H}$ on the space of Laurent polynomials,
\[\mathbb{F}[x^{\pm 1}]=\textnormal{span}_{\mathbb{F}}\{x^\mu: \mu\in P\},\]
where $\mathbb{F}=\mathbb{C}(k,q)$ for independent parameters $k$ and $q$. We have
\begin{equation*}
    \pi(X^\lambda)x^\mu=x^{\lambda+\mu}
\end{equation*}
for all $\lambda,\mu\in P$, and the family of operators 
\begin{equation}\label{Y's}
    \{\pi(Y^\lambda): \lambda\in P\}
\end{equation}
is simultaneously diagonalizable. The basis of common eigenfunctions of (\ref{Y's}) is written as 
\begin{equation*}
    \{E_\mu: \mu\in P\}
\end{equation*}
and is called the family of \emph{nonsymmetric Macdonald polynomials}: see \cite{mac2}. The symmetric Macdonald polynomials can be derived from the nonsymmetric ones by a symmetrization procedure: see \cite{mac2} or the generalization in Section~\ref{section: symmetric} below. 

In \cite{ssv}, for a positive integer $n$, the authors considered an algebra $\mathbb{H}^\enn$ that is isomorphic to $\mathbb{H}$ (outside type $A$, $\mathbb{H}^\enn$ could be isomorphic to either $\mathbb{H}$ or the double affine Hecke algebra corresponding to the dual root system), with generators including elements $T_1, \dots, T_{r-1}$, $X^\mu$ for $\mu\in nP$, and $Y^\mu$ for $\mu\in nP$. They then constructed a representation $\pi^\enn$ of $\mathbb{H}^\enn$ on $\mathbb{F}^\enn [x^{\pm 1}]$ (where $\mathbb{F}^\enn$ is an extension of $\mathbb{F}$: see Section~\ref{section: metaplectic}) such that
\begin{equation*}
    \pi^\enn(X^\lambda)x^\mu=x^{\lambda+\mu}
\end{equation*}
for all $\lambda\in nP$ and $\mu\in P$,
and the family of operators 
\begin{equation*}
    \{\pi^\enn(Y^\lambda): \lambda\in nP\}
\end{equation*}
is simultaneously diagonalizable. The basis of common eigenfunctions 
\begin{equation*}
    \{E^\enn_\mu: \mu\in P\}
\end{equation*}
is the family of \emph{SSV polynomials}. In the case $n=1$, the SSV polynomials are the nonsymmetric Macdonald polynomials. In fact, for any positive integer $n$ and $\mu\in P$, $E^\enn_{n\mu}$ is equal to the nonsymmetric Macdonald polynomial $E_\mu$ up to a change of variables and parameters: see the final remark of \cite{ssv}. 

\noindent \textbf{Remark. }\emph{In \cite{ssv}, the authors consider a slightly more general situation, depending on a positive integer $n$, a nonzero integer $\kappa$, and $m=n/\kappa$. However, since we are only concerned with the resulting polynomials here, Proposition 5.8 of \cite{ssv} allows us to take $\kappa=1$ and $m=n$.    }

The \emph{intertwiners} are certain special elements $\tau_0^\vee, \dots, \tau_{r-1}^\vee$ in a localization of a subalgebra of $\mathbb{H}$. Their importance for the theory of Macdonald polynomials was first pointed out in the papers of Knop and Sahi \cite{knop, ks, sahiInterpolation} (for $GL_r$),  Cherednik \cite{cherednikBook} (for reduced root systems), and Sahi \cite{sahiKoornwinder} (for type BC). In particular, for all $\widehat{\nu}\in P_a$ (an affinization of $P$) and $0\leq i\leq r-1$, the intertwiners satisfy
\begin{equation}\label{intro intertwiner 1}
    \tau_i^\vee Y^{\widehat{\nu}}=Y^{s_i * \widehat{\nu}} \tau_i^\vee.
\end{equation}
After extending the representation $\pi$ to the localized algebra, this implies that if $s_i\mu\neq \mu$, then
\begin{equation}\label{intro intertwiner 2}
    \pi(\tau_i^\vee)E_\mu\sim E_{s_i\mu},
\end{equation}
where $f\sim g$ means $f$ is a nonzero multiple of $g$. (See Section~\ref{basics} for the definitions of $P_a$, $s_i$, $s_i\mu$, and $s_i * \mu$.) Since $E_0=1$, this gives a concrete way of calculating all nonsymmetric Macdonald polynomials.

In \cite{ry}, Ram and Yip give combinatorial formulas for nonsymmetric and symmetric Macdonald polynomials. These results follow from their combinatorial expansion of the product
\begin{equation*}
    \uptau_{i_1}^\vee\cdots \uptau_{i_\ell}^\vee,
\end{equation*}
where $s_{i_1}\cdots s_{i_\ell}$ is a reduced expression of an element in the affine Weyl group, in terms of alcove walks. The alcove walk model was developed by Ram \cite{ram}, following an idea of Schwer \cite{schwer}. The spirit of this idea can also be seen in a paper of Sahi \cite{sahiWeightMultiplicities}. 

In this paper, we will consider the analog of the intertwiners for $\mathbb{H}^\enn$ and use them to construct SSV polynomials by applying versions of (\ref{intro intertwiner 1}) and (\ref{intro intertwiner 2}). This technique was suggested to the author by Siddhartha Sahi and also plays a major role in \cite{ssv2}. We then apply the intertwiner product formula of \cite{ry} to obtain the following combinatorial formula for SSV polynomials, which is Theorem~\ref{main} below. 

\begin{customthm}{A}\label{intro main}
$\mu\in\mathbb{Z}^r$ uniquely determines a dominant weight $\lambda\in \mathbb{Z}^r$ and an affine Weyl group element $w$ with reduced expression $w=s_{i_1}\cdots s_{i_\ell}$ (see Theorem~\ref{main} for details) such that
\begin{equation}\label{intro E formula}
    E_\mu^\enn = \sum_{p\in \mathcal{B}(\vec{w})} a^\enn(p)x^{n\textnormal{wt}(p)+\phi(p)\lambda},
\end{equation}
where $\mathcal{B}(\vec{w})$ is the set of all alcove walks of type $(i_1,\dots,i_\ell)$ starting at the fundamental alcove 
and wt$(p)$ and $\phi(p)$ are defined in (\ref{end, weight, phi}).
(Definitions related to alcove walks will be given in Section \ref{section: alcove walks}.) The coefficients $a^\enn(p)\in\mathbb{F}^\enn$ are given explicitly in Theorem~\ref{main}.
\end{customthm}
In Theorem~\ref{permuted basement}, we record an alcove walk formula for $T_u E_\mu^\enn$ (where $u\in W_0$ and $\mu\nolinebreak\in\nolinebreak\mathbb{Z}^r$). In the case $n=1$, this recovers the alcove walk formula of \cite{ry}. The polynomials $T_u E_\mu=T_u E_\mu^{(1)}$ were given the name \emph{permuted basement Macdonald polynomials} in \cite{permutedBasement}. This name alludes to the remarkable combinatorial formula satisfied by $T_u E_\mu$, stated in \cite{permutedBasement} and proven in \cite{permutedBasement2}, which generalizes the well-known formula of \cite{hhl} for $E_\mu$.
The polynomials $T_u E_\mu$ are also studied in \cite{cmw, fm,fmo,os}. 

For dominant $\mu\in\mathbb{Z}^r$, we will define the symmetric SSV polynomial $P_\mu^\enn$, which is symmetric with respect to a conjugate of the Chinta-Gunnells Weyl group action: see \cite{cg1,cg2}. (Note that in general, $P_\mu^\enn$ is \emph{not} symmetric with respect to the usual action of the Weyl group.) When $n=1$, $P_\mu^\enn$ recovers the symmetric Macdonald polynomial $P_\mu$.  
In Theorem~\ref{thm: symmetric}, we obtain the following alcove walk formula for $P_\mu^\enn$:
\begin{customthm}{B}\label{intro symmetric}
Let $\mu\in\mathbb{Z}^r$ be dominant. For the same $\lambda\in\mathbb{Z}^r$ and affine Weyl group element $w$ as in the previous result, we have
\begin{equation}\label{intro P formula}
    P_\mu^\enn = \sum_{u\in S_r}k^{\ell(u)}\sum_{p\in\mathcal{B}(u,\vec{w})} b^\enn(p)x^{n\textnormal{wt}(p)+\phi(p)\lambda},
\end{equation}
where $\mathcal{B}(u,\vec{w})$ is the set of all alcove walks of type $(i_1,\dots,i_\ell)$ starting at $u$ and the coefficients $b^\enn(p)\in\mathbb{F}^\enn$ are given explicitly in Theorem~\ref{thm: symmetric}. 
\end{customthm}

In Theorems~\ref{main q limit} and \ref{symmetric q limit}, we give the limits of (\ref{intro E formula}) and (\ref{intro P formula}) as $q$ approaches $0$ and $\infty$. The two limits can be expressed in terms of alcove walks with only positive and only negative folds respectively. We also observe that the coefficients are Laurent polynomials in the metaplectic parameters, and with suitable assumptions on $k$, they have nonnegative coefficients. These limits are of particular interest because the $q\rightarrow\infty$ limit of (\ref{intro E formula}) is related to metaplectic Iwahori-Whittaker functions: see the upcoming work \cite{ssv2}. 

As an application of Theorem~\ref{intro main}, we derive the following result, which is Corollary~\ref{triangularity corollary} below. The triangularity result will also appear in \cite{ssv2}, where it is proven without the use of an explicit formula for $E_\mu^\enn$. 
\begin{customcor}{C}\label{intro triangularity}
For $\mu\in\mathbb{Z}^r$, 
\begin{equation*}
    E_\mu^\enn =x^\mu+\sum_{\nu\overset{ n}{< }\mu} c_\nu x^{\nu}
\end{equation*}
for some \emph{nonzero} scalars $c_\nu\in\mathbb{F}^\enn$. (See the discussion following Theorem~\ref{main} for the definition of the partial order $\overset{ n}{< }$.)  
\end{customcor}

This result allows us to precisely characterize the powers of $x$ appearing in $E_\mu^\enn$. In the case $n=1$, the fact that the coefficients $c_\nu$ are nonzero is known: see \cite{sahiUnpublished}, or \cite{sahiKoornwinder2} for the case of Koornwinder polynomials.

We may also relate the terms of SSV polynomials to the terms of the corresponding Macdonald polynomials. This result is stated in greater generality in Corollary~\ref{macdonald comparison} below. 

\begin{customcor}{D}\label{intro macdonald comparison}
Let $\mu,\nu\in\mathbb{Z}^r$. If $x^\nu$ appears with nonzero coefficient in $E_\mu^\enn$, then $x^\nu$ appears with nonzero coefficient in the Macdonald polynomial $E_\mu^{(1)}$.
\end{customcor}

The structure of the work is as follows. In Section~\ref{basics}, we give all the necessary terminology for working with root systems, Weyl groups, alcove walks, and double affine Hecke algebras. We begin with the standard notions, then introduce ``metaplectic" variants that depend on the positive integer $n$. We also give formulas for the representation $\pi^\enn$ constructed in \cite{ssv} and obtain some basic consequences. 
In Section~\ref{section: ssv polys}, we construct the SSV polynomials and the intertwiners, then use them to derive the main combinatorial formula, Theorem~\ref{intro main}. We apply this result to prove Corollary~\ref{intro triangularity}.
We also prove that for fixed $\mu\in\mathbb{Z}^r$ and positive integers $m|n$, the powers of $x$ appearing in our formula (\ref{intro E formula}) for $E_\mu^\enn$ are a subset of the powers appearing in the corresponding formula for $E_\mu^{(m)}$. Corollary~\ref{intro macdonald comparison} follows from this result.  
In Section~\ref{section: symmetric}, we define and establish some basic properties of symmetrized SSV polynomials $P_\mu^\enn$ analogous to those for the symmetric Macdonald polynomials, then prove Theorem~\ref{intro symmetric}. In Section~\ref{section: q limits} give alcove walk formulas for $q$-limits of $E_\mu^\enn$ and $P_\mu^\enn$ and observe a simple positivity result. We conclude with an appendix listing, for several small values of $\mu$ and $n$, the expansions of $E_\mu^\enn$ and $P_\mu^\enn$ according to the new formulas. 

\section*{Acknowledgements}
The author would like to thank Siddhartha Sahi for his advice regarding this work and his mentorship throughout the author's PhD program. He would also like to thank Songhao Zhu for his collaboration during the early stages of this project; Vidya Venkateswaran for her insightful talks at Rutgers University explaining the content of \cite{ssv}; and Henrik Gustafsson, Jasper Stokman, and Vidya Venkateswaran for noticing mistakes in and suggesting improvements to this work. 

\section{Background and notation}\label{basics}

\subsection{Root Systems and Weyl Groups}\label{section: root systems}

Fix $r\geq 2$. Let $\{\epsilon_i: 1\leq i\leq r\}$ be the standard orthonormal basis of $\mathbb{R}^r$, with associated inner product $(\cdot,\cdot)$ and norm $||\cdot||$. The root system of type $A_{r-1}$ is 
\begin{equation*}
    \Phi=\{\epsilon_i-\epsilon_j: 1\leq i\neq j\leq r\}.
\end{equation*}
The set of positive roots is 
\begin{equation*}
    \Phi_+=\{\epsilon_i-\epsilon_j: 1\leq i< j\leq r\},
\end{equation*}
and the set of simple roots is 
\begin{equation*}
    \Delta=\{\alpha_1, \dots, \alpha_{r-1}\},
\end{equation*}
where $\alpha_i=\epsilon_i-\epsilon_{i+1}$. The highest root is 
\begin{equation}\label{def: theta}
    \theta=\epsilon_1-\epsilon_r=\alpha_1+\dots+\alpha_{r-1}.
\end{equation}
The root lattice is $Q=\textnormal{span}_{\mathbb{Z}}\Phi$, and the GL$_r$ weight lattice is $P=\textnormal{span}_{\mathbb{Z}}\{\epsilon_1, \dots,\epsilon_r\}$, which we identify with $\mathbb{Z}^r.$ (We will refer to this space as both $P$ and $\mathbb{Z}^r$.) We will use the notations 
\begin{equation*}
    \lambda=\sum_{i=1}^r \lambda_i \epsilon_i=(\lambda_1, \dots, \lambda_r)
\end{equation*}
interchangeably for an element of $P\cong\mathbb{Z}^r$. $\lambda\in P$ is called \emph{dominant} if, for all $1\leq i<r$, $\lambda_i\geq \lambda_{i+1}$. 

The Weyl group of the finite root system of type $A_{r-1}$ is 
\begin{equation*}
    W_0= S_r,
\end{equation*}
the symmetric group on $\{1, \dots, r\}$. 
For $1\leq i\leq r-1$, let $s_i$ be the transposition $(i\hspace{.1 in} i+1)$. It is well-known that $s_1,\dots,s_{r-1}$ generate $W_0$. 

For any root $\alpha\in \Phi$, we have the reflection $s_\alpha$, the transformation of $\mathbb{R}^r$ given by
\begin{equation}\label{def: refl}
    s_\alpha\lambda=\lambda-(\lambda,\alpha)\alpha
\end{equation}
(Note that we do not need to discuss coroots here, since $(\alpha,\alpha)=2$ for all $\alpha\in \Phi$.) Explicitly, $s_{\alpha_i}$ is the transformation that exchanges $\lambda_i$ and $\lambda_{i+1}$. Note that $P$ is invariant under these transformations. 
We also have 
\begin{equation*}
    s_\theta=s_1 s_2\cdots s_{r-2}s_{r-1}s_{r-2}\cdots s_2 s_1.
\end{equation*}

There is an action of $W_0$ on $\mathbb{R}^r$, and on $\mathbb{Z}^r\subseteq \mathbb{R}^r$, given by
\[s_i\mapsto s_{\alpha_i}.\]
Under this map, the element $s_\theta\in W_0$ is taken to the reflection
$s_\theta$ of $\mathbb{R}^r$ (recall the definition of $\theta$ in (\ref{def: theta})), so there is no conflict of notation. 


Consider the space $\mathbb{R}^r_a=\mathbb{R}^{r+1}$. We identify the subspace
\[\{(v_1,\dots,v_r,0): v_1,\dots,v_r\in\mathbb{R}\}\]
with $\mathbb{R}^r$ and let $\delta=(0,\dots,0,1)$, so that
\[\mathbb{R}^r_a=\mathbb{R}^r\oplus \mathbb{R}\delta.\]
We then define 
\begin{equation*}
    P_a=P\oplus \mathbb{Z}\delta\cong \mathbb{Z}^r\oplus \mathbb{Z}\delta.
\end{equation*}
The symmetric bilinear form $(\cdot,\cdot)$ on $\mathbb{R}^r$ extends to a symmetric bilinear form on $\mathbb{R}^r_a$ (and its subspace $P_a$) by defining
\[(\delta,v+s\delta)=0\] 
for all $v\in P$ and $s\in\mathbb{R}$. 

The 
set of (real) affine roots is 
\begin{equation*}
    \widetilde{\Phi}=\{\alpha+s\delta: \alpha\in \Phi, s\in\mathbb{Z}\}\subseteq P_a.
\end{equation*}
Define 
\begin{equation*}
    \alpha_0=\delta-\theta\in \widetilde{\Phi}.
\end{equation*}
The set of (real) \emph{positive affine roots} is
\begin{equation*}
    \widetilde{\Phi}_+=\Phi_+\cup \{\alpha+s\delta: s>0, \alpha\in \Phi\}.
\end{equation*}
Every element of $\widetilde{\Phi}_+$ is a nonnegative linear combination of $\alpha_0,\dots,\alpha_{r-1}$.

We have the extended affine Weyl group $W=P\rtimes W_0$. We denote the elements of $W$ by $\tau(\mu)w$ for $\mu\in P$ and $w\in W_0=S_r$. They satisfy the relation
\begin{equation*}
    w\tau(\mu)w^{-1}=\tau(w\mu).
\end{equation*}
Let 
\begin{equation*}
    s_0=\tau(\theta) s_\theta
\end{equation*}
and 
\begin{equation*}
    \omega=s_1 s_2 \cdots s_{r-1}\tau(\epsilon_r).
\end{equation*}
Then $W$ is generated by $s_0,\dots,s_{r-1},\omega$. Let
\begin{equation*}
    W_{\textnormal{Cox}}=Q\rtimes W_0\subseteq W.
\end{equation*}
$W_{\textnormal{Cox}}$ is a Coxeter group with generators $s_0,\dots, s_{r-1}$. For $w\in W_{\textnormal{Cox}}$, the \emph{length} of $w$, denoted $\ell(w)$, is the number of generators in a reduced expression for $w$. 

We have a representation of $W$ on $\mathbb{R}^r_a$, where
\begin{equation*}
    s_i*\widehat{v}=\widehat{v}-(\widehat{v},\alpha_i)\alpha_i=(s_iv)+s \delta
\end{equation*}
and 
\begin{equation*}
    \tau(\mu)*\widehat{v}=\widehat{v}-(\widehat{v},\mu)\delta=v+(s-(v,\mu))\delta.
\end{equation*}
for $1\leq i\leq r-1$, $\mu\in P$, and $\widehat{v}=v+s\delta\in \mathbb{R}^r_a$. 
Note that this restricts to a linear action on $P_a$. 

We also have an affine action of $W$ on $\mathbb{R}^r$, where $s_i\in W_0$ acts on $v\in\mathbb{R}^r$ by the reflection $s_{\alpha_i}$ as in (\ref{def: refl}), and 
\begin{equation*}
    \tau(\mu)v=v+\mu
\end{equation*}
for $\mu\in P$. It is easy to see directly that this action is faithful. 
Under the affine action, we have
\begin{equation*}
    s_0 (v_1, \dots, v_r)=(v_r+1, v_2, v_3,\dots, v_{r-1}, v_1-1).
\end{equation*}
Two invariant subsets of particular interest are $\mathbb{Z}^r$ and
\begin{equation*}
    \mathfrak{h}^*=\textnormal{span}_{\mathbb{R}}\Phi=\{(v_1, \dots, v_r)\in\mathbb{R}^r: \sum v_i=0.\}
\end{equation*}

Notice that for $w\in W$, we denote the linear action on $v\in\mathbb{R}^r_a$ by $w*v$ and the affine action on $\widehat{v}\in\mathbb{R}^r$ by $w\widehat{v}$. For $w\in W_0$, where the two actions are essentially the same, we will drop the $*$ from our notation. 

\begin{remark}
Both the affine and linear actions of $W$ come from the action of $W$ on the Cartan subalgebra of the corresponding untwisted affine Lie algebra. Details may be found in Sections 6.5 and 6.6 of \cite{kac}. 
\end{remark}

We view $P_a$ as a set of affine functions on $\mathbb{R}^r$ by 
\begin{equation}\label{angle bracket n}
    \langle \mu+s\delta,v\rangle =(\mu,v)+s,
\end{equation}
where $\mu\in P$, $s\in\mathbb{Z}$, and $v\in \mathbb{R}^r$. 
We may then define, for all $\widehat{\alpha}=\alpha+s\delta\in P_a$ with $(\alpha,\alpha)\neq 0$, the affine transformation 
\[s_{\widehat{\alpha}}:\mathbb{R}^r\rightarrow \mathbb{R}^r\] 
given by
\begin{equation}\label{general reflection}
    s_{\widehat{\alpha}}v=v-\dfrac{2\langle \widehat{\alpha},v\rangle}{(\alpha,\alpha)} \alpha
\end{equation}
for $v\in\mathbb{R}^r$. In particular, if $\widehat{\alpha}=m\beta+sm^2\delta$ for some positive integer $m$, then
\begin{equation}\label{n reflection}
    s_{\widehat{\alpha}}v=v-\langle \beta+sm\delta,v\rangle\beta=(s_\beta v)-sm\beta.
\end{equation}
Of course, for $0\leq i\leq r-1$ and $v\in \mathbb{R}^r$, this reduces to
\begin{equation}\label{si as reflection}
    s_{\alpha_i}v=s_i v.
\end{equation}
Further, for all $\widehat{\alpha}\in \widetilde{\Phi}$, there is a unique element $w\in W$ such that for all $v\in\mathbb{R}^r$, 
\[wv=s_{\widehat{\alpha}}v.\]
We will identify $w$ and $s_{\widehat{\alpha}}$.

\subsection{Alcove walks and Bruhat order}\label{section: alcove walks}

We now give the necessary definitions to work with alcove walks and the Bruhat order. We will mostly follow the notation of \cite{ry} for alcove walks.

For $\widehat{\alpha}\in\widetilde{\Phi}$, let
\begin{equation*}
    \mathfrak{h}^{\widehat{\alpha}}=\{v\in\mathbb{R}^r: \langle \widehat{\alpha},v\rangle =0\}
\end{equation*}
(recalling the notation (\ref{angle bracket n})). 
We then define the \emph{alcoves} of $\mathbb{R}^r$ to be the connected components of 
\begin{equation*}
    \mathbb{R}^r\setminus\left( \bigcup_{\widehat{\alpha}\in \widetilde{\Phi}} \mathfrak{h}^{\widehat{\alpha}}  \right).
\end{equation*}
We call the hyperplanes bounding an alcove its \emph{walls}. 
The \emph{fundamental alcove} is 
\begin{equation}\label{fundamental alcove}
    \mathcal{A}=\{v\in\mathbb{R}^r: \langle \alpha_i,v\rangle >0\textnormal{ for }0\leq i\leq r-1\}=\{(v_1, \dots, v_r)\in\mathbb{R}^r: v_1 > v_2 > \dots > v_r, v_1-v_r<1\},
\end{equation}
with walls $\mathfrak{h}^{\alpha_0},\dots,\mathfrak{h}^{\alpha_{r-1}}$. (Note that the fundamental alcove is determined by our choice of simple roots.)
The action of $W_{\textnormal{Cox}}$ sends alcoves to alcoves: in fact, for $0\leq i\leq r-1$, $s_i$ acts by reflecting in the wall $\mathfrak{h}^{\alpha_{i}}$.
 We also have the \emph{closed fundamental alcove}
\begin{equation*}
    \overline{\mathcal{A}}=\{v\in\mathbb{R}^r: \langle \alpha_i,v\rangle \geq 0\textnormal{ for }0\leq i\leq r-1\}=\{(v_1, \dots, v_r)\in\mathbb{R}^r: v_1 \geq v_2 \geq \dots > v_r, v_1-v_r\leq 1\}.
\end{equation*}

The following lemma is well-known: see, for instance, \cite{humphCox}. 

\begin{lemma}\label{fundamental domain R}
For any $v\in \mathbb{R}^r$, there exist $w\in W_{\textnormal{Cox}}$ and unique $v_+\in \overline{\mathcal{A}}$ such that $wv_+=v$. If we require $w$ to have the shortest possible length, then $w$ is also unique. 
\end{lemma}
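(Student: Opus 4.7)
The plan is to split the lemma into three claims: existence of a pair $(w, v_+)$, uniqueness of $v_+$, and uniqueness of a minimal-length $w$. First I would establish existence. By definition, the alcoves are the connected components of the complement of the locally finite hyperplane arrangement $\{\mathfrak{h}^{\widehat{\alpha}} : \widehat{\alpha} \in \widetilde{\Phi}\}$ in $\mathbb{R}^r$, and their closures cover $\mathbb{R}^r$; moreover, (\ref{si as reflection}) and (\ref{n reflection}) show that each generator $s_i$ ($0 \le i \le r-1$) acts by reflection in the hyperplane $\mathfrak{h}^{\alpha_i}$, so $W_{\textnormal{Cox}}$ permutes alcoves. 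A standard induction on the minimal number of hyperplanes separating a given alcove from $\mathcal{A}$ shows that this permutation action is transitive. Given $v \in \mathbb{R}^r$, pick any alcove $w\mathcal{A}$ with $v \in \overline{w\mathcal{A}}$; then $v_+ := w^{-1} v$ lies in $\overline{\mathcal{A}}$ and satisfies $w v_+ = v$, as required.

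For uniqueness of $v_+$, the key claim is that $\overline{\mathcal{A}}$ is a fundamental domain for $W_{\textnormal{Cox}}$: any two points of $\overline{\mathcal{A}}$ in the same $W_{\textnormal{Cox}}$-orbit must coincide. The standard proof proceeds by induction on the length of an element $u$ with $u v_+' = v_+$. If $\ell(u) > 0$ and $u = s_{i_1} \cdots s_{i_k}$ is reduced, one walks along the associated minimal gallery from $\mathcal{A}$ to $u\mathcal{A}$ to show that $v_+'$ must lie on the wall $\mathfrak{h}^{\alpha_{i_k}}$, so $s_{i_k} v_+' = v_+'$ and the length can be reduced. Rather than reprove this standard fact, I plan to cite the treatment in \cite{humphCox}; verifying the fundamental domain property is the principal technical step of the lemma, so this is where I would expect readers to need to consult the reference.

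For uniqueness of a minimal-length $w$, the preceding argument yields as a byproduct that the stabilizer $W_{v_+} \subseteq W_{\textnormal{Cox}}$ is the standard parabolic subgroup generated by those $s_i$ ($0 \le i \le r-1$) with $v_+ \in \mathfrak{h}^{\alpha_i}$. Any two elements $w, w'$ satisfying $w v_+ = w' v_+ = v$ then satisfy $w^{-1}w' \in W_{v_+}$, so they lie in a common left coset of $W_{v_+}$. Classical Coxeter theory guarantees that each coset of a standard parabolic subgroup contains a unique minimal-length representative, so the shortest $w$ with $w v_+ = v$ is unique. This completes the three-step plan, with the only substantive input being the fundamental domain property, which I would cite.
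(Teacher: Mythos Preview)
Your proposal is correct, and in fact goes beyond what the paper does: the paper states this lemma as well-known and simply refers the reader to \cite{humphCox} without giving any argument. Your three-step sketch (transitivity on alcoves for existence, the fundamental domain property for uniqueness of $v_+$, and the parabolic-coset argument for uniqueness of minimal $w$) is exactly the standard proof found in that reference, so your approach is entirely in line with the paper's intent.
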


Since the fundamental alcove $\mathcal{A}$ is not preserved by the action of any nontrivial element of $W_{\textnormal{Cox}}$, this lemma gives a bijection between $W_{\textnormal{Cox}}$ and the set of alcoves in $\mathbb{R}^r$, given by
\begin{equation*}
    w\mapsto w\cdot \mathcal{A}
\end{equation*}
We identify $W_{\textnormal{Cox}}$ and the set of alcoves via this bijection. Thus we will sometimes use the affine Weyl group element $1$ to refer to the fundamental alcove $\mathcal{A}$, as in \cite{ry}.

\begin{figure}[ht]
\begin{center}
\begin{tikzpicture}
\node[inner sep=0pt] (mainpic) at (0,0)
   {\includegraphics[scale=0.8]{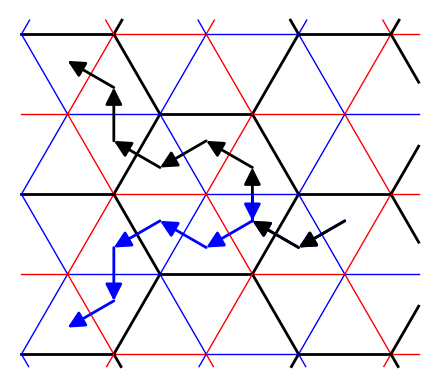}};
\node[draw=none] at (4.5,0.0) {$\mathfrak{h}^{\alpha_0}$};
\node[draw=none] at (3.85,0.15) {\scriptsize $+$};
\node[draw=none] at (3.85,-0.15) {\scriptsize $-$};

\node[draw=none] at (4.5,1.0) {$\mathfrak{h}^{\alpha_2}$};
\node[draw=none] at (3.85,1.0) {\scriptsize $+$};
\node[draw=none] at (4.1,0.75) {\scriptsize $-$};

\node[draw=none] at (-0.6,3.7) {$\mathfrak{h}^{\alpha_1}$};
\node[draw=none] at (-0.3,3.6) {\scriptsize $+$};
\node[draw=none] at (-0.6,3.4) {\scriptsize $-$};

\node[draw=none] at (2.8,-0.35) {$\mathcal{A}$};
\node[draw=none] at (-3.1,3) {\footnotesize end$(p)$};
\node[draw=none] at (-3.1,-3) {\footnotesize end$(p')$};
\end{tikzpicture}
\end{center}
\caption{\small Alcoves and alcove walks in the case $r=3$. We restrict our attention to $\textnormal{span}_{\mathbb{R}}\{\alpha_1,\alpha_2\}$ so that our picture is two-dimensional. We see two alcove walks of type $(1,0,1,2,1,0,1,2)$ starting at the fundamental alcove $\mathcal{A}$: $p$ (black) and $p'$ (blue once it deviates from $p$). $p$ is unfolded, and $p'$ is obtained by folding $p$ at step $3$; this fold is negative. $p$ has length $8$, and $p'$ has length $7$.} 
\label{figure}
\end{figure}

There is a natural orientation on the set of hyperplanes, determined by the choice of positive roots, defined as follows. Consider a hyperplane $\mathfrak{h}^{\widehat{\alpha}}$ where
$\widehat{\alpha}\in\widetilde{\Phi}$ has the form
\begin{equation*}
    \widehat{\alpha}=\alpha+s\delta, \alpha\in\Phi_+.
\end{equation*}
For $v\in\mathbb{R}^r$, we say that $v$ is on the positive side of $\mathfrak{h}^{\widehat{\alpha}}$ if $\langle \widehat{\alpha},v\rangle>0$ and is on the negative side of $\mathfrak{h}^{\widehat{\alpha}}$ if $\langle \widehat{\alpha},v\rangle<0$. (Note that points in $\mathcal{A}$ are on the positive side of $\mathfrak{h}^{\alpha_i}$ for $1\leq i\leq r-1$ but are on the \emph{negative} side of $\mathfrak{h}^{\alpha_0}=\mathfrak{h}^{\theta-\delta}$.) This is the ``periodic orientation" described in \cite{ry}, so called because for any $\alpha\in \Phi$ and $s,t\in\mathbb{Z}$, $\mathfrak{h}^{\alpha+s\delta}$ and $\mathfrak{h}^{\alpha+t\delta}$ have parallel orientations. 

For $w\in W_{\textnormal{Cox}}$, fix a reduced expression $w=s_{i_1}\cdots s_{i_\ell}$. Let $\vec{w}=(i_1, \dots, i_\ell)$. An \emph{alcove walk} of type $\vec{w}$ is a sequence 
\begin{equation}\label{alcove walk}
    p=(A_0, A_1, \dots, A_\ell)
\end{equation}
of alcoves, where for each $1\leq j\leq \ell$, either $A_j=A_{j-1}$ or $A_j=A_{j-1}s_{i_j}$. (Recalling the identification of alcoves and elements of $W_{\textnormal{Cox}}$, the notation $A_{j-1}s_{i_j}$ refers to the product in $W_{\textnormal{Cox}}$. The corresponding alcove shares a wall with the alcove $A_{j-1}$.) 
We say that $p$ has a \emph{fold} at step $j$ if $A_j=A_{j-1}$ and a \emph{crossing} otherwise: this fold or crossing is \emph{positive} if $A_{j-1}$ is on the positive side of the hyperplane separating $A_{j-1}$ and $A_{j-1}s_{i_j}$, and is negative otherwise. Let $f_+(p)$ (respectively $f_-(p)$) be the set of all $j$ such that $p$ has a positive (respectively negative) fold at step $j$. 
We say $p$ is \emph{unfolded} if it has no folds. 
We define the \emph{length} of $p$ to be the number of hyperplanes of the form $\mathfrak{h}^{\widehat{\alpha}}$ between $A_0$ and $A_\ell$. If $p$ is unfolded, then the length of $p$ is $\ell$.
Examples of these notions are given in Figure~\ref{figure}.

For $u\in W_{\textnormal{Cox}}$ and $\vec{w}$ as above, define $\mathcal{B}(u,\vec{w})$ to be the set of all alcove walks of type $\vec{w}$ with $A_0=u$. We use the notation $\mathcal{B}(\vec{w})=\mathcal{B}(u,\vec{w})$. 

Given an alcove walk $p$ as in (\ref{alcove walk}), let end$(p)=A_\ell$. We have the decomposition 
\begin{equation}\label{end, weight, phi}
    \textnormal{end}(p)=\tau(\textnormal{wt}(p))\phi(p),
\end{equation}
where wt$(p)\in Q$ (since we are only using $W_{\textnormal{Cox}}$, not $W$) and $\phi(p)\in W_0$. Suppose further that $p$ does not have a fold at step $j$, and the hyperplane separating the alcoves $A_{j-1}$ and $A_j$ is $\mathfrak{h}^{\widehat{\alpha}}$ for $\widehat{\alpha}\in\widetilde{\Phi}_+$. We say that an alcove walk $p'=(A_0',\dots, A_\ell')$ of the same type as $p$ is obtained by \emph{folding $p$ at step $j$} if $A_i'=A_i$ for $i<j$, $A_j'=A_{j-1}$, and $A_i'=s_{\widehat{\alpha}}A_i$ for $i>j$. Geometrically, to obtain $p'$ from $p$, we reflect the tail of $p$ in the hyperplane $\mathfrak{h}^{\widehat{\alpha}}$. Note that
\begin{equation}\label{folding ends}
    \textnormal{end}(p')=s_{\widehat{\alpha}}\textnormal{end}(p).
\end{equation}

\begin{remark}\label{alcove walk explanation}
For the unfamiliar, we would like to elaborate on the relationship between words in $W_{\textnormal{Cox}}$ and alcove walks. Let $w\in W_{\textnormal{Cox}}$ and consider the corresponding alcove, also called $w$. Given an expression $w=s_{i_1}\cdots s_{i_\ell}$, we may use the property $s_j s_{\widehat{\alpha}}s_j=s_{s_j * \widehat{\alpha}}$ to write
\begin{equation*}
    w=s_{i_1}\cdots s_{i_\ell}= s_{s_{i_{\ell -1}}\cdots s_{i_1}*\alpha_{i_\ell}} \cdots s_{s_{i_1}*\alpha_{i_2}}s_{\alpha_{i_1}}.
\end{equation*}
We note that each alcove in the sequence
\begin{equation*}
    \mathcal{A}, s_{\alpha_{i_1}}\mathcal{A}, s_{s_{i_1}*\alpha_{i_2}}s_{\alpha_{i_1}}\mathcal{A}, \dots, s_{s_{i_{\ell -1}}\cdots s_{i_2}*\alpha_{i_\ell}} \cdots s_{s_{i_1}*\alpha_{i_2}}s_{\alpha_{i_1}}\mathcal{A}=w
\end{equation*}
shares a wall with the one preceding it: for example, the wall between $s_{\alpha_{i_1}}\mathcal{A}$ and $s_{s_{i_1}*\alpha_{i_2}}s_{\alpha_{i_1}}\mathcal{A}$ is the hyperplane corresponding to $s_{i_1}*\alpha_{i_2}$, and the same relationship holds in general. 
This gives an unfolded alcove walk with starting alcove $\mathcal{A}=1$ and ending alcove $w$. In fact, this yields a bijection between expressions for $w$ and unfolded alcove walks (of any type) that start at $1$ and end at $w$. Under this bijection, reduced expressions for $w$ correspond to unfolded alcove walks from $1$ to $w$ \emph{with the smallest possible number of steps}. This allows us to characterize the length of $w$ as the length of an unfolded alcove walk from $1$ to $w$.
%

For $w=s_{i_1}\cdots s_{i_\ell}$, a reduced expression, we call expressions 
\begin{equation*}
    s_{i_{j_1}}\cdots s_{i_{j_b}}
\end{equation*}
for $1\leq j_1<\cdots <j_b\leq \ell$ \emph{subwords} of $s_{i_1}\cdots s_{i_\ell}$. Given an alcove walk $p$ of type $(i_1, \dots, i_\ell)$, let $1\leq j_1<\dots<j_b\leq \ell$ be the indices at which $p$ does \emph{not} have a fold. Then 
\begin{equation*}
    \textnormal{end}(p)=s_{i_{j_1}}\cdots s_{i_{j_b}}.
\end{equation*}
\end{remark}

Finally, we recall the Bruhat order. Let $\overline{W}$ be any Coxeter group and let $\overline{\Phi}$ be its root system. We will use the following definition and lemma in one of two contexts: $\overline{W}=W_{\textnormal{Cox}}$ and $\overline{\Phi}=\widetilde{\Phi}$, or $\overline{W}=W_{\textnormal{Cox}}^\enn$ and $\overline{\Phi}=\widetilde{\Phi}^\enn$ (see Section~\ref{section: metaplectic}). 

\begin{definition}\label{bruhat def}
For $w\in \overline{W}$ and $\widehat{\alpha}\in \overline{\Phi}$, define $w<s_{\widehat{\alpha}}w$ if $\ell(w)<\ell(s_{\widehat{\alpha}}w)$. Define a partial order $\leq$, which we call the \emph{Bruhat order}, on $\overline{W}$ by taking the transitive closure of this relation. 
\end{definition}

The following result may be found in \cite{humphCox} for a general Coxeter group. 

\begin{lemma}\label{bruhat subwords}
Let $w,w'\in \overline{W}$ and fix a particular choice of reduced expression $w=s_{i_1}\cdots s_{i_\ell}$ for $w$. $w'\leq w$ if and only if $w'$ can be obtained as a subword of this reduced expression. 
\end{lemma}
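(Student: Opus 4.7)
The lemma is the classical subword property of the Bruhat order on a Coxeter group, and my plan is to prove it using the Strong Exchange Condition (SEC), which holds in any Coxeter group and therefore applies uniformly to both $W_{\textnormal{Cox}}$ and $W_{\textnormal{Cox}}^\enn$; this uniformity is presumably why the lemma is stated generically for $\overline{W}$. Recall that SEC asserts: if $w = s_{i_1}\cdots s_{i_\ell}$ is any expression and $t = s_{\widehat{\alpha}}$ is a reflection with $\ell(tw)<\ell(w)$, then $tw$ equals $s_{i_1}\cdots \widehat{s_{i_j}}\cdots s_{i_\ell}$ for some index $j$.

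For the forward direction, I would induct on $\ell(w)-\ell(w')$. By Definition~\ref{bruhat def}, it suffices to treat a single covering step: assume $w' < w$ with $w = s_{\widehat{\alpha}}w'$ and $\ell(w') = \ell(w)-1$. Applied to the fixed reduced expression $w = s_{i_1}\cdots s_{i_\ell}$ and the reflection $s_{\widehat{\alpha}}$, SEC produces an index $j$ with $w' = s_{i_1}\cdots \widehat{s_{i_j}}\cdots s_{i_\ell}$, and this expression is automatically reduced because its length is $\ell-1 = \ell(w')$. Composing the deletions produced at each covering along a Bruhat chain from $w'$ up to $w$ displays $w'$ as a subword of the original expression.

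For the reverse direction, I would first establish the single-deletion statement: if $w'' = s_{i_1}\cdots \widehat{s_{i_j}}\cdots s_{i_\ell}$, then $w''<w$. Setting $\widehat{\alpha} = (s_{i_1}\cdots s_{i_{j-1}})\ast \alpha_{i_j}$, a direct calculation gives $w = s_{\widehat{\alpha}}w''$, and since the original expression was reduced we have $\ell(w'') \leq \ell-1 < \ell(w)$, so $w''<w$ by Definition~\ref{bruhat def}. For an arbitrary subword $w' = s_{i_{j_1}}\cdots s_{i_{j_b}}$ of $w$, I would induct on $\ell(w)$, separating the cases $i_\ell\in\{i_{j_1},\dots,i_{j_b}\}$ or not, and combine the single-deletion statement with the inductive hypothesis applied to $s_{i_1}\cdots s_{i_{\ell-1}}$ together with the standard lifting property of Bruhat order.

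The main obstacle is mostly bookkeeping: iterating single deletions into multi-letter deletions is subtle because after deleting one letter the remaining expression for $w''$ is no longer guaranteed to be reduced, so one cannot directly re-apply the single-deletion lemma. However, a Matsumoto-style passage to a reduced expression for $w''$, combined with the lifting property, resolves this cleanly. All of these ingredients are standard in the theory of Coxeter groups and can be imported wholesale from \cite{humphCox}; no new mathematics is needed, and the lemma is included primarily to fix conventions for the rest of the paper.
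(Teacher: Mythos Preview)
The paper does not actually prove this lemma; immediately before the statement it says ``The following result may be found in \cite{humphCox} for a general Coxeter group,'' and no argument is given. Your sketch is precisely the standard proof one finds in that reference (Strong Exchange plus induction and the lifting property), so there is nothing to compare: you have supplied what the paper chose to outsource.

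One small quibble: in the forward direction you reduce to a \emph{covering} step, i.e.\ $\ell(w')=\ell(w)-1$. Definition~\ref{bruhat def} as stated only gives a chain in which each link satisfies $\ell(w_i)<\ell(s_{\widehat{\alpha}}w_i)$, not necessarily with a length drop of exactly one, so the reduction to covers is not automatic from that definition. This is harmless, since SEC applies whenever $\ell(s_{\widehat{\alpha}}w)<\ell(w)$ and still yields a single deletion from the reduced word; you can then iterate down the chain exactly as you describe, handling the possible non-reducedness of intermediate subwords via Matsumoto as you already note. So the argument is fine, just drop the unnecessary assumption that each step is a cover.
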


\subsection{Metaplectic versions}\label{section: metaplectic}

For the rest of the paper, fix a positive integer $n$, which we call the \emph{metaplectic degree}. For any integer $j$, let $r_n(j)\in\{0, \dots, n-1\}$ be the residue modulo $n$. 
Fix an invertible parameter $k$. 
Also fix parameters $G^\enn_j$ for $j\in\mathbb{Z}$, satisfying the following conditions:
\begin{enumerate}
    \item $G^\enn_0=k.$
    \item $G^\enn _j G^\enn_{-j}=1.$
    \item $G_{j}^\enn=G_{r_n(j)}^\enn$.
\end{enumerate}
\begin{remark}
These conditions imply that $\{G^\enn_j:j\in\mathbb{Z}\}$ is determined by $G^\enn_0, \dots, G^\enn_{\lfloor \frac{n}{2}\rfloor}$. Further, if $n$ is even, we have
\begin{equation*}
    G_{\frac{n}{2}}^\enn=\pm 1.
\end{equation*}
In \cite{ssv}, parameters $g_j^\enn$ are used, with the dictionary being
\begin{equation*}
    G^\enn_j=-kg^\enn_{-j}.
\end{equation*}
In that work, the scalar $G_{\frac{n}{2}}^\enn\in\{\pm 1\}$ is called $\epsilon$. We will continue to call it $G_{\frac{n}{2}}^\enn$ in order to keep the notation uniform, but will keep in mind that it is not a free parameter. 
\end{remark}

We introduce the fields
\begin{equation*}
    \mathbb{K}^{(n)}=\mathbb{C}(k,G^{(n)}_1, \dots, G^{(n)}_{\lfloor \frac{n-1}{2}\rfloor})\textnormal{ and }\mathbb{F}^\enn = \mathbb{K}^\enn (q),
\end{equation*}
where $q$ is another independent parameter. 
In particular, $\mathbb{K}^{(1)}=\mathbb{K}^{(2)}=\mathbb{C}(k)$ and $\mathbb{K}^{(3)}=\mathbb{C}(k,G_1^{(3)}).$

We now introduce versions of the concepts in the previous sections that depend on the metaplectic degree $n$. The case $n=1$ will recover the usual notions. Let 
\begin{equation*}
    W^\enn=nP\rtimes W_0\subseteq W.
\end{equation*}
In terms of the affine action on $\mathbb{R}^r_a$, $W^\enn$ is generated by reflections in the finite roots and translations by $nP$. We have an isomorphism $\Psi^\enn: W\rightarrow W^\enn$ given by
\begin{equation*}
    s_i\mapsto s_i\textnormal{ and }\tau(\lambda)\mapsto\tau(n\lambda)
\end{equation*}
for $1\leq i\leq r-1$ and $\lambda\in P$. 

\begin{remark}
Since $W$, and therefore its subgroup $W^\enn$, acts faithfully on $\mathbb{R}^r$, we may identify both with groups of affine transformations of $\mathbb{R}^r$. In this context, it is clear that $\Psi^\enn$ is an isomorphism, since the two groups differ only by scaling. 
\end{remark}

The ``metaplectic" versions of the other notions are all determined by the map $\Psi^\enn$. Let 
\begin{equation*}
    s_0^\enn=\Psi^\enn(s_0)=\tau(n\theta)s_\theta,
\end{equation*}
and for uniformity of notation let 
\begin{equation*}
    s_i^\enn=\Psi^\enn(s_i)=s_i
\end{equation*}
for $1\leq i\leq r-1$. Also define
\begin{equation}\label{omega def}
    \omega^\enn=\Psi^\enn(\omega)=s_1 s_2\cdots s_{r-1} \tau(n\epsilon_r).
\end{equation}
Then $s_0^\enn, \dots, s_{r-1}^\enn,\omega^\enn$ generate $W^\enn$. We also introduce the group 
\begin{equation*}
    W^\enn_{\textnormal{Cox}}=\Psi^\enn(W_{\textnormal{Cox}})=nQ\rtimes W_0.
\end{equation*}
This is a Coxeter group with generators $s_0^\enn, \dots, s_{r-1}^\enn$.

Explicitly, for $v\in \mathbb{R}^r$, we have
\begin{equation*}
    s_0^\enn (v_1, \dots, v_r)=(v_r+n, v_2, \dots, v_{r-1}, v_1-n).
\end{equation*}
Identifying $W^\enn$ with the corresponding group of affine transformations as above and recalling (\ref{general reflection}) and (\ref{n reflection}), we see that for $v\in \mathbb{R}^r$,
\begin{equation*}
    s_{-n\theta+n^2 \delta }v=(s_\theta v)+n\theta=s_0^\enn v,
\end{equation*}
so
\begin{equation*}
    s_0^\enn=s_{-n\theta+n^2 \delta}.
\end{equation*}

This motivates the definition of the space
\begin{equation*}
    P_a^\enn=nP\oplus n^2\mathbb{Z}\delta\subseteq P_a.
\end{equation*}
and the isomorphism of abelian groups (but \emph{not} of lattices, since $(\cdot,\cdot)$ is not preserved)
\begin{equation*}
    \Psi^\enn: P_a\rightarrow P_a^\enn 
\end{equation*}
defined by
\begin{equation*}
    \Psi^\enn(\lambda+s\delta)=n\lambda+sn^2\delta.
\end{equation*}
With these definitions, and $\alpha_i^\enn$ defined for $0\leq i\leq r-1$ by 
\begin{equation*}
    \alpha_i^{(n)}=\Psi^\enn(\alpha_i)=\left\{
     \begin{array}{lr}
       n\alpha_i & : i\neq 0\\
       -n\theta + n^2 \delta & : i=0,
     \end{array}
   \right.
\end{equation*}
we have
\begin{equation*}
    s_i^\enn=s_{\alpha_i^\enn}.
\end{equation*}
for $0\leq i\leq r-1$. In addition, for $\lambda\in P$, 
\begin{equation*}
    \Psi^\enn(\tau(\lambda))=\tau(\Psi^\enn(\lambda)).
\end{equation*}

The affine root system for which $\alpha_0^\enn, \dots, \alpha_{r-1}^\enn$ are the simple roots is
\begin{equation*}
    \widetilde{\Phi}^\enn=\Psi^\enn(\widetilde{\Phi})=\{n\alpha+sn^2\delta: \alpha\in \Phi\cup\{0\}, s\in\mathbb{Z},\textnormal{ and }\alpha\neq 0\textnormal{ or }s\neq 0\}\subseteq P_a^\enn.
\end{equation*}
(Note that the simple roots $\alpha_i^\enn$ were called $b_i^\enn$ in \cite{ssv}.) 
The corresponding set of (real) positive affine roots is
\begin{equation*}
    \widetilde{\Phi}_+^\enn=\Psi^\enn(\widetilde{\Phi}_+)=n\Phi_+\cup \{n\alpha+sn^2\delta: s>0, \alpha\in \Phi\cup\{0\}\}.
\end{equation*}

\begin{remark}
We intentionally use the same notation for the maps
\[\Psi^\enn: W\rightarrow W^\enn\textnormal{ and }\Psi^\enn: P_a\rightarrow P_a^\enn,\]
since one is induced by the other. We will use the same notation in Proposition~\ref{psi hecke} for a related isomorphism of Hecke algebras. The context should always make it clear which map is being applied. 
\end{remark}

We would also like to record the following property, which is easy to check: for all $\widehat{\mu}\in P_a$, $v\in\mathbb{R}^r$, and $0\leq i\leq r-1$,
\begin{equation}\label{self adjoint}
    \langle s_i^\enn * \widehat{\mu},v\rangle =\langle \widehat{\mu},s_i^\enn v\rangle.
\end{equation}

Define
\begin{equation*}
    A^\enn=n(\mathcal{A}\cap \frac{1}{n}\mathbb{Z}^r)=\{(\lambda_1, \dots, \lambda_r)\in \mathbb{Z}^r: \lambda_1 \geq \lambda_2 \geq \dots \geq \lambda_r, \lambda_1-\lambda_r\leq n\}.
\end{equation*}

For $v\in \mathbb{R}^r$, we will use the notation
\begin{equation*}
    \Psi^\enn(v)=nv.
\end{equation*}
Then for $w\in W_{\textnormal{Cox}}$ and $v\in\mathbb{R}^r$, we observe that
\begin{equation}\label{psi compatibility}
    \Psi^\enn(w)\Psi^\enn(v)=\Psi^\enn(wv).
\end{equation}
This allows us to identify the action of $W_{\textnormal{Cox}}$ on $\frac{1}{n}\mathbb{Z}^r\subseteq \mathbb{R}^r$ with the action of $W_{\textnormal{Cox}}^\enn$ on $\mathbb{Z}^r$. Then Lemma~\ref{fundamental domain R} has the following consequence.
 
\begin{lemma}\label{lambda and w}
For any $\mu\in \mathbb{Z}^r$, there exist $w\in W_{\textnormal{Cox}}^\enn$ and a unique $\lambda\in A^\enn$ such that $w\lambda=\mu$. If we require $w$ to have the shortest possible length, then $w$ is unique. 
\end{lemma}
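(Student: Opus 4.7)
The plan is to reduce this to Lemma~\ref{fundamental domain R} by passing through the scaling isomorphism $\Psi^\enn$. Given $\mu\in\mathbb{Z}^r$, set $v=\tfrac{1}{n}\mu\in\tfrac{1}{n}\mathbb{Z}^r\subseteq\mathbb{R}^r$. Applying Lemma~\ref{fundamental domain R} to $v$, I obtain $w_0\in W_{\textnormal{Cox}}$ and a unique $v_+\in\overline{\mathcal{A}}$ with $w_0 v_+=v$, with $w_0$ also unique under the minimum-length requirement. Since $W_{\textnormal{Cox}}=Q\rtimes W_0$ preserves $\tfrac{1}{n}\mathbb{Z}^r$, we have $v_+\in\overline{\mathcal{A}}\cap\tfrac{1}{n}\mathbb{Z}^r$.

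I would then define $\lambda=nv_+=\Psi^\enn(v_+)$ and $w=\Psi^\enn(w_0)\in W_{\textnormal{Cox}}^\enn$. The explicit description of $\overline{\mathcal{A}}$ recorded just after (\ref{fundamental alcove}) shows that $n\overline{\mathcal{A}}\cap\mathbb{Z}^r$ is exactly $A^\enn$, so $\lambda\in A^\enn$. Using the compatibility (\ref{psi compatibility}) of the scaling map with the affine action,
\begin{equation*}
    w\lambda=\Psi^\enn(w_0)\Psi^\enn(v_+)=\Psi^\enn(w_0 v_+)=\Psi^\enn(v)=nv=\mu,
\end{equation*}
which establishes existence.

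For uniqueness of $\lambda$, suppose $\lambda'\in A^\enn$ and $w'\in W_{\textnormal{Cox}}^\enn$ with $w'\lambda'=\mu$. Setting $v_+'=\tfrac{1}{n}\lambda'\in\overline{\mathcal{A}}$ and $w_0'=(\Psi^\enn)^{-1}(w')\in W_{\textnormal{Cox}}$, the same computation via (\ref{psi compatibility}) gives $w_0' v_+'=v$, and the uniqueness clause of Lemma~\ref{fundamental domain R} forces $v_+'=v_+$, hence $\lambda'=\lambda$. For the uniqueness of the shortest $w$, I would invoke the fact that $\Psi^\enn:W_{\textnormal{Cox}}\to W_{\textnormal{Cox}}^\enn$ is an isomorphism of Coxeter groups carrying the generators $s_0,\dots,s_{r-1}$ bijectively onto $s_0^\enn,\dots,s_{r-1}^\enn$, so it preserves length. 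Thus any $w$ of minimal length with $w\lambda=\mu$ corresponds under $(\Psi^\enn)^{-1}$ to an element of minimal length with $w_0 v_+=v$, which is unique by Lemma~\ref{fundamental domain R}.

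There is no real obstacle here; the substantive content has already been done in Lemma~\ref{fundamental domain R}, and the only thing to verify carefully is that $\Psi^\enn$ respects all of the relevant structures (the action, the length function, and the identification $A^\enn=n\overline{\mathcal{A}}\cap\mathbb{Z}^r$). Of these, the length-preservation is the most likely to warrant a sentence of justification, since it relies specifically on $\Psi^\enn$ being an isomorphism of Coxeter systems rather than merely of groups.
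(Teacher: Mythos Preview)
Your argument is correct and is exactly the paper's approach: the paper simply remarks that (\ref{psi compatibility}) identifies the $W_{\textnormal{Cox}}$-action on $\tfrac{1}{n}\mathbb{Z}^r$ with the $W_{\textnormal{Cox}}^\enn$-action on $\mathbb{Z}^r$ and then invokes Lemma~\ref{fundamental domain R}, while you have spelled out the details of this reduction (including the length-preservation step) explicitly.
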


\subsection{Hecke algebras}

We now define two closely related Hecke algebras and study the representation given in \cite{ssv}. The first algebra is the object of study in \cite{ry} with a slightly enlarged center. Recall that we have fixed the positive integer $n$. 

\begin{definition}\label{daha}
The \emph{double affine Hecke algebra} $\mathbb{H}$ is the unital associative algebra over $\mathbb{K}^\enn$ generated by $q_n,\omega,\omega^{-1},T_0, T_1, \dots, T_{r-1},$ and $X^{\widehat{\mu}}$ for $\widehat{\mu}\in \zra$, with the following relations, where $0\leq i,j\leq r-1$ and the indices of the $T_i$ and $T_j$ are taken modulo $r$:
\begin{enumerate}
    \item (Braid relations.) \textnormal{If }$r\geq 3$, $T_i T_{i+1} T_i=T_{i+1} T_i T_{i+1}$ and, when $|i-j|>1$, $T_i T_j=T_j T_i$.\label{braid}
    \item (Hecke relations.) $(T_i-k)(T_i+k^{-1})=0$.
    \item $\omega\omega^{-1}=1=\omega^{-1}\omega$ and $\omega T_i=T_{i+1}\omega$.
    \item For $\widehat{\mu},\widehat{\lambda}\in\zra$, $X^{\widehat{\mu}} X^{\widehat{\lambda}}=X^{\widehat{\lambda}} X^{\widehat{\mu}}$.
    \item $q_n$ is central. We will write $q=q_n^n$.
    \item $X^\delta=q_n^n=q$.
    \item (Cross relations.) For $\widehat{\mu}=\mu+a\delta\in\zra$,
    \begin{equation*}
        T_i X^{\widehat{\mu}} -X^{s_i *{\widehat{\mu}} }T_i=(k-k^{-1})\left( \dfrac{X^{\widehat{\mu}}-X^{s_i*{\widehat{\mu}}}}{1-X^{\alpha_i}}\right)\textnormal{ and }\omega X^{\widehat{\mu}}=q^{-\mu_r}X^{s_1\cdots s_{r-1}{\widehat{\mu}}}\omega.
    \end{equation*}
\end{enumerate}
\end{definition}

\begin{remark}
Precisely, the differences between $\mathbb{H}$ and the double affine Hecke algebra in \cite{ry} are: the use of the notation $k$ instead of $t$ (we have $t=k^2$), the enlargement of the center by the $n$th root $q_n$ of $q$, and the use of the field $\mathbb{K}^\enn$ instead of $\mathbb{C}$. 
\end{remark}

For $w\in W^{(1)}_{\textnormal{Cox}}$ with reduced expression $w=s_{i_1}\cdots s_{i_\ell}$, define
\begin{equation*}
    T_w=T_{i_1}\cdots T_{i_\ell}.
\end{equation*}
By a standard argument, since the $T_i$ satisfy the braid relations (\ref{braid}), $T_w$ is independent of the choice of reduced expression. 


The next definition is the variant of the double affine Hecke algebra used in \cite{ssv}.

\begin{definition}\label{daha meta}
$\mathbb{H}^{(n)}$ is the unital associative algebra over $\mathbb{K}^\enn$ generated by $q$, $\omega^\enn$, $(\omega^\enn)^{-1}$, $T_0, T_1, \dots, T_{r-1},$ and $X^{\widehat{\mu}}$ for ${\widehat{\mu}}\in P_a^\enn$, with the following relations, where $0\leq i,j\leq r-1$ and the indices of $T_i$, $T_{i+1}$, and $T_j$ are taken modulo $r$:
\begin{enumerate}
    \item (Braid relations.) $T_i T_{i+1} T_i=T_{i+1} T_i T_{i+1}$ and $T_i T_j=T_j T_i$ if $|i-j|>1$.
    \item (Hecke relations.) $(T_i-k)(T_i+k^{-1})=0$.
    \item $\omega^\enn(\omega^\enn)^{-1}=1=(\omega^\enn)^{-1}\omega^\enn$ and $\omega^\enn T_i=T_{i+1}\omega^\enn$.
    \item For $\widehat{\mu},\widehat{\lambda}\in P_a^\enn$, $X^{\widehat{\mu}} X^{\widehat{\lambda}}=X^{\widehat{\lambda}} X^{\widehat{\mu}}$.
    \item $q$ is central. 
    \item $X^{n^2\delta}=q^{n}.$\label{X and q}
    \item (Cross relations.) For ${\widehat{\mu}}=\mu+a\delta \in P_a^\enn$,
    \begin{equation*}
        T_i X^{\widehat{\mu}} -X^{s_i*{\widehat{\mu}} }T_i=(k-k^{-1})\left( \dfrac{X^{\widehat{\mu}}-X^{s_i^\enn *{\widehat{\mu}}}}{1-X^{\alpha_i^{(n)}}}\right)\textnormal{ and }\omega^\enn X^{\widehat{\mu}}=q^{-\mu_r}X^{s_1\cdots s_{r-1}{\widehat{\mu}}}\omega^\enn.
    \end{equation*}
\end{enumerate}
\end{definition}


The following observation is straightforward but important. 

\begin{proposition}\label{psi hecke}
We have an isomorphism of $\mathbb{K}^\enn$-algebras $\Psi^\enn: \mathbb{H}\rightarrow \mathbb{H}^{(n)}$ defined by
\begin{align*}
    \Psi^\enn(T_i)&=T_i,
    \\\Psi^\enn(\omega)&=\omega^\enn,
    \\\Psi^\enn(X^\mu)&=X^{n \mu}\textnormal{ for }\mu\in \mathbb{Z}^r,
    \\\Psi^\enn(X^\delta)&=X^{n^2\delta},
    \\\Psi^\enn(q_n)&=q.
\end{align*}

\end{proposition}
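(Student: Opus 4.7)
The plan is to check that $\Psi^\enn$ respects each of the seven defining relations of $\mathbb{H}$, and then to exhibit a two-sided inverse. Because $\Psi^\enn$ acts as the identity on $T_0,\dots,T_{r-1}$ and sends $\omega \mapsto \omega^\enn$, the braid, Hecke, and $\omega$-relations (items 1--3) transport immediately. Commutativity of the $X^{\widehat{\mu}}$ (item 4) follows since $\Psi^\enn$ restricted to $P_a$ is a homomorphism of abelian groups. The centrality of $q_n$ combined with $X^\delta = q_n^n$ (items 5--6) becomes ``$q$ central with $X^{n^2\delta} = q^n$'' under $\Psi^\enn(q_n) = q$ and $\Psi^\enn(X^\delta) = X^{n^2\delta}$, which is precisely item 6 in $\mathbb{H}^\enn$.

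The main step is the $T_i$--$X$ cross relation
\[
T_i X^{\widehat{\mu}} - X^{s_i * \widehat{\mu}} T_i \;=\; (k-k^{-1})\,\frac{X^{\widehat{\mu}} - X^{s_i * \widehat{\mu}}}{1 - X^{\alpha_i}}.
\]
Applying $\Psi^\enn$ termwise produces the analogous relation in $\mathbb{H}^\enn$ (with $\alpha_i$ replaced by $\alpha_i^\enn$), provided the key compatibility
\[
\Psi^\enn(s_i * \widehat{\mu}) \;=\; s_i^\enn * \Psi^\enn(\widehat{\mu})
\]
holds for all $\widehat{\mu} \in P_a$ and $0 \leq i \leq r-1$. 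For $1 \leq i \leq r-1$ this is obvious, since $s_i = s_i^\enn$ acts trivially on $\delta$. The $i = 0$ case is the delicate one: a short computation using $s_0 = \tau(\theta)s_\theta$ versus $s_0^\enn = \tau(n\theta)s_\theta$ together with the explicit formula for the linear action on $P_a$ shows that for $\widehat{\mu} = \mu + a\delta$ both sides equal $n s_\theta \mu + n^2(a + (\mu,\theta))\delta$. This is exactly the point at which the scaling $\delta \mapsto n^2 \delta$ built into $\Psi^\enn$ is forced. The $\omega$ cross relation is easier, since $s_1 \cdots s_{r-1}$ fixes $\delta$. The rational expression on the right side causes no trouble: already in the group algebra of $P_a$, the numerator $X^{\widehat{\mu}} - X^{s_i * \widehat{\mu}}$ is divisible by $1 - X^{\alpha_i}$, and $\Psi^\enn$ sends this divisibility to the corresponding divisibility in $\mathbb{H}^\enn$.

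Finally, I would construct the inverse $\Phi \colon \mathbb{H}^\enn \to \mathbb{H}$ defined on generators by $T_i \mapsto T_i$, $\omega^\enn \mapsto \omega$, $q \mapsto q_n$, and $X^{n\mu + an^2\delta} \mapsto X^{\mu + a\delta}$ for $\mu \in P$ and $a \in \mathbb{Z}$. The same relation-by-relation check, performed in reverse, shows that $\Phi$ is well-defined; the one identity worth a second glance is that $X^{n^2\delta} = q^n$ in $\mathbb{H}^\enn$ is sent to $X^\delta = q_n^n$ in $\mathbb{H}$, which is item 6. That $\Phi \circ \Psi^\enn$ and $\Psi^\enn \circ \Phi$ agree with the identity on generators is then immediate from the definitions. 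The only nontrivial piece of the whole argument is the $i = 0$ compatibility above; everything else is bookkeeping.
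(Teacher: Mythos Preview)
Your proof is correct. The paper itself does not provide a proof of this proposition, simply calling it ``straightforward but important,'' so your relation-by-relation verification is exactly the kind of argument one would supply to fill in the details. You have correctly identified the one nontrivial point: the compatibility $\Psi^\enn(s_0 * \widehat{\mu}) = s_0^\enn * \Psi^\enn(\widehat{\mu})$, which is what forces the scaling $\delta \mapsto n^2\delta$ rather than $\delta \mapsto n\delta$, and your computation of both sides as $n s_\theta \mu + n^2(a + (\mu,\theta))\delta$ is correct.
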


\begin{remark}
In \cite{ssv}, the element of $\mathbb{H}^\enn$ called $q$ (for the sake of this remark, let us call it $q_{SSV}$) is an $n$th root of the element we call $q$. Only $n$th powers of $q_{SSV}$ actually appear in \cite{ssv}, so we choose to use the current notation instead in order to minimize assumptions and make the examples in Appendix~\ref{appendix: nonsymmetric} appear more uniform. To summarize, using $q_{RY}$ to denote the $q$-parameter used in \cite{ry}, we have
\[q=q_{RY}=q_n^n=X^\delta\]
in $\mathbb{H}$, and
\[q=q_{SSV}^n\textnormal{ and }q^n=X^{n^2 \delta}\]
in $\mathbb{H}^\enn$. We also have
\[\Psi^\enn(q_n)=q\textnormal{ and }\Psi^\enn(q)=q^n.\]
\end{remark}

\begin{remark}
%
The conditions on $\Psi^\enn(X^\mu)$ and $\Psi^\enn(X^\delta)$ in Proposition~\ref{psi hecke} can be written more succinctly as
\begin{equation*}
    \Psi^\enn(X^{\mu+s\delta})=X^{\Psi^\enn(\mu+s\delta)}.
\end{equation*} 
\end{remark}

For $1\leq i\leq r-1$, we will use the notation $T_i^\vee=T_i\in \mathbb{H}$. Also define
\begin{equation*}
    T_0^\vee=(X^\theta T_{s_\theta})^{-1}\in\mathbb{H}.
\end{equation*}
We use the same notation for the corresponding element of $\mathbb{H}^\enn$, $T_0^\vee=\Psi^\enn(T_0^\vee)$. 

Define the following elements of $\mathbb{H}$, for $1\leq i\leq r$:
\begin{equation}\label{explicit Y}
    Y_i=Y^{\epsilon_i}=T_{i-1}^{-1}\cdots T_{1}^{-1}\omega T_{r-1}\cdots T_{i},
\end{equation}
where $T_{i-1}^{-1}\cdots T_{1}^{-1}$ and $T_{r-1}\cdots T_{i}$ are taken to have decreasing indices. These elements commute and are invertible. We then define, for $\widehat{\mu}=\sum \mu_i \epsilon_i+s\delta\in\zra$, 
\begin{equation*}
    Y^{\widehat{\mu}}=q^{-s}Y_1^{\mu_1}\cdots Y_r^{\mu_r}.
\end{equation*}
These elements satisfy the Bernstein-Zelevinsky cross relations
\begin{equation*}
    T_i^\vee Y^{\widehat{\mu}} -Y^{s_i*{\widehat{\mu}} }T_i^\vee=(k-k^{-1})\left( \dfrac{Y^{\widehat{\mu}}-Y^{s_i*{\widehat{\mu}}}}{1-Y^{-\alpha_i}}\right)
\end{equation*}
for $0\leq i\leq r-1$ and $\widehat{\mu}\in\zra$. (See \cite{is} for further discussion on the element $T_0^\vee$ and its analog in more general double affine Hecke algebras.) 

In $\mathbb{H}^{(n)},$ we have the corresponding elements
\begin{equation}\label{BZ Y}
    Y^{n\epsilon_i}=\Psi^\enn(Y^{\epsilon_i})=T_{i-1}^{-1}\cdots T_{1}^{-1}\omega^\enn T_{r-1}\cdots T_{i}
\end{equation}
and, for $\widehat{\mu}=\sum \mu_i\epsilon_i+s\delta\in P_a$, 
\begin{equation*}
    Y^{\Psi^\enn(\widehat{\mu})}=\Psi^\enn(Y^{\widehat{\mu}})
    =q^{-sn}(Y^{n\epsilon_1})^{\mu_1}\cdots (Y^{n\epsilon_r})^{\mu_r}.
\end{equation*}

For $0\leq i\leq r-1$ and $\widehat{\mu}\in P_a^\enn$, these elements satisfy
\begin{equation*}
    T_i^\vee Y^{\widehat{\mu}} -Y^{s_i^\enn *{\widehat{\mu}} }T_i^\vee=(k-k^{-1})\left( \dfrac{Y^{\widehat{\mu}}-Y^{s_i^\enn *{\widehat{\mu}}}}{1-Y^{-\alpha_i^\enn}}\right).
\end{equation*}

Let $\mathbb{F}^\enn[x^{\pm 1}]$ denote the space of Laurent polynomials in the variables $x_1, \dots, x_r$ over $\mathbb{F}^\enn=\mathbb{K}^\enn(q)$. For $\mu\in\mathbb{Z}^r$, we will use the notation
\begin{equation}\label{x notation}
    x^\mu=x_1^{\mu_1}\cdots x_r^{\mu_r}.
\end{equation}

For $j\in \mathbb{Z}$, recall that $r_n(j)\in\{0,\dots,n-1\}$ is the residue of $j$ modulo $n$. Define 
\begin{equation*}
    t_n(j)=j-r_n(j)\in n\mathbb{Z}.
\end{equation*}

The next theorem was proven in \cite{ssv}. The case $n=1$ is the well-known basic representation of \cite{cherednikBook}. 
\begin{theorem}
The following formulas define a representation $\pn$ of $\hn$ on $\mathbb{F}^\enn[x^{\pm 1}]$:
\begin{align}
    \pn(T_i) x^\lambda&= (k-k^{-1})\dfrac{1-x^{-t_n( (\lambda,\alpha_i)) \alpha_i}}{1-x^{n\alpha_i}}x^\lambda+G^\enn_{(\lambda,\alpha_i)}x^{s_i\lambda} \hspace{.25 in}(i\neq 0),\label{Ti action}
    \\\pn(T_0) x^\lambda &=(k-k^{-1})\dfrac{1-q^{-t_n(-(\lambda,\theta))}x^{t_n( -(\lambda,\theta)) \theta}}{1-q^{n}x^{-n\theta}}x^\lambda+G^\enn_{-(\lambda,\theta)}q^{(\lambda,\theta)}x^{s_\theta\lambda} ,\label{T0 action}
    \\\pn(X^\mu) x^\lambda &=x^{\mu+\lambda},
    \\\pn(\omega^\enn) x^\lambda &= q^{-\lambda_r}x^{s_1\cdots s_{r-1}\lambda},\label{omega rep}
    \\\pn(q)x^\lambda &= q x^\lambda
\end{align}
where $0\leq i\leq r-1$, $\lambda\in\mathbb{Z}^r$, and $\mu\in n\mathbb{Z}^r.$
\end{theorem}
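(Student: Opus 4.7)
The plan is to verify each defining relation of $\mathbb{H}^{(n)}$ from Definition~\ref{daha meta} on the standard basis $\{x^\lambda:\lambda\in\mathbb{Z}^r\}$ of $\mathbb{F}^{(n)}[x^{\pm 1}]$. First I would dispose of the easy relations. The commutativity $X^{\widehat\mu}X^{\widehat\lambda}=X^{\widehat\lambda}X^{\widehat\mu}$, centrality of $q$, the relation $X^{n^2\delta}=q^n$, and the invertibility of $\omega^{(n)}$ are all immediate from the formulas, since $\pi^{(n)}(X^\mu)$ acts diagonally on monomials and $\pi^{(n)}(\omega^{(n)})$ is the composition of a cyclic coordinate rotation with a scalar factor, hence obviously invertible.

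Next I would verify the Hecke relations $(T_i-k)(T_i+k^{-1})=0$ by computing $\pi^{(n)}(T_i)^2 x^\lambda$ directly. The off-diagonal terms collapse using the identity $G^{(n)}_j G^{(n)}_{-j}=1$ together with $(s_i\lambda,\alpha_i)=-(\lambda,\alpha_i)$, and the diagonal terms combine via elementary identities for $t_n$ and $r_n$; the $i=0$ case is analogous once the appropriate powers of $q$ are tracked. The cross relations between $T_i$ and $X^{\widehat\mu}$ are then checked by expansion on $x^\lambda$: the rational function $(k-k^{-1})(1-x^{-t_n((\lambda,\alpha_i))\alpha_i})/(1-x^{n\alpha_i})$ in front of $x^\lambda$ is engineered precisely so that the diagonal parts match the right-hand side $(X^{\widehat\mu}-X^{s_i^{(n)}*\widehat\mu})/(1-X^{\alpha_i^{(n)}})$, while the off-diagonal $G^{(n)}$-term commutes appropriately with $X^{\widehat\mu}$ via $s_i$. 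The two $\omega^{(n)}$ relations $\omega^{(n)}T_i=T_{i+1}\omega^{(n)}$ and $\omega^{(n)}X^{\widehat\mu}=q^{-\mu_r}X^{s_1\cdots s_{r-1}\widehat\mu}\omega^{(n)}$ follow from (\ref{omega rep}) together with the fact that the cyclic rotation $s_1\cdots s_{r-1}$ sends $\alpha_i$ to $\alpha_{i+1}$ (indices modulo $r$).

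The main obstacle is the braid relation $T_iT_{i+1}T_i=T_{i+1}T_iT_{i+1}$; the far-commutation $T_iT_j=T_jT_i$ for $|i-j|>1$ is immediate because those operators only involve disjoint pairs of coordinates. The direct approach is to apply both sides to $x^\lambda$, obtaining a sum of terms indexed by subsets of the three simple reflections, and match them using the periodicity $G^{(n)}_j=G^{(n)}_{r_n(j)}$, the inversion $G^{(n)}_j G^{(n)}_{-j}=1$, and compatibility identities for $t_n$ on the three relevant pairings $(\lambda,\alpha_i)$, $(\lambda,\alpha_{i+1})$, $(\lambda,\alpha_i+\alpha_{i+1})$. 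A cleaner route I would try first is to realize $\pi^{(n)}$, restricted to the operators $T_1,\dots,T_{r-1}$, as a conjugate of the Chinta--Gunnells representation from \cite{cg1,cg2,cgp}, where the finite braid relations are already known via a Gauss sum identity; one then only needs to handle the affine generator $T_0$ separately, which after using $\omega^{(n)}T_{r-1}\omega^{(n)}{}^{-1}$-type conjugations reduces to the previously verified cases. In either approach, the verification splits according to the residue of $(\lambda,\alpha_i)$ modulo $n$, and this case analysis is the combinatorial heart of the metaplectic modification.
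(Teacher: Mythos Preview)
The paper does not give its own proof of this theorem: it is stated with the attribution ``The next theorem was proven in \cite{ssv}'' and no argument is provided. So there is nothing in the paper to compare your proposal against.

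That said, your outline is sound and is essentially the strategy carried out in \cite{ssv}. In particular, your ``cleaner route'' for the braid relations---realizing the finite $T_i$ as a twist of the Chinta--Gunnells action and invoking the fact that the latter is a genuine $W_0$-action---is exactly the key idea there (cf.\ also the discussion around (\ref{CG property}) in the present paper, which records the relevant relationship between $\pi^{(n)}(T_i)$ and a conjugate $\overline{\sigma}$ of the Chinta--Gunnells action). The reduction of the $T_0$ case to the finite case via conjugation by $\omega^{(n)}$ is likewise standard. Your direct-verification alternative for the braid relation would also work but is considerably messier.
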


\begin{remark}
Extend the notation (\ref{x notation}) by defining, for $\widehat{\mu}=\mu+s \delta\in \mathbb{Z}^r\oplus \mathbb{Z} n \delta$, 
\begin{equation*}
    x^{\widehat{\mu}}=q^{s/n} x_1^{\mu_1}\cdots x_r^{\mu_r}.
\end{equation*}
(Note the relationship with relation~\ref{X and q} of Definition~\ref{daha meta}.)Then equations (\ref{Ti action}) and (\ref{T0 action}) may be more compactly expressed as
\begin{equation*}
    \pn(T_i) x^\lambda = (k-k^{-1})\dfrac{1-x^{-t_n( (\lambda,\alpha_i)) \frac{1}{n}\alpha_i^\enn}}{1-x^{\alpha_i^\enn}}x^\lambda+G_{(\lambda,\alpha_i)}x^{s_i^\enn *\lambda} 
\end{equation*}
for $0\leq i\leq r-1$. Further, we may rewrite (\ref{omega rep}) as 
\begin{equation*}
    \pi^\enn(\omega^\enn)x^\lambda=x^{\omega^\enn * \lambda},
\end{equation*}
recalling the notation (\ref{omega def}). 
\end{remark}

Define $\sigma: \mathbb{Z}\rightarrow \mathbb{F}^\enn$ by
\begin{equation}\label{sigma def}
    \sigma(a)=\left\{
     \begin{array}{lr}
       k^{-1} & : a\in n\mathbb{Z}_{>0}\\
       G^\enn_{a} & : a\in\mathbb{Z}\setminus n\mathbb{Z}_{>0}
     \end{array}
   \right.
\end{equation}
for $a\in\mathbb{Z}$. (Although $\sigma$ depends on the positive integer $n$, we suppress it from the notation. We will not compare values of $\sigma$ for different values of $n$.) 
Note that $\sigma$ satisfies $\sigma(0)=G^\enn_0=k$ and 
\begin{equation}\label{sigma basic property}
    \sigma(a)\sigma(-a)=1
\end{equation}
for all $a\in\mathbb{Z}\setminus\{0\}$. 

From the theorem, we immediately have
\begin{corollary}\label{Ti on good monomials}
Let $\mu\in\mathbb{Z}^r$ and let $1\leq i\leq r-1$. If $0\leq (\mu,\alpha_i)\leq n$, then 
\begin{equation*}
    \pn(T_i) x^\mu=\sigma((\mu,\alpha_i))x^{s_i\mu}.
\end{equation*}
If $-n\leq (\mu,\alpha_i)\leq 0$, then 
\begin{equation*}
    \pn(T_i^{-1})x^\mu=\sigma((s_i \mu,\alpha_i))^{-1}x^{s_i\mu}.
\end{equation*} 
\end{corollary}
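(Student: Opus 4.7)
The plan is to proceed by direct case analysis using the explicit formula (\ref{Ti action}) for $\pi^\enn(T_i)$ and the definition (\ref{sigma def}) of $\sigma$. Setting $a=(\mu,\alpha_i)$, the formula reads
\[
\pi^\enn(T_i)x^\mu=(k-k^{-1})\,\frac{1-x^{-t_n(a)\alpha_i}}{1-x^{n\alpha_i}}\,x^\mu+G^\enn_{a}\,x^{s_i\mu},
\]
so the proof reduces to evaluating the rational prefactor and comparing the resulting coefficient to $\sigma(a)$.

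For the first identity, I would split into the subcases $0\le a<n$ and $a=n$. In the former, $t_n(a)=0$, so the rational prefactor vanishes identically, and since $a\in\mathbb{Z}\setminus n\mathbb{Z}_{>0}$ we have $G^\enn_a=\sigma(a)$ by definition of $\sigma$ (covering both $a=0$, where this reads $G^\enn_0=k=\sigma(0)$, and $1\le a\le n-1$). In the latter, $t_n(n)=n$, so the prefactor becomes $(1-x^{-n\alpha_i})/(1-x^{n\alpha_i})=-x^{-n\alpha_i}$, and since $s_i\mu=\mu-n\alpha_i$ the first term contributes $-(k-k^{-1})x^{s_i\mu}$. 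Combining with $G^\enn_n=G^\enn_{r_n(n)}=G^\enn_0=k$ gives $\pi^\enn(T_i)x^\mu=(-(k-k^{-1})+k)x^{s_i\mu}=k^{-1}x^{s_i\mu}$, matching $\sigma(n)=k^{-1}$ since $n\in n\mathbb{Z}_{>0}$.

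For the second identity, I would use the Hecke relation $(T_i-k)(T_i+k^{-1})=0$ to write $T_i^{-1}=T_i-(k-k^{-1})$ and evaluate $\pi^\enn(T_i)x^\mu$ for $-n\le a\le 0$. When $a=0$ the first subcase above gives $\pi^\enn(T_i)x^\mu=kx^\mu$, so $\pi^\enn(T_i^{-1})x^\mu=k^{-1}x^\mu$, which is $\sigma(0)^{-1}x^{s_i\mu}=\sigma((s_i\mu,\alpha_i))^{-1}x^{s_i\mu}$. When $-n\le a\le -1$, one checks directly from $r_n(a)=a+n$ (or $r_n(-n)=0$) that $t_n(a)=-n$, so the prefactor becomes $(1-x^{n\alpha_i})/(1-x^{n\alpha_i})=1$ and
\[
\pi^\enn(T_i)x^\mu=(k-k^{-1})x^\mu+G^\enn_{a}x^{s_i\mu}.
\]
Subtracting $(k-k^{-1})x^\mu$ yields $\pi^\enn(T_i^{-1})x^\mu=G^\enn_a x^{s_i\mu}$. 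Since $(s_i\mu,\alpha_i)=-a\in\{1,\dots,n\}$, the definition of $\sigma$ and the relation $G^\enn_aG^\enn_{-a}=1$ (with the boundary case $G^\enn_{-n}=k$) give $G^\enn_a=\sigma(-a)^{-1}$, completing the identity.

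The main obstacle, and it is genuinely minor, is bookkeeping at the boundary values $a=0,\pm n$ where the piecewise definition of $\sigma$ switches between $G^\enn_a$ and $k^{\pm 1}$; once those are handled consistently with the $n$-periodicity of $G^\enn_\bullet$, the rest is a one-line substitution into the explicit formula.
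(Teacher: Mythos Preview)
Your proof is correct and is exactly the direct verification the paper has in mind when it says the corollary follows ``immediately'' from the explicit formula for $\pi^\enn(T_i)$. The case split on $a=(\mu,\alpha_i)$, the computation of $t_n(a)$ in each range, and the use of $T_i^{-1}=T_i-(k-k^{-1})$ from the Hecke relation are all handled correctly, including the boundary cases $a\in\{0,\pm n\}$.
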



\subsection{The scalars \texorpdfstring{$\gamma(\widehat{\mu};\lambda)$}{gamma}}

For $\widehat{\mu}=\mu+sn^2\delta\in P_a^\enn$ and $\lambda\in\mathbb{Z}^r$, define the scalars
\begin{equation*}
    \gamma(\widehat{\mu};\lambda)=q^{-sn} \gamma(\mu;\lambda)=
    q^{-sn-(\mu,\lambda)/n}\prod_{\alpha\in\Phi_+}\left(\sigma((\lambda,\alpha))\right)^{(\mu,\alpha)/n},
\end{equation*}
where $\sigma$ is as in (\ref{sigma def}). 
We suppress the positive integer $n$ from this notation, but the context should be clear because we will have $\widehat{\mu}\in P_a^\enn$. 

Recalling the notation (\ref{angle bracket n}), we observe that
\begin{equation}\label{gamma clear q}
    \gamma(\widehat{\mu};\lambda)=q^{-\langle \widehat{\mu},\lambda \rangle/n}\prod_{\alpha\in\Phi_+}\sigma((\lambda,\alpha))^{(\mu,\alpha)/n}.
\end{equation}
We also note that
\begin{equation}\label{gamma inverse}
    \gamma(-\widehat{\mu};\lambda)=\gamma(\widehat{\mu};\lambda)^{-1}.
\end{equation}

Given $w\in W^\enn$ and a reduced expression $w=s_{i_1}^\enn\cdots s_{i_\ell}^\enn$, define the roots $\beta_1^\enn,\dots,\beta_\ell^\enn$ by
\begin{equation}\label{betas}
    \beta_j^\enn=s_{i_\ell}^\enn s_{i_{\ell-1}}^\enn\cdots s_{i_{j+1}}^\enn\alpha_{i_j}^\enn\in\widetilde{\Phi}_+^\enn.
\end{equation}
We now prove several technical but important lemmas involving $\gamma(\widehat{\mu};\lambda)$ and $\beta_j^\enn$. 

\begin{lemma}\label{gammas equal}
For $\widehat{\mu}\in P_a^\enn$, $\lambda\in\mathbb{Z}^r$, and $0\leq i\leq r-1$, whenever $s_i^\enn \lambda\neq \lambda$ we have
\begin{equation*}
    \gamma(s_i^\enn *\widehat{\mu};\lambda)=\gamma(\widehat{\mu};s_i^\enn \lambda).
\end{equation*}
Note that the action of $s_i^\enn$ on the left-hand side is the (linear) action on $P_a^\enn$, while the action of $s_i^\enn$ on the right-hand side is the (affine) action on $\mathbb{Z}^r$. This distinction is only relevant for $i=0$.
\end{lemma}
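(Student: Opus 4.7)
The approach is to split the defining formula
\[\gamma(\widehat{\mu};\lambda) = q^{-\langle\widehat{\mu},\lambda\rangle/n}\prod_{\alpha\in\Phi_+}\sigma((\lambda,\alpha))^{(\mu,\alpha)/n}\]
into its $q$-power and its $\sigma$-product. The $q$-factors match immediately by the adjointness (\ref{self adjoint}): $\langle s_i^\enn * \widehat{\mu},\lambda\rangle = \langle\widehat{\mu}, s_i^\enn\lambda\rangle$. Thus it suffices to match the $\sigma$-products, and the argument naturally splits into the cases $1\leq i\leq r-1$ and $i=0$.

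For $1\leq i\leq r-1$, the linear action $s_i^\enn = s_i$ fixes the $\delta$-coefficient and sends the finite part $\mu$ to $s_i\mu$, while the affine action of $s_i$ on $\lambda$ is the ordinary linear reflection. Substituting $\beta = s_i\alpha$ converts the LHS product over $\Phi_+$ into a product over $s_i\Phi_+ = (\Phi_+\setminus\{\alpha_i\})\cup\{-\alpha_i\}$. The factors indexed by $\Phi_+\setminus\{\alpha_i\}$ match the RHS verbatim, and the two factors at $\pm\alpha_i$ collapse via $\sigma(a)\sigma(-a)=1$, which is legal because the hypothesis $s_i\lambda\neq\lambda$ forces $(s_i\lambda,\alpha_i) = -(\lambda,\alpha_i)\neq 0$.

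For $i=0$, one computes $s_0^\enn * \widehat{\mu} = s_\theta\mu + (\cdots)\delta$ and $s_0^\enn\lambda = s_\theta\lambda + n\theta$. The analogous substitution $\beta = s_\theta\alpha$ converts the LHS into a product over $s_\theta\Phi_+$. I then partition $\Phi_+$ by $(\alpha,\theta)\in\{0,1,2\}$: when $(\alpha,\theta)=0$ the $n\theta$-shift is invisible and $s_\theta$ fixes $\alpha$, so those factors match directly. For $(\alpha,\theta)=1$ (where $s_\theta$ negates $\alpha$), matching reduces to the identity $\sigma(-a)\sigma(a+n)=1$ with $a=(s_\theta\lambda,\alpha)$. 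For $\alpha=\theta$, using $(\theta,\theta)=2$, matching reduces to $\sigma(b)\sigma(-b+2n)=1$ with $b=(\lambda,\theta)$; the single obstructing value $b=n$ is exactly the case $s_0^\enn\lambda = \lambda$, which is excluded by hypothesis.

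The main obstacle is verifying these two boundary identities, since $\sigma$ is not strictly $n$-periodic: one has $\sigma(a+n)=\sigma(a)$ for $a\neq 0$, but $\sigma(0)=k$ while $\sigma(n)=k^{-1}$. I plan to handle them by a short case analysis on the sign and $n$-divisibility of $a$ and $b$, leveraging $\sigma(a)\sigma(-a)=1$ for $a\neq 0$, the $n$-periodicity of $G_j^\enn$ combined with $G_j^\enn G_{-j}^\enn = 1$, and the complementarity $\sigma(0)\sigma(n) = k\cdot k^{-1} = 1$ to absorb the exceptional values near $0$, $\pm n$, and $2n$.
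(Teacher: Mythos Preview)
Your proof is correct and follows essentially the same route as the paper's: the paper proves the equivalent reformulation $\gamma(s_i^\enn * \widehat{\mu}; s_i^\enn\lambda) = \gamma(\widehat{\mu};\lambda)$, matches terms via $\beta = \pm s_\theta\alpha$, and reduces the $\alpha=\theta$ case to the same identity $\sigma(a)\sigma(2n-a)=1$ with the excluded value $a=n$. One small slip in your description: for $(\alpha,\theta)=1$ the reflection $s_\theta$ does not send $\alpha$ to $-\alpha$ but to $\alpha-\theta$, the negative of the \emph{partner} positive root; your reduction to $\sigma(-a)\sigma(a+n)=1$ is nonetheless correct once the pairing is stated this way.
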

\begin{proof}
We will show that $\gamma(s_i^\enn * \widehat{\mu};s_i^\enn \lambda)=\gamma(\widehat{\mu};\lambda)$. Write $\widehat{\mu}=\mu+sn^2\delta$. 

First consider the case $i\neq 0$. We have
\begin{equation*}
    \gamma(s_i \widehat{\mu};s_i \lambda)
    = q^{-\langle s_i\widehat{\mu},s_i\lambda \rangle/n}\prod_{\alpha\in\Phi_+}\sigma((s_i\lambda,\alpha))^{(s_i\mu,\alpha)/n}
    =q^{-\langle \widehat{\mu},\lambda \rangle/n}\prod_{\alpha\in\Phi_+}\sigma((\lambda,s_i \alpha))^{(\mu,s_i\alpha)/n}.
\end{equation*}
Since $s_i\alpha_i=-\alpha_i$ and $s_i$ permutes the other elements of $\Phi_+$, we see that 
\begin{equation}\label{si gamma quotient}
    \dfrac{\gamma(s_i \widehat{\mu};s_i \lambda)}{\gamma( \widehat{\mu}; \lambda)}=\dfrac{\sigma((\lambda,-\alpha_i))^{(\mu,-\alpha_i)}}{\sigma((\lambda,\alpha_i))^{(\mu,\alpha_i)}}=(\sigma(\lambda_{i+1}-\lambda_i)\sigma(\lambda_i-\lambda_{i+1}))^{(\mu,-\alpha_i)}.
\end{equation}
Since $s_i\lambda\neq \lambda$, we have $\lambda_i-\lambda_{i+1}\neq 0$, so by (\ref{sigma basic property}), (\ref{si gamma quotient}) is equal to $1$. 

Now consider the case $i=0$. Note that
\begin{equation*}
    s_0^\enn *\widehat{\mu}=s_\theta \mu+(sn^2+(\mu_1-\mu_r)n)\delta.
\end{equation*}
We have
\begin{align}
    \gamma(s_0^\enn * \widehat{\mu};s_0^\enn \lambda)
    &=q^{-\langle s_0^\enn\widehat{\mu},s_0^\enn\lambda \rangle/n}
    \prod_{\alpha\in\Phi_+}\sigma((s_0^\enn\lambda,\alpha))^{(s_\theta \mu,\alpha)/n}
    \\&=q^{-\langle \widehat{\mu}, \lambda \rangle/n}\prod_{\alpha\in\Phi_+}\sigma((s_0^\enn\lambda,\alpha))^{(s_\theta \mu,\alpha)/n} \label{s0 gamma}
\end{align}
Write
\begin{equation}\label{unmarked gamma}
    \gamma( \widehat{\mu};\lambda)=q^{-\langle \widehat{\mu}, \lambda \rangle/n}
    \prod_{\beta\in\Phi_+}\sigma((\lambda,\beta))^{(\mu,\beta)/n}.
\end{equation}
We claim that for all $\alpha\in\Phi_+$, the term in (\ref{s0 gamma}) corresponding to $\alpha$ is equal to the term in (\ref{unmarked gamma}) corresponding to $\beta=\pm s_\theta \alpha$ (whichever is in $\Phi_+$). We leave this for the reader to check in most cases: we will prove it only for the most complicated case, $\alpha=\theta$. We have
\begin{equation*}
    \sigma(s_0^\enn \lambda,\theta))^{(s_\theta\mu,\theta)/n}
    =\sigma((\lambda_r+n)-(\lambda_1-n))^{(\mu,s_\theta \theta)/n}
    =\sigma(2n-(\lambda_1-\lambda_r))^{-(\mu,\theta)/n}.
\end{equation*}
Then we must only show, letting $a=\lambda_1-\lambda_r$, that 
\begin{equation*}
    \sigma(2n-a)\sigma(a)=1.
\end{equation*}
If $a$ is not a multiple of $n$, we get 
\begin{equation*}
    \sigma(2n-a)\sigma(a)= G_{2n-a} G_a =1.
\end{equation*}
If $a$ is any multiple of $n$ other than $n$, then one of $\{a,2n-a\}$ is positive and the other is not, so 
\begin{equation*}
    \sigma(2n-a)\sigma(a)=kk^{-1}=1.
\end{equation*}
Finally, we observe that because $s_0^\enn \lambda\neq n$, we must have
\begin{equation*}
    a=\lambda_1-\lambda_r\neq n,
\end{equation*}
so the result holds. 
\end{proof}

\begin{lemma}\label{gamma not zero}
\begin{enumerate}
    \item Let $\lambda\in A^\enn$ and $\widehat{\alpha}=n(\epsilon_i-\epsilon_j)+sn^2\delta\in\widetilde{\Phi}^\enn$, where $i<j$. The power of $q$ in $\gamma(\widehat{\alpha},\lambda)$ is zero if and only if $s=0$ and $\lambda_i=\lambda_j$, or $s=-1$ and $\lambda_i-\lambda_j=n$. 
    \item For all $\widehat{\alpha}\in\widetilde{\Phi}^\enn$ and $\lambda\in \mathbb{Z}^r$, $\gamma(\widehat{\alpha}; \lambda)\neq 1$. 
\end{enumerate}
\end{lemma}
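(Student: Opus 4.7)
My plan settles (1) with a one-line computation and tackles (2) via a case analysis on the $\sigma$-product.

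For (1), by formula (\ref{gamma clear q}) the exponent of $q$ in $\gamma(\widehat{\alpha};\lambda)$ equals $-\langle\widehat{\alpha},\lambda\rangle/n$. Substituting $\widehat{\alpha} = n(\epsilon_i - \epsilon_j) + sn^2\delta$ yields $-(\lambda_i - \lambda_j) - sn$, which vanishes iff $\lambda_i - \lambda_j = -sn$. Since $\lambda \in A^\enn$ and $i < j$, we have $0 \leq \lambda_i - \lambda_j \leq \lambda_1 - \lambda_r \leq n$, so the only integer solutions are $s = 0$ with $\lambda_i = \lambda_j$, and $s = -1$ with $\lambda_i - \lambda_j = n$.

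For (2), first use (\ref{gamma inverse}) to reduce to $\widehat{\alpha} = n(\epsilon_i - \epsilon_j) + sn^2\delta$ with $1 \leq i < j \leq r$ and $s \in \mathbb{Z}$. Because $q$ is transcendental over $\mathbb{K}^\enn$, if the exponent of $q$ in $\gamma(\widehat{\alpha};\lambda)$ is nonzero then $\gamma(\widehat{\alpha};\lambda) \neq 1$ immediately. Otherwise $\lambda_i - \lambda_j = -sn$, and the task is to show the remaining scalar factor is not $1$ in $\mathbb{K}^\enn$.

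Expanding the $\sigma$-product via the inner products $(\epsilon_i - \epsilon_j, \beta)$ for $\beta \in \Phi_+$ and grouping the off-diagonal contributions by the ``other'' index yields a central factor $\sigma(\lambda_i - \lambda_j)^2 = \sigma(-sn)^2$ together with pairs of $\sigma$ values whose arguments differ by exactly $\pm sn$, hence share a residue modulo $n$. When the common residue is nonzero, both $\sigma$ values are $G^\enn$'s related by $G^\enn_a G^\enn_{-a} = 1$, so the pair collapses to $1$. When the common residue is $0$ (both arguments in $n\mathbb{Z}$), a short case analysis based on the positivity of each argument shows the pair contributes either $1$ or $k^{\pm 2}$, with the sign uniformly matching that of the central factor, which equals $k^2$ when $s \geq 0$ and $k^{-2}$ when $s < 0$. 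Thus the total $\sigma$-product equals $k^{\pm 2m}$ for some $m \geq 1$, which is not $1$ since $k$ is transcendental in $\mathbb{K}^\enn$.

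The main obstacle is the uniformity-of-sign claim: verifying that in each of the three families of pairs (indexed by $k < i$, $i < k < j$, and $k > j$), under every subcase based on whether the arguments lie in $n\mathbb{Z}_{>0}$, are $0$, or lie in $n\mathbb{Z}_{<0}$, the $k$-exponent either vanishes or matches the sign of $s$. Once this uniformity is checked by exhaustion, the conclusion follows from the transcendence of $k$.
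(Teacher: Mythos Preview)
Your proof is correct, and Part~(1) matches the paper exactly. For Part~(2), however, your route differs from the paper's in one key respect: you work directly with an arbitrary $\lambda\in\mathbb{Z}^r$, whereas the paper first invokes Lemma~\ref{gammas equal} to reduce to $\lambda\in A^\enn$, and then uses Part~(1) to pin down just two concrete cases, $(s,\lambda_i-\lambda_j)\in\{(0,0),(-1,n)\}$. In the paper's first case, dominance of $\lambda$ forces $\lambda_b=\lambda_i=\lambda_j$ for all $i<b<j$, so every pair contributes $\sigma(0)^2=k^2$ and the product collapses immediately to $k^{2(j-i)}$; the second case is similar.

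Your approach avoids the dependence on Lemma~\ref{gammas equal} and handles all $\lambda$ uniformly, at the cost of the sign-exhaustion you describe. That exhaustion does go through: for each of the three index ranges, writing the two arguments as $pn$ and $(p\mp s)n$ (or $un$ and $-(s+u)n$ in the middle range) and splitting on the signs of these integers, the pair contributes $1$ whenever both integers are on the same side of $0$ and $k^{\pm 2}$ otherwise, with the sign forced to agree with that of the central factor $\sigma(-sn)^2$. So your plan is sound. The paper's reduction buys brevity and concreteness; your direct argument buys self-containment and shows explicitly that the conclusion is really a statement about how $\sigma$ interacts with shifts by $sn$, independent of dominance.
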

\begin{proof}
Part 1 follows from (\ref{gamma clear q}) and the definition of $A^\enn$. For Part 2, note that by Lemma~\ref{gammas equal}, it suffices to consider the case where $\lambda\in A^\enn$. By (\ref{gamma inverse}), we may restrict to $\widehat{\alpha}$ as in Part 1 of this lemma. Further, by Part 1 of this lemma, we only need to consider two cases for $\widehat{\alpha}$: either $s=0$ and $\lambda_i=\lambda_j$, or $s=-1$ and $\lambda_i-\lambda_j=n$. We will sketch the proof in the first case, leaving the second (similar) case to the reader. 

Suppose that we are in the case where $s=0$ and $\lambda_i=\lambda_j$. Let $v=\lambda_i=\lambda_j$ be the common value. Write
\begin{equation*}
    \gamma(\widehat{\alpha};\lambda)=
    \prod_{\beta\in\Phi_+}\sigma((\lambda,\beta))^{(\alpha,\beta)/n}.
\end{equation*}
Only the terms with $(\alpha,\beta)\neq 0$ are relevant, so we must only consider $\beta=\epsilon_c-\epsilon_d$ where at least one of $c$ or $d$ is in $\{i,j\}$. 
For all $b<i$ (respectively $b>j$), the terms for $\beta=\epsilon_b-\epsilon_i$ (resp. $\beta=\epsilon_i-\epsilon_b$) and $\beta=\epsilon_b-\epsilon_j$ (resp. $\beta=\epsilon_j-\epsilon_b$) will cancel. Since $\lambda\in A^\enn$, if $i\leq b\leq j$, then $\lambda_b=v$. 
Then we have
\begin{equation*}
        \gamma(\widehat{\alpha};\lambda)=
        \sigma(0)^2\prod_{i<b<j}\sigma(0)^{(\epsilon_i-\epsilon_j,\epsilon_i-\epsilon_b)}\sigma(0)^{(\epsilon_i-\epsilon_j,\epsilon_b-\epsilon_j)}
        =\sigma(0)^{2(j-i)}=k^{2(j-i)}\neq 1.
\end{equation*}
\end{proof}

\begin{lemma}\label{q power}
Let $\lambda\in A^\enn$ and $\widehat{\alpha}\in \widetilde{\Phi}_+^\enn$.
\begin{enumerate}
    \item The power of $q$ in $\gamma({-\widehat{\alpha}};\lambda)$ is nonnegative. If $\langle \widehat{\alpha},\lambda\rangle\neq 0$, then the power of $q$ in $\gamma({-\widehat{\alpha}};\lambda)$ is strictly positive. 
    \item Let $w\in W_{\textnormal{Cox}}^\enn$, and let $w=s_{i_1}\cdots s_{i_\ell}$ be a reduced expression for $w$. Let $\beta_j^\enn$ be as defined in \eqref{betas}. For $1\leq j\leq \ell$, $\langle \beta_j^\enn,\lambda\rangle \neq 0$. In particular, the power of $q$ in $\gamma(-\beta_j^\enn;\lambda)$ is strictly positive. 
\end{enumerate}
\end{lemma}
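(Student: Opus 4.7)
The plan is to read off the exponent of $q$ in $\gamma(-\widehat{\alpha};\lambda)$ directly from the explicit form \eqref{gamma clear q}: it is precisely $\langle \widehat{\alpha},\lambda\rangle/n$. Both parts then reduce to controlling the pairing $\langle \widehat{\alpha},\lambda\rangle$, which for $\lambda\in\mathbb{Z}^r$ and $\widehat{\alpha}\in P_a^\enn$ is an integer multiple of $n$, so the implication ``nonnegative and nonzero $\Rightarrow$ strictly positive'' is automatic and only the sign/vanishing of $\langle \widehat{\alpha},\lambda\rangle$ has to be controlled.

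For Part 1, I would split $\widehat{\alpha}\in\widetilde{\Phi}^\enn_+$ according to the two cases in its definition: either $\widehat{\alpha}=n\alpha$ with $\alpha\in\Phi_+$, or $\widehat{\alpha}=n\alpha+sn^2\delta$ with $s\geq 1$ and $\alpha\in\Phi\cup\{0\}$. The defining conditions of $A^\enn$ (namely $\lambda_1\geq\cdots\geq\lambda_r$ and $\lambda_1-\lambda_r\leq n$) give $(\alpha,\lambda)\geq 0$ whenever $\alpha\in\Phi_+$ and $|(\alpha,\lambda)|\leq n$ for every $\alpha\in\Phi$. Substituting into $\langle \widehat{\alpha},\lambda\rangle=n(\alpha,\lambda)+sn^2$, the case $s=0$ gives $n(\alpha,\lambda)\geq 0$, while the case $s\geq 1$ gives $\langle \widehat{\alpha},\lambda\rangle\geq -n^2+sn^2\geq 0$.

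For Part 2, the strategy is geometric: $\langle \beta_j^\enn,\lambda\rangle=0$ is equivalent to $\lambda$ lying on the reflecting hyperplane $\mathfrak{h}^{\beta_j^\enn}$, i.e., to the affine reflection $s_{\beta_j^\enn}$ fixing $\lambda$. Iterating the conjugation identity $s_i^\enn s_{\widehat{\alpha}} s_i^\enn = s_{s_i^\enn*\widehat{\alpha}}$ (cf.\ Remark \ref{alcove walk explanation}) yields
\[s_{\beta_j^\enn}=s_{i_\ell}^\enn\cdots s_{i_{j+1}}^\enn s_{i_j}^\enn s_{i_{j+1}}^\enn\cdots s_{i_\ell}^\enn,\]
and the telescoping cancellation of adjacent identical reflections gives
\[w\, s_{\beta_j^\enn}=s_{i_1}^\enn\cdots s_{i_{j-1}}^\enn s_{i_{j+1}}^\enn\cdots s_{i_\ell}^\enn,\]
an element of $W_{\textnormal{Cox}}^\enn$ of length at most $\ell-1$. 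If $s_{\beta_j^\enn}$ fixed $\lambda$, this shorter element would also carry $\lambda$ to $w\lambda$, contradicting the minimality of $\ell(w)$ furnished by Lemma \ref{lambda and w}. Therefore $\langle \beta_j^\enn,\lambda\rangle\neq 0$, and Part 1 promotes this to strict positivity of the $q$-exponent.

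The main technical subtlety is the case in Part 1 where $\widehat{\alpha}$ has a non-positive finite part (e.g.\ $\widehat{\alpha}=n\alpha+sn^2\delta$ with $\alpha\in\Phi_-\cup\{0\}$ and $s\geq 1$): here the bound $\lambda_1-\lambda_r\leq n$ built into $A^\enn$ is exactly what prevents $n(\alpha,\lambda)$ from dropping below $-sn^2$, and any weakening of the defining constraint on $A^\enn$ would break the estimate. A second point worth being explicit about is that the $w$ in Part 2 is to be read as the minimal-length element carrying $\lambda$ to its image (as in Lemma \ref{lambda and w}); the contradiction argument hinges on this minimality, as without it one can place $\lambda$ on a wall of the closed fundamental alcove of $W_{\textnormal{Cox}}^\enn$ and exhibit a reduced expression for some $w$ with a $\beta_j^\enn$ vanishing on $\lambda$.
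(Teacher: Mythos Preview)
Your proof is correct and follows essentially the same approach as the paper. For Part~1 you spell out directly the case analysis on $\widehat{\alpha}\in\widetilde{\Phi}_+^\enn$ that the paper leaves implicit behind its citation of \eqref{gamma clear q} and Lemma~\ref{gamma not zero}; for Part~2 your deletion argument via $w\,s_{\beta_j^\enn}=s_{i_1}^\enn\cdots\widehat{s_{i_j}^\enn}\cdots s_{i_\ell}^\enn$ is exactly equivalent to the paper's observation \eqref{si matters} together with the self-adjointness \eqref{self adjoint}, and you correctly flag that the minimality of $w$ from Lemma~\ref{lambda and w} is an implicit hypothesis here.
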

\begin{proof}
The first part follows from (\ref{gamma clear q}) and Lemma~\ref{gamma not zero}. For the second part, note that because $w^\enn$ is the shortest element of $W^\enn_{\textnormal{Cox}}$ with $w^\enn \lambda=\mu$, we must have
\begin{equation}\label{si matters}
    s_{i_j}^\enn \cdots s_{i_\ell}^\enn \lambda\neq s_{i_{j+1}}^\enn \cdots s_{i_\ell}^\enn \lambda
\end{equation}
for all $1\leq j\leq \ell$. By (\ref{general reflection}) and (\ref{si as reflection}), (\ref{si matters}) implies
\begin{equation*}
    0\neq \langle \alpha_{i_j}^\enn , s_{i_{j+1}}^\enn \cdots s_{i_\ell}^\enn \lambda \rangle =\langle  s_{i_\ell}^\enn \cdots s_{i_{j+1}}^\enn \alpha_{i_j}^\enn  , \lambda  \rangle =\langle \beta_j^\enn,\lambda\rangle.
\end{equation*}
Then the result follows from the first part of this lemma and the fact that $\beta_1^\enn, \dots, \beta_\ell^\enn$ are positive roots. 
\end{proof}

\section{(Nonsymmetric) SSV polynomials}\label{section: ssv polys}

The following theorem was stated in \cite{ssv} and will be proven in \cite{ssv2}.

\begin{theorem}\label{characterization}
There exists a unique family of elements $\{E_\mu^\enn\in \mathbb{F}^\enn[x^{\pm 1}]: \mu\in\mathbb{Z}^r\}$
such that for each $\mu\in\mathbb{Z}^r$, the coefficient of $x^\mu$ in $E_\mu^\enn$ is 1 and, for all $\widehat{\nu}\in P_a^\enn$,
\begin{equation}\label{Y eigenvalues}
    \pn(Y^{\widehat{\nu}})E^\enn_\mu=\gamma(\widehat{\nu};\mu)E_\mu^\enn.
\end{equation}
This last condition is equivalent to the condition that, for $1\leq i\leq r$ and $\mu\in\mathbb{Z}^r$,
\begin{equation}\label{Y_i evals}
    \pn(Y^{n\epsilon_i}) E_\mu=q^{-\mu_i}\left(\prod_{1\leq j<i}\sigma((\mu,\epsilon_j-\epsilon_i))\right)^{-1}\left(\prod_{i< j\leq r-1}\sigma((\mu,\epsilon_i-\epsilon_j))\right)E_\mu.
\end{equation}
\end{theorem}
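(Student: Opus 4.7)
The plan is to prove the theorem in two steps: first, establish triangularity of each $\pn(Y^{n\epsilon_i})$ on the monomial basis $\{x^\mu:\mu\in\mathbb{Z}^r\}$ with respect to a suitable partial order, with the diagonal coefficient of $x^\mu$ matching exactly the eigenvalue in (\ref{Y_i evals}); second, use that the resulting diagonal eigenvalue tuples separate points to simultaneously diagonalize the commuting family $\{\pn(Y^{n\epsilon_i})\}_{i=1}^r$. Combined with the monic normalization, this yields both existence and uniqueness of $\{E_\mu^\enn\}$.

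For the triangularity step, I would compute $\pn(Y^{n\epsilon_i})x^\mu$ directly using the factorization
\[
Y^{n\epsilon_i}=T_{i-1}^{-1}\cdots T_1^{-1}\,\omega^\enn\, T_{r-1}\cdots T_i
\]
from (\ref{BZ Y}). Each application of $\pn(T_j)$ to a monomial splits via (\ref{Ti action}) into a ``rational'' contribution (which expands, after clearing denominators, into a sum of monomials supported on $\lambda+\mathbb{Z}\alpha_j$) and a ``swap'' contribution $G^\enn_{(\lambda,\alpha_j)}x^{s_j\lambda}$; similarly for $\pn(T_j^{-1})$ using $T_j^{-1}=T_j-(k-k^{-1})$. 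In the cleanest range $0\le(\lambda,\alpha_j)\le n$, Corollary~\ref{Ti on good monomials} collapses $\pn(T_j)x^\lambda$ to a single scalar multiple of $x^{s_j\lambda}$, which simplifies the bookkeeping; the general case requires tracking the monomial supports. The factor $\pn(\omega^\enn)$ acts as the pure monomial map $x^\lambda\mapsto q^{-\lambda_r}x^{s_1\cdots s_{r-1}\lambda}$ from (\ref{omega rep}). Reading off the coefficient of $x^\mu$ across the product gives precisely the scalar in (\ref{Y_i evals}), while every other term is produced by introducing at least one ``swap'' and thus strictly decreases the exponent in the partial order $\overset{n}{<}$ alluded to in the excerpt (which refines the $n$-scaled dominance order and resolves ties by a Weyl-group-length comparison).

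Given triangularity, a standard inductive diagonalization produces joint eigenfunctions of the $\pn(Y^{n\epsilon_i})$ provided the diagonal spectra separate $\mathbb{Z}^r$. For this, note that for distinct $\mu\neq\mu'$ some coordinate $\mu_i\neq\mu'_i$, and then the $Y^{n\epsilon_i}$-eigenvalues are $q^{-\mu_i}\cdot(\text{unit in }\mathbb{K}^\enn)$ and $q^{-\mu'_i}\cdot(\text{unit in }\mathbb{K}^\enn)$, which differ because $q$ is transcendental over $\mathbb{K}^\enn$. Hence each joint eigenspace is one-dimensional, and the requirement that the $x^\mu$-coefficient of $E_\mu^\enn$ equal $1$ pins down $E_\mu^\enn$ uniquely. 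This simultaneously proves existence and uniqueness.

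Finally, to upgrade from the $r$-tuple condition (\ref{Y_i evals}) to the full statement (\ref{Y eigenvalues}), I would use that (\ref{gamma clear q}) exhibits $\gamma(\,\cdot\,;\mu):P_a^\enn\to\mathbb{F}^\enn$ as a group homomorphism, and that $\{Y^{n\epsilon_1},\dots,Y^{n\epsilon_r}\}$ together with $q$ generate the abelian subalgebra $\{Y^{\widehat{\nu}}:\widehat{\nu}\in P_a^\enn\}$ multiplicatively. The main obstacle I anticipate is the first step: pinning down the correct partial order $\overset{n}{<}$ so that every non-diagonal contribution from the expansion of $\pn(T_j^{\pm 1})x^\lambda$ is strictly lower, and then carefully assembling the diagonal factors through the full product of $r$ operators into the exact $\sigma$-product of (\ref{Y_i evals}) without losing track of sign or $q$-power conventions.
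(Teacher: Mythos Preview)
The paper does not prove Theorem~\ref{characterization}: the sentence immediately preceding it reads ``The following theorem was stated in \cite{ssv} and will be proven in \cite{ssv2}.'' Everything that follows in the paper \emph{assumes} the theorem; in particular, triangularity (Corollary~\ref{triangularity corollary}) is obtained \emph{after the fact} as a consequence of the alcove walk formula, not as an input. The ingredients in the paper could be reorganized into a self-contained proof---compute directly that $x^\lambda$ is a $Y$-eigenfunction for $\lambda\in A^\enn$ (the calculation in Proposition~\ref{monomial}), propagate by the purely algebraic intertwining relation (\ref{intertwiners and y 2 S}), expand $\uptau_w^\vee x^\lambda$ via Ram--Yip, and read off triangularity from the formula---and this intertwiner route is presumably close to what \cite{ssv2} does. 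That is not, however, the route you propose.

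Your direct-triangularity plan is the classical $n=1$ strategy and is in principle viable, but there are two concrete problems. First, you have the bookkeeping reversed: tracing $x^\mu$ through $Y^{n\epsilon_i}=T_{i-1}^{-1}\cdots T_1^{-1}\,\omega^\enn\,T_{r-1}\cdots T_i$, it is the all-\emph{swap} path that returns to $x^\mu$ (exactly the computation in the proof of Proposition~\ref{monomial}); taking only rational pieces leaves you at $x^{s_1\cdots s_{r-1}\mu}$ after $\omega^\enn$ acts, not at $x^\mu$. So your sentence ``every other term is produced by introducing at least one `swap'\,'' is backwards, and the diagonal scalar you want is the product of the swap coefficients $G^\enn_{(\cdot,\alpha_j)}$ together with the $q$-power from $\omega^\enn$, which then has to be matched to the $\sigma$-product in (\ref{Y_i evals}). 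Second, and more seriously, for $\mu\notin A^\enn$ the rational piece of $\pi^\enn(T_j)x^\lambda$ is genuinely nonzero and contributes a string of monomials $x^{\lambda-cn\alpha_j}$; you have not argued that none of these, after passing through the remaining factors and $\omega^\enn$, can re-land on $x^\mu$ (which would corrupt the diagonal entry), nor that the surviving monomials are strictly lower in $\overset{n}{<}$. You rightly flag this as ``the main obstacle'', but combined with the swap/rational reversal it indicates the computation has not actually been carried out; as written, the proposal is a strategy with its hard step left open. Your eigenvalue-separation argument via the $q$-power, by contrast, is correct.
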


Call the family $\{E_\mu^\enn:\mu\in\mathbb{Z}^r\}$ the (nonsymmetric) \emph{SSV polynomials}. We will sometimes write 
\begin{equation*}
    E_\mu^\enn=E_\mu^\enn(x;q,k)
\end{equation*}
to emphasize the dependence on the parameters $q$ and $k$. For $n\nu\in n\mathbb{Z}^r$, these satisfy
\begin{equation*}
    E_{n\nu}^\enn(x_1,\dots,x_r;q,k)=E_\nu(x_1^n, \dots, x_r^n; q^{n},k),
\end{equation*}
where $E_\nu(x_1,\dots,x_r;q,k)$ is the standard nonsymmetric Macdonald polynomial corresponding to $GL_n$. For examples, see Appendix~\ref{appendix: nonsymmetric}.


\begin{proposition}\label{monomial}
Let $\lambda\in A^\enn$. Then $E_\lambda^\enn=x^\lambda$.
\end{proposition}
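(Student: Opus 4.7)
The plan is to invoke the uniqueness characterization of Theorem~\ref{characterization}: it suffices to show that $x^\lambda$ has coefficient $1$ at $x^\lambda$ (obvious) and satisfies the eigenvalue equation \eqref{Y_i evals} for every $1\leq i\leq r$ when $\mu=\lambda$. The whole proof reduces to a step-by-step evaluation of $\pi^\enn(Y^{n\epsilon_i})x^\lambda$ using the factorization
\[
Y^{n\epsilon_i}=T_{i-1}^{-1}\cdots T_1^{-1}\,\omega^\enn\, T_{r-1}\cdots T_i
\]
from \eqref{BZ Y}. The crucial point that makes the computation work is that $\lambda\in A^\enn$ forces $0\leq \lambda_a-\lambda_b\leq n$ for all $a<b$, so at every stage of the computation the intermediate monomial satisfies the hypotheses of Corollary~\ref{Ti on good monomials}.

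Concretely, I would first apply $T_i, T_{i+1},\dots, T_{r-1}$ in turn. At step $j$ the current monomial is $x^{\mu^{(j)}}$ with $(\mu^{(j)},\alpha_j)=\lambda_i-\lambda_{j+1}\in[0,n]$, so Corollary~\ref{Ti on good monomials} yields a factor $\sigma(\lambda_i-\lambda_{j+1})$ and pushes the entry $\lambda_i$ one slot to the right. After $r-i$ steps the result is
\[
\Bigl(\prod_{j=i+1}^{r}\sigma(\lambda_i-\lambda_j)\Bigr)\, x^{(\lambda_1,\dots,\lambda_{i-1},\lambda_{i+1},\dots,\lambda_r,\lambda_i)}.
\]
Applying $\omega^\enn$ via \eqref{omega rep} then contributes $q^{-\lambda_i}$ and cyclically rotates the exponent to $(\lambda_i,\lambda_1,\dots,\lambda_{i-1},\lambda_{i+1},\dots,\lambda_r)$. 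Finally I would apply $T_1^{-1},T_2^{-1},\dots,T_{i-1}^{-1}$; at step $j$ the exponent difference $(\mu,\alpha_j)=\lambda_i-\lambda_j\in[-n,0]$ lies in the range of the second half of Corollary~\ref{Ti on good monomials}, contributing $\sigma(\lambda_j-\lambda_i)^{-1}$ and moving $\lambda_i$ past $\lambda_j$. After $i-1$ such steps the exponent has returned to $\lambda$, and the accumulated scalar is
\[
q^{-\lambda_i}\Bigl(\prod_{j=1}^{i-1}\sigma(\lambda_j-\lambda_i)\Bigr)^{-1}\prod_{j=i+1}^{r}\sigma(\lambda_i-\lambda_j),
\]
which is precisely the eigenvalue on the right-hand side of \eqref{Y_i evals} with $\mu=\lambda$.

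The only real care required is the bookkeeping: one must track the permutation of the entries and verify that at each intermediate step the relevant inner product $(\mu,\alpha_j)$ lies in $[0,n]$ (for the $T_j$'s) or $[-n,0]$ (for the $T_j^{-1}$'s), so that Corollary~\ref{Ti on good monomials} is actually applicable. Both bounds follow immediately from the defining inequalities $\lambda_1\geq\lambda_2\geq\cdots\geq\lambda_r$ and $\lambda_1-\lambda_r\leq n$ of $A^\enn$, since every difference $\lambda_a-\lambda_b$ with $a<b$ satisfies $0\leq\lambda_a-\lambda_b\leq\lambda_1-\lambda_r\leq n$. With these checks in hand, uniqueness in Theorem~\ref{characterization} forces $E_\lambda^\enn=x^\lambda$.
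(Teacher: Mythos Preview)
Your proof is correct and follows essentially the same approach as the paper: both compute $\pi^\enn(Y^{n\epsilon_i})x^\lambda$ by expanding via the factorization \eqref{BZ Y}, repeatedly applying Corollary~\ref{Ti on good monomials} (using that $\lambda\in A^\enn$ forces the relevant inner products into $[0,n]$ or $[-n,0]$), and then invoking the uniqueness in Theorem~\ref{characterization}. Your bookkeeping is in fact slightly cleaner, and your final product $\prod_{j=i+1}^{r}\sigma(\lambda_i-\lambda_j)$ correctly runs to $r$ (matching $\gamma(n\epsilon_i;\lambda)$), whereas the paper's displayed formula \eqref{Y_i evals} has a typographical upper bound of $r-1$.
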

\begin{proof}
We will calculate this explicitly, using Corollary~\ref{Ti on good monomials} and the explicit form (\ref{explicit Y}) of the elements $Y_i$. Let $1\leq i\leq r-1$. 
Note that for all $j$ with $i\leq j\leq r-1$, we have
\begin{equation*}
    (s_{j-1}\cdots s_i\lambda,\alpha_i)=\lambda_i-\lambda_{j+1}.
\end{equation*}
Since $\lambda\in A^\enn$,
\begin{equation*}
    0\leq (s_{j-1}\cdots s_i\lambda,\alpha_i)\leq n.
\end{equation*}
Then by the first statement of Corollary~\ref{Ti on good monomials},
\begin{align*}
    \pn(T_{r-1})\cdots \pn(T_i) x^\lambda &=\sigma((s_{r-2}\cdots s_i\lambda,\alpha_{r-1}))\cdots \sigma((\lambda,\alpha_i)) x^{s_{r-1}\cdots s_i\lambda}
    \\&= \sigma((\lambda,\epsilon_i-\epsilon_r))\cdots\sigma((\lambda,\epsilon_i-\epsilon_{i+1})) x^{s_{r-1}\cdots s_i\lambda}.
\end{align*}
Next, we compute
\begin{equation*}
    \omega^\enn x^{s_{r-1}\cdots s_i\lambda}=q^{-(\epsilon_r,s_{r-1}\cdots s_i\lambda)} x^{s_1\cdots s_{r-1}s_{r-1}\cdots s_i\lambda}=q^{-\lambda_i}x^{s_1\cdots s_{i-1}\lambda}.
\end{equation*}
Applying the second statement of the lemma, we have
\begin{align*}
    \pn(T_{i-1}^{-1})\cdots \pn(T_{1}^{-1})x^{s_1\cdots s_{i-1}\lambda} &= \sigma((\lambda,\alpha_{i-1}))^{-1}\cdots\sigma((s_2\cdots s_{i-1}\lambda,\alpha_1))^{-1}x^{\lambda}
    \\ &=\sigma((\lambda,\alpha_{i-1}))^{-1}\cdots\sigma((\lambda,s_{i-1}\cdots s_2\alpha_1))^{-1}x^{\lambda}
    \\ &=\sigma((\lambda,\epsilon_{i-1}-\epsilon_i))^{-1}\cdots\sigma((\lambda,\epsilon_1-\epsilon_i))^{-1}x^{\lambda}.
\end{align*}
Then 
\begin{equation*}
    \pn(Y^{n\epsilon_i}) x^\lambda=q^{-\lambda_i}\left(\prod_{1\leq j<i}\sigma((\lambda,\epsilon_j-\epsilon_i))\right)^{-1}\left(\prod_{i< j\leq r-1}\sigma((\lambda,\epsilon_i-\epsilon_j))\right)x^\lambda=\gamma(n\epsilon_i;\lambda)x^\lambda,
\end{equation*}
by (\ref{Y_i evals}). 
By Theorem~\ref{characterization}, $E_\lambda^\enn=x^\lambda$.
\end{proof}

For $0\leq i\leq r-1$, we define the \emph{polynomial intertwiner} 
$S_i\in \mathbb{H}$ by
\begin{equation*}
    S_i=T_i^\vee(1-Y^{-\alpha_i}) +(k^{-1}-k).
\end{equation*}
We use the same notation for the element of $\mathbb{H}^\enn$ defined by
\begin{equation*}
    S_i=\Psi^\enn(S_i)=T_i^\vee(1-Y^{-\alpha_i^\enn}) +(k^{-1}-k).
\end{equation*}
The following is the essential property of the polynomial form intertwiners, where we use $f\sim g$ to mean that $f$ is a nonzero multiple of $g$. 
\begin{proposition}\label{intertwiner property S}
We have
\begin{equation}\label{intertwiners and y S}
    S_i Y^{\widehat{\nu}}=Y^{s_i *\widehat{\nu}}S_i
\end{equation}
in $\mathbb{H}$, where $0\leq i\leq r-1$ and $\widehat{\nu}\in\zra$. In particular, applying $\Psi^\enn$ and changing indices, we have
\begin{equation}\label{intertwiners and y 2 S}
    S_i Y^{\widehat{\nu}}=Y^{s_i^\enn *\widehat{\nu}}S_i
\end{equation}
in $\mathbb{H}^\enn$ as well, where now $0\leq i\leq r-1$ and $\widehat{\nu}\in P_a^\enn$.
Further, for any $\mu\in\mathbb{Z}^r$ and $0\leq i\leq r-1$ such that $s_i^\enn \mu\neq \mu$,
\begin{equation}\label{intertwiner on E S}
    \pn(S_i) E_\mu^\enn\sim E_{s_i^\enn \mu}^\enn.
\end{equation}
\end{proposition}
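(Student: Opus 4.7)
The plan is to handle each identity in turn: \eqref{intertwiners and y S} is a direct computation in $\mathbb{H}$; \eqref{intertwiners and y 2 S} follows by transporting through the isomorphism $\Psi^\enn$ of Proposition~\ref{psi hecke}; and \eqref{intertwiner on E S} is a spectral consequence of the first two.

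For \eqref{intertwiners and y S}, I would substitute $S_i = T_i^\vee(1-Y^{-\alpha_i}) + (k^{-1}-k)$ into $S_i Y^{\widehat{\nu}} - Y^{s_i * \widehat{\nu}} S_i$ and expand to get four $T_i^\vee$-monomial terms together with the scalar remainder $(k^{-1}-k)(Y^{\widehat{\nu}}-Y^{s_i*\widehat{\nu}})$. Applying the Bernstein-Zelevinsky cross relation separately to $T_i^\vee Y^{\widehat{\nu}}$, $T_i^\vee Y^{\widehat{\nu}-\alpha_i}$, and $T_i^\vee Y^{-\alpha_i}$ (and using $s_i*(\widehat{\nu}-\alpha_i) = s_i*\widehat{\nu}+\alpha_i$), the $T_i^\vee$-pieces cancel in pairs, and what remains shares the common prefactor $(k-k^{-1})/(1-Y^{-\alpha_i})$; a short bookkeeping check confirms the resulting numerator vanishes. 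Identity \eqref{intertwiners and y 2 S} is then immediate by applying $\Psi^\enn$, which by construction sends $S_i\mapsto S_i$, $Y^{\widehat{\nu}}\mapsto Y^{\Psi^\enn(\widehat{\nu})}$, and $s_i*\widehat{\nu}\mapsto s_i^\enn*\Psi^\enn(\widehat{\nu})$.

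For \eqref{intertwiner on E S}, I would apply $\pi^\enn$ to \eqref{intertwiners and y 2 S}, evaluate on $E_\mu^\enn$, and use the eigenvalue relation \eqref{Y eigenvalues}:
\[
\pi^\enn(Y^{s_i^\enn*\widehat{\nu}})\bigl(\pi^\enn(S_i)E_\mu^\enn\bigr) = \pi^\enn(S_i)\pi^\enn(Y^{\widehat{\nu}})E_\mu^\enn = \gamma(\widehat{\nu};\mu)\,\pi^\enn(S_i)E_\mu^\enn.
\]
Since $s_i^\enn\mu\ne\mu$, Lemma~\ref{gammas equal} applied with $\lambda = s_i^\enn\mu$ (so $s_i^\enn\lambda = \mu\ne\lambda$) gives $\gamma(\widehat{\nu};\mu) = \gamma(s_i^\enn*\widehat{\nu};\,s_i^\enn\mu)$. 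Setting $\widehat{\nu}' = s_i^\enn*\widehat{\nu}$ and letting it vary over $P_a^\enn$, I conclude that $\pi^\enn(S_i)E_\mu^\enn$ carries exactly the joint $Y$-eigenvalues that characterize $E_{s_i^\enn\mu}^\enn$ via Theorem~\ref{characterization}. Since the SSV polynomials form a basis of joint eigenfunctions and the eigenvalues $\{\gamma(\widehat{\nu};\nu)\}_{\widehat{\nu}\in P_a^\enn}$ distinguish distinct $\nu$ via their $q$-exponents, the relevant eigenspace is one-dimensional, so $\pi^\enn(S_i)E_\mu^\enn = c\,E_{s_i^\enn\mu}^\enn$ for a unique $c\in\mathbb{F}^\enn$.

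The main obstacle will be proving $c\ne 0$. My plan is to exploit $S_i^2$: iterating \eqref{intertwiners and y 2 S}, $S_i^2$ commutes with every $Y^{\widehat{\nu}}$, and a cross-relation computation analogous to that of part 1 identifies $S_i^2$ as an explicit element of the (localized) subalgebra generated by $Y^{\alpha_i^\enn}$ whose only zeros, viewed as a function of $Y^{-\alpha_i^\enn}$, lie in $\{k^{\pm 2}\}$. Acting on $E_\mu^\enn$, this yields $\pi^\enn(S_i^2)E_\mu^\enn = c'\,E_\mu^\enn$ with $c' = 0$ iff $\gamma(-\alpha_i^\enn;\mu)\in\{k^{\pm 2}\}$. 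By \eqref{gamma clear q}, the $q$-exponent of $\gamma(-\alpha_i^\enn;\mu)$ equals $\langle\alpha_i^\enn,\mu\rangle/n$, which is nonzero precisely when $s_i^\enn\mu\ne\mu$ (a direct check for $i\ne 0$, where $\alpha_i^\enn = n\alpha_i$, and for $i=0$, using $\alpha_0^\enn = -n\theta + n^2\delta$). Hence $\gamma(-\alpha_i^\enn;\mu)$ has nontrivial $q$-dependence while $k^{\pm 2}$ lie in $\mathbb{K}^\enn$, forcing $c'\ne 0$ and therefore $c\ne 0$.
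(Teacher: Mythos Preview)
Your proposal is correct and follows essentially the same route as the paper: a direct Bernstein--Zelevinsky computation for \eqref{intertwiners and y S}, transport via $\Psi^\enn$ for \eqref{intertwiners and y 2 S}, the eigenvalue calculation combined with Lemma~\ref{gammas equal} and Theorem~\ref{characterization} to pin down the eigenspace, and the $S_i^2$ argument with the $q$-exponent observation to rule out $c=0$. The paper makes the formula $S_i^2 = (k^2+k^{-2}) - (Y^{\alpha_i^\enn}+Y^{-\alpha_i^\enn})$ fully explicit (no localization is needed---it lies in $\mathbb{H}^\enn$ itself), and invokes the uniqueness clause of Theorem~\ref{characterization} directly rather than arguing separately that the joint eigenspace is one-dimensional, but these are cosmetic differences.
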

\begin{proof}
The proof of (\ref{intertwiners and y S}) is a straightforward exercise using the relations (\ref{BZ Y}), and (\ref{intertwiners and y 2 S}) follows from (\ref{intertwiners and y S}). For (\ref{intertwiner on E S}), we simply compute that for ${\widehat{\nu}}\in P_a^\enn$,
\begin{align*}
    \pn(Y^{{{\widehat{\nu}}}}) \pn(S_i) E_\mu^\enn
    &=\pn(S_i) \pn(Y^{s_i^\enn * {{{\widehat{\nu}}}}}) E_\mu^\enn 
    \\&=\gamma(s_i^\enn * \widehat{\nu};\mu)\pn(S_i)E_\mu^\enn
    \\&=\gamma(\widehat{\nu};s_i^\enn \mu) \pn(S_i)E_\mu^\enn,
\end{align*}
where the first equality follows from (\ref{intertwiners and y 2 S}), the second from (\ref{Y eigenvalues}), and the third from Lemma~\ref{gammas equal}. Then Theorem~\ref{characterization} implies that $\pn(S_i) E_\mu^\enn$ is a multiple of $E_{s_i^\enn \mu}^\enn$. To see that it is nonzero, we will show that $\pn(S_i)^2 E_\mu^\enn\neq 0$. 
By a direct computation using (\ref{BZ Y}), we see that
\begin{equation*}
    S_i^2=(k^2+k^{-2})-(Y^{\alpha_i}+Y^{-\alpha_i}).
\end{equation*}
Then
\begin{equation*}
    \pn(S_i)^2 E_\mu^\enn=((k^2+k^{-2})-(\gamma({\alpha_i^\enn};\mu)+\gamma({\alpha_i^\enn};\mu)^{-1}))E_\mu^\enn.
\end{equation*}
In order for this scalar to be zero, we would need $\gamma({\alpha_i^\enn};\mu)=k^{\pm 2}$. This is impossible: since $s_i^\enn \mu\neq \mu$, we have $\langle \alpha_i^\enn,\mu\rangle\neq 0$, and therefore (\ref{gamma clear q}) makes it clear that $\gamma({\alpha_i^\enn};\mu)$ has a nonzero power of $q$. 
\end{proof}

In order to use the same intertwiners as in \cite{ry}, we must first define localized versions of the relevant Hecke algebras. Let $\mathbb{H}_Y$ be the subalgebra of $\mathbb{H}$ generated by $q_n, T_0^\vee, \dots, T_{r-1}^\vee$, and $Y^{\widehat{\mu}}$ for $\widehat{\mu}\in P_a$. (Note that this is not all of $\mathbb{H}$. $\mathbb{H}_Y$ does include the element $\omega$ and $X^{\mu}$ for $\mu\in Q$, but does not include all $X^{\mu}$ for $\mu\in \mathbb{Z}^r$.) 
Consider the multiplicatively closed subset $S$ of $\mathbb{H}_Y$ generated by
\begin{equation*}
    \{1-Y^{\widehat{\alpha}}: \widehat{\alpha}\in\widetilde{\Phi}\}.
\end{equation*}
$S$ satisfies the right Ore condition: this follows from the commutativity of the $Y$'s and the identity
\begin{equation*}
    T_i^\vee (1-Y^{-\alpha_i})(1-Y^{\widehat{\alpha}})(1-Y^{s_i\widehat{\alpha}})=(1-Y^{\widehat{\alpha}})\left(T_i^\vee (1-Y^{-\alpha_i})(1-Y^{\widehat{\alpha}})-(k-k^{-1})(Y^{s_i\widehat{\alpha}}-Y^{\widehat{\alpha}})   \right)
\end{equation*}
for $0\leq i\leq r-1$ and $\widehat{\alpha}\in\widetilde{\Phi}$, which can be checked directly using (\ref{BZ Y}). This allows us to define $\mathbb{H}_{Y,loc}$ as the localization of $\mathbb{H}_Y$ at $S$. 

We may similarly define $\mathbb{H}_Y^\enn$ and the localization $\mathbb{H}_{Y,loc}^\enn$ of $\mathbb{H}_Y^\enn$ at the multiplicatively closed subset generated by
\begin{equation*}
    \{1-Y^{\widehat{\alpha}}: \widehat{\alpha}\in\widetilde{\Phi}^\enn\}.
\end{equation*}
We have an isomorphism
\begin{equation*}
    \Psi^\enn:\mathbb{H}_{Y,loc}\rightarrow \hn_{Y,loc},
\end{equation*}
defined in the obvious way. 

For $0\leq i\leq r-1$, we may now define the \emph{intertwiners} $\uptau_i^\vee\in \mathbb{H}_{Y,loc}$ by
\begin{equation}\label{intertwiner def}
    \uptau_i^\vee=T_i^\vee +\dfrac{k^{-1}-k}{1-Y^{-\alpha_i}}=S_i(1-Y^{-\alpha_i})^{-1}.
\end{equation}
These are precisely the same intertwiners used in \cite{ry}. We also have the version for $\mathbb{H}_{Y,loc}^\enn$, for which we use the same notation:
\begin{equation*}
    \uptau_i^\vee=\Psi^\enn(\uptau_i^\vee)=T_i^\vee +\dfrac{k^{-1}-k}{1-Y^{-\alpha_i^\enn}}=S_i(1-Y^{-\alpha_i^\enn})^{-1}.
\end{equation*}

We now wish to construct a version of the representation $\pi^\enn$ for $H_{Y,loc}^\enn$. First, define
\begin{equation*}
    \mathcal{E}=\textnormal{span}_{K^\enn(q)}\{E_\mu^\enn: \mu\in\mathbb{Z}^r\}.
\end{equation*}
We will soon see that $\mathcal{E}=\mathbb{F}^\enn[x^{\pm 1}]$. The proof of this result can be found in \cite{ssv2}, but it is easy to see from Theorem~\ref{main}, so we include it here as well.

Note that $q$, $S_i$ for $0\leq i\leq r-1$, and $Y^{\widehat{\mu}}$ for $\widehat{\mu}\in P_a^\enn$ generate $\mathbb{H}_Y^\enn$ and, by Proposition~\ref{intertwiner property S} and Theorem~\ref{characterization}, they preserve the subspace $\mathcal{E}$. Define a representation of $\mathbb{H}_{Y,loc}^\enn$ on $\mathcal{E}$, which we call $\pi^\enn$ by abuse of notation, by 
\begin{equation*}
    \pn(z)=\pn(z)|_{\mathcal{E}}
\end{equation*}
for $z\in\mathbb{H}_Y$ and 
\begin{equation*}
    \pn((1-Y^{\widehat{\alpha}})^{-1})=(\pn(1-Y^{\widehat{\alpha}})|_{\mathcal{E}})^{-1}
\end{equation*}
for $\widehat{\alpha}\in\widetilde{\Phi}$. 

\begin{proposition}
The representation $\pn$ of $\mathbb{H}_{Y,loc}^\enn$ on $\mathcal{E}$ is well-defined. 
\end{proposition}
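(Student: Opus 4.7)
The plan is to invoke the universal property of Ore localization: given a representation of $\mathbb{H}_Y^\enn$ on a vector space $V$, it extends (uniquely) to a representation of $\mathbb{H}_{Y,loc}^\enn$ on $V$ if and only if the images of the elements of the Ore subset $S$ are invertible on $V$. So the proof reduces to two tasks: (a) showing that $\mathcal{E}$ is stable under $\pn(\mathbb{H}_Y^\enn)$, so that $\pn$ restricts to a representation of $\mathbb{H}_Y^\enn$ on $\mathcal{E}$; and (b) showing that $\pn(1-Y^{\widehat{\alpha}})|_{\mathcal{E}}$ is invertible for every $\widehat{\alpha}\in\widetilde{\Phi}^\enn$.

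Before either step, I would record that $\{E_\mu^\enn:\mu\in\mathbb{Z}^r\}$ is actually a basis of $\mathcal{E}$. This follows from Theorem~\ref{characterization}: the eigenvalue of $Y^{n\epsilon_i}$ on $E_\mu^\enn$ contains the factor $q^{-\mu_i}$, which separates distinct weights $\mu$, so the $E_\mu^\enn$ are linearly independent eigenvectors of a commuting family. Task (b) is then immediate: in the basis $\{E_\mu^\enn\}$, the operator $\pn(1-Y^{\widehat{\alpha}})$ is diagonal with eigenvalue $1-\gamma(\widehat{\alpha};\mu)$ on $E_\mu^\enn$, and Lemma~\ref{gamma not zero}(2) guarantees that none of these eigenvalues vanish. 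The inverse is therefore the diagonal operator sending $E_\mu^\enn$ to $(1-\gamma(\widehat{\alpha};\mu))^{-1}E_\mu^\enn$.

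For task (a), it suffices to check stability on the generators $q$, $T_0^\vee,\dots,T_{r-1}^\vee$, and $Y^{\widehat{\mu}}$ of $\mathbb{H}_Y^\enn$. The operators $\pn(q)$ and $\pn(Y^{\widehat{\mu}})$ act diagonally on $\{E_\mu^\enn\}$ and so preserve $\mathcal{E}$ trivially. For $T_i^\vee$, I would use the identity
\[
T_i^\vee\bigl(1-Y^{-\alpha_i^\enn}\bigr) = S_i-(k^{-1}-k),
\]
which rearranges the definition of $S_i$. Applying $\pn$ to $E_\mu^\enn$, the right-hand side lies in $\mathcal{E}$ by Proposition~\ref{intertwiner property S}, while the left-hand side equals $\bigl(1-\gamma(-\alpha_i^\enn;\mu)\bigr)\,\pn(T_i^\vee)E_\mu^\enn$. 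Since $\gamma(-\alpha_i^\enn;\mu)\neq 1$ by Lemma~\ref{gamma not zero}(2), we can solve for $\pn(T_i^\vee)E_\mu^\enn$ as an element of $\mathcal{E}$, as desired. With (a) and (b) both verified, the Ore localization property produces the required extension.

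The main subtlety is that the proofs of stability under $T_i^\vee$ and invertibility of $1-Y^{-\alpha_i^\enn}$ on $\mathcal{E}$ are closely intertwined, but both ultimately hinge on the single nonvanishing statement $\gamma(\widehat{\alpha};\mu)\neq 1$ of Lemma~\ref{gamma not zero}(2). Once that input is in hand, the argument is purely formal, and no genuine obstacle remains.
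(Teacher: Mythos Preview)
Your proof is correct and follows essentially the same approach as the paper: both reduce well-definedness to the invertibility of $\pn(1-Y^{\widehat{\alpha}})$ on $\mathcal{E}$, which follows because this operator is diagonal on the $E_\mu^\enn$ with eigenvalues $1-\gamma(\widehat{\alpha};\mu)\neq 0$ by Lemma~\ref{gamma not zero}(2). The only difference is organizational: the paper establishes stability of $\mathcal{E}$ under $\mathbb{H}_Y^\enn$ in the text \emph{preceding} the proposition (by asserting that $q$, $S_i$, and $Y^{\widehat{\mu}}$ generate $\mathbb{H}_Y^\enn$ and preserve $\mathcal{E}$), whereas you fold this into the proof and verify it for the generators $T_i^\vee$ directly via the identity $T_i^\vee(1-Y^{-\alpha_i^\enn})=S_i-(k^{-1}-k)$---arguably a cleaner way to see that $T_i^\vee$ preserves $\mathcal{E}$ than the paper's generation claim.
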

\begin{proof}
We must only observe that $\pn(1-Y^{\widehat{\alpha}})$ is invertible on $\mathcal{E}$. By Theorem~\ref{characterization}, $\pn(1-Y^{\widehat{\alpha}})$ is a diagonalizable linear operator on $\mathcal{E}$, and by Lemma~\ref{gamma not zero}, its eigenvalues are all nonzero.
\end{proof}

Then $\pi^\enn(\uptau_i^\vee)$ is well-defined. We will work with $\uptau_i^\vee$ instead of $S_i$ from now on, in order to match \cite{ry}. 

We immediately have the following version of Proposition~\ref{intertwiner property S}. 

\begin{proposition}\label{intertwiner property}
We have
\begin{equation*}
    \uptau_i^\vee Y^{\widehat{\nu}}=Y^{s_i *\widehat{\nu}}\uptau_i^\vee
\end{equation*}
in $\mathbb{H}_{Y,loc}$, where $0\leq i\leq r-1$ and $\widehat{\nu}\in\zra$. Further,
\begin{equation*}
    \uptau_i^\vee Y^{\widehat{\nu}}=Y^{s_i^\enn *\widehat{\nu}}\uptau_i^\vee
\end{equation*}
in $\mathbb{H}_{Y,loc}^\enn$, where now $0\leq i\leq r-1$ and $\widehat{\nu}\in P_a^\enn$.
For any $\mu\in\mathbb{Z}^r$ and $0\leq i\leq r-1$ such that $s_i^\enn \mu\neq \mu$,
\begin{equation*}
    \pn(\uptau_i^\vee) E_\mu^\enn\sim E_{s_i^\enn \mu}^\enn.
\end{equation*}
\end{proposition}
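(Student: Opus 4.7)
The plan is to deduce this proposition directly from its analog Proposition~\ref{intertwiner property S} for the polynomial intertwiners $S_i$, using the factorization $\uptau_i^\vee = S_i(1-Y^{-\alpha_i})^{-1}$ (respectively $S_i(1-Y^{-\alpha_i^\enn})^{-1}$) recorded in (\ref{intertwiner def}). Nothing new really needs to be proved; the content is just to carry the nonzero-scalar factor $(1-Y^{-\alpha_i})^{-1}$ through the identities.

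For the first intertwining relation in $\mathbb{H}_{Y,loc}$, I would start from $\uptau_i^\vee Y^{\widehat{\nu}} = S_i(1-Y^{-\alpha_i})^{-1} Y^{\widehat{\nu}}$ and note that $(1-Y^{-\alpha_i})^{-1}$ commutes with $Y^{\widehat{\nu}}$ since all the $Y^{\widehat{\mu}}$ commute. This gives $\uptau_i^\vee Y^{\widehat{\nu}} = S_i Y^{\widehat{\nu}}(1-Y^{-\alpha_i})^{-1}$, to which (\ref{intertwiners and y S}) may be applied to yield $Y^{s_i * \widehat{\nu}} S_i (1-Y^{-\alpha_i})^{-1} = Y^{s_i * \widehat{\nu}} \uptau_i^\vee$. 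The $\mathbb{H}_{Y,loc}^\enn$ version is then obtained either by applying the isomorphism $\Psi^\enn$ to the $\mathbb{H}_{Y,loc}$ identity (the inputs and outputs of $\Psi^\enn$ on $\uptau_i^\vee$, on $Y^{\widehat{\nu}}$, and on the action $s_i * \widehat{\nu}$ are all compatible) or equivalently by rerunning the above calculation verbatim in $\mathbb{H}_{Y,loc}^\enn$ using (\ref{intertwiners and y 2 S}).

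For the statement on $E_\mu^\enn$, I would write $\pn(\uptau_i^\vee) E_\mu^\enn = \pn(S_i)\,\pn\bigl((1-Y^{-\alpha_i^\enn})^{-1}\bigr) E_\mu^\enn$. By Theorem~\ref{characterization}, $\pn(Y^{-\alpha_i^\enn})$ acts on $E_\mu^\enn$ by the scalar $\gamma(-\alpha_i^\enn;\mu)$, so $\pn\bigl((1-Y^{-\alpha_i^\enn})^{-1}\bigr) E_\mu^\enn = (1-\gamma(-\alpha_i^\enn;\mu))^{-1} E_\mu^\enn$; this scalar is well-defined and nonzero by Lemma~\ref{gamma not zero}(2). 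Applying $\pn(S_i)$ and invoking (\ref{intertwiner on E S}) from Proposition~\ref{intertwiner property S} then gives a nonzero multiple of $E_{s_i^\enn \mu}^\enn$, as required.

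There is no serious obstacle here: the only nontrivial input beyond Proposition~\ref{intertwiner property S} is the nonvanishing of $1-\gamma(-\alpha_i^\enn;\mu)$, which is exactly what Lemma~\ref{gamma not zero}(2) supplies, and the commutativity of the $Y$'s, which lets $(1-Y^{-\alpha_i^\enn})^{-1}$ be pulled freely past $Y^{\widehat{\nu}}$.
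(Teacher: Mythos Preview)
Your proposal is correct and matches the paper's approach: the paper simply states that the proposition follows immediately from Proposition~\ref{intertwiner property S}, and your argument spells out precisely how, via the factorization $\uptau_i^\vee = S_i(1-Y^{-\alpha_i^\enn})^{-1}$ together with the commutativity of the $Y$'s and Lemma~\ref{gamma not zero}(2).
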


For $w\in W^{(1)}_{\textnormal{Cox}}$ with reduced expression
\begin{equation*}
    w=s_{i_1}\cdots s_{i_m},
\end{equation*}
define
\begin{equation}\label{general intertwiners}
    \uptau_w^\vee=\uptau_{i_1}^\vee\cdots \uptau_{i_m}^\vee.
\end{equation}
It is a straightforward but tedious calculation to check that the operators $\uptau_i$ satisfy the braid relations of the $T_i$, and therefore $\uptau_w^\vee$ is independent of the choice of reduced expression of $w$. We will not actually need to use this fact, but the notation $\uptau_w^\vee$ is convenient. 

We have the following corollary of (\ref{general intertwiners}) and Proposition~\ref{intertwiner property}.

\begin{corollary}\label{general int application}
For $w\in W^\enn_{\textnormal{Cox}}$ and $\mu\in\mathbb{Z}^r$,
\begin{equation*}
    \pn(\uptau_w^\vee) E_\mu^\enn\sim E_{w\mu}^\enn.
\end{equation*}
\end{corollary}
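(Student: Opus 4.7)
The plan is to proceed by induction on $\ell(w)$, peeling off one intertwiner at a time and invoking Proposition~\ref{intertwiner property}. The base case $\ell(w)=0$ is trivial: $\uptau_1^\vee = 1$, so $\pi^\enn(\uptau_w^\vee) E_\mu^\enn = E_\mu^\enn = E_{w\mu}^\enn$. For the inductive step, I would pick a reduced expression $w = s_{i_1}^\enn\cdots s_{i_m}^\enn$ and set $w' = s_{i_2}^\enn\cdots s_{i_m}^\enn$, so $\ell(w') = m-1$ and the definition (\ref{general intertwiners}) gives $\uptau_w^\vee = \uptau_{i_1}^\vee \uptau_{w'}^\vee$. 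Applying the inductive hypothesis to $w'$ would yield
\[\pi^\enn(\uptau_w^\vee) E_\mu^\enn = \pi^\enn(\uptau_{i_1}^\vee)\bigl(\pi^\enn(\uptau_{w'}^\vee) E_\mu^\enn\bigr) \sim \pi^\enn(\uptau_{i_1}^\vee) E_{w'\mu}^\enn.\]

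Provided $s_{i_1}^\enn(w'\mu) \neq w'\mu$, the third assertion of Proposition~\ref{intertwiner property} will immediately give $\pi^\enn(\uptau_{i_1}^\vee) E_{w'\mu}^\enn \sim E_{s_{i_1}^\enn w'\mu}^\enn = E_{w\mu}^\enn$, closing the induction. The hard part is therefore checking that each step of the induction is legitimate, i.e., that every factor $s_{i_j}^\enn$ in the reduced expression genuinely moves the intermediate point $s_{i_{j+1}}^\enn\cdots s_{i_m}^\enn\mu$; if an intermediate reflection were to fix its point, Proposition~\ref{intertwiner property} would not apply (and a direct calculation shows $\pi^\enn(\uptau_{i}^\vee)$ can in fact annihilate a fixed $E_\nu^\enn$, since $\pi^\enn(S_i)^2 E_\nu^\enn = 0$ in that degenerate case). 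The saving grace is that in the intended application, $\mu$ will be a $\lambda \in A^\enn$ and $w$ will be the shortest element of $W_{\textnormal{Cox}}^\enn$ carrying $\lambda$ to a prescribed target weight (cf.~Lemma~\ref{lambda and w}); the minimality of $w$ together with Lemma~\ref{q power}(2) --- which shows $\langle \beta_j^\enn,\lambda\rangle \neq 0$ for every $j$ --- guarantees that no intermediate reflection in the reduced expression is trivial, so the induction carries through cleanly.
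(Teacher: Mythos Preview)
Your inductive approach via Proposition~\ref{intertwiner property} is exactly what the paper intends: the paper offers no proof beyond citing the definition~\eqref{general intertwiners} and Proposition~\ref{intertwiner property}, which amounts to the same induction you spell out.

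You are also right to flag the degenerate case, and in fact you have caught a genuine imprecision in the paper. The corollary as literally stated for \emph{all} $w\in W_{\textnormal{Cox}}^\enn$ and \emph{all} $\mu\in\mathbb{Z}^r$ is false: for $r=2$ and $\mu=(0,0)$ one computes directly that $\pi^\enn(\uptau_1^\vee)E_{(0,0)}^\enn=0$, which is not a nonzero multiple of $E_{(0,0)}^\enn$. Your diagnosis of why (the vanishing of $\pi^\enn(S_i)^2$ on a fixed $E_\nu^\enn$) is correct. The paper simply glosses over this.

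Your resolution is exactly the right one: in the only place the corollary is used (Proposition~\ref{fundamental domain}), one takes $\mu=\lambda\in A^\enn$ and $w$ the \emph{shortest} element of $W_{\textnormal{Cox}}^\enn$ carrying $\lambda$ to the target, and then the minimality forces every intermediate reflection in a reduced expression to act nontrivially. Your citation of Lemma~\ref{q power}(2) for this is apt; note that the proof of that lemma itself already invokes the same minimality hypothesis, so the paper is tacitly working under that assumption throughout even though the statements do not always say so.
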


The next proposition follows from Lemma~\ref{lambda and w} and Corollary~\ref{general int application}. 

\begin{proposition}\label{fundamental domain}
Let $\mu\in\mathbb{Z}^r$. There exist $\lambda\in A^\enn$ and $w\in W^\enn_{\textnormal{Cox}}$ such that
\begin{equation*}
    \pn(\uptau_w^\vee) x^\lambda=
    \pn(\uptau_w^\vee) E_\lambda \sim E_\mu.
\end{equation*}
\end{proposition}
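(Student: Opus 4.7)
The plan is to simply chain together three results that have already been established in the excerpt. Given $\mu\in\mathbb{Z}^r$, first I would invoke Lemma~\ref{lambda and w} to produce a unique $\lambda\in A^\enn$ and an element $w\in W^\enn_{\textnormal{Cox}}$ such that $w\lambda=\mu$. (Nothing in the statement requires $w$ to be of minimal length, so any such $w$ will do.)

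Next, since $\lambda\in A^\enn$, Proposition~\ref{monomial} gives the identification $E_\lambda^\enn = x^\lambda$, which takes care of the equality $\pn(\uptau_w^\vee) x^\lambda = \pn(\uptau_w^\vee) E_\lambda^\enn$. Finally, I would apply Corollary~\ref{general int application} with the same $w$ and with $\mu$ in that corollary replaced by $\lambda$: this yields
\[
\pn(\uptau_w^\vee)E_\lambda^\enn \sim E_{w\lambda}^\enn = E_\mu^\enn,
\]
using $w\lambda=\mu$ in the last step. Concatenating these gives the claim.

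There is no real obstacle here; the statement is a direct packaging of Lemma~\ref{lambda and w}, Proposition~\ref{monomial}, and Corollary~\ref{general int application}. The only thing worth noting is that Corollary~\ref{general int application} is applicable because $w\in W_{\textnormal{Cox}}^\enn$ (not merely $W^\enn$), which is precisely the hypothesis supplied by Lemma~\ref{lambda and w}. The significance of the proposition is not in the difficulty of its proof but in what it enables: every SSV polynomial $E_\mu^\enn$ can now be obtained, up to a nonzero scalar, by applying an intertwiner product $\pn(\uptau_w^\vee)$ to the monomial $x^\lambda$ for a dominant-like $\lambda\in A^\enn$. This is the starting point for extracting an explicit combinatorial formula from the Ram--Yip expansion of $\uptau_{i_1}^\vee\cdots\uptau_{i_\ell}^\vee$ in terms of alcove walks.
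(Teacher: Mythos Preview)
Your proof is correct and matches the paper's own approach: the paper simply states that the proposition follows from Lemma~\ref{lambda and w} and Corollary~\ref{general int application}, and your explicit invocation of Proposition~\ref{monomial} to justify $x^\lambda=E_\lambda^\enn$ fills in the one step the paper leaves implicit.
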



For any alcove walk $p$, recall the notation
\begin{equation*}
    \textnormal{end}(p)=\tau(\textnormal{wt}(p))\phi(p)\in W_{\textnormal{Cox}}
\end{equation*}
from (\ref{end, weight, phi}), 
where $\textnormal{wt}(p)\in Q$ and $\phi(p)\in W_0$. 

We have the following result from \cite{ry}:

\begin{theorem}\label{ry main}
Let $w\in W^{(1)}_{\textnormal{Cox}}$ and $u\in W_0$. Write $w=s_{i_1}\cdots s_{i_\ell}$, a reduced expression. For $1\leq j\leq \ell$, let $\beta_j=\beta_j^{(1)}$ as defined in \eqref{betas}.
Then in $\mathbb{H}_{Y,loc}$, we have
\begin{equation*}
    T_u\uptau_w^\vee =\sum_{p\in\mathcal{B}(u,\vec{w})}X^{\textnormal{wt}(p)}T_{\phi(p)}\left (\prod_{j\in f^+(p)}\dfrac{k^{-1}-k}{1-Y^{-\beta_j}} \right) \left(\prod_{j\in f^-(p)}\dfrac{(k^{-1}-k)Y^{-\beta_j}}{1-Y^{-\beta_j}} \right).
\end{equation*}
\end{theorem}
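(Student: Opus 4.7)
The plan is to induct on $\ell = \ell(w)$. The base case $\ell = 0$ is immediate: $\uptau_w^\vee = 1$, the set $\mathcal{B}(u,\vec{w})$ consists of the single trivial walk $p$ with $\textnormal{wt}(p) = 0$ and $\phi(p) = u$, and both sides reduce to $T_u$. For the inductive step, write $w = w' s_{i_\ell}$ with $w' = s_{i_1}\cdots s_{i_{\ell-1}}$ still reduced, and set $\beta'_j = s_{i_{\ell-1}}\cdots s_{i_{j+1}}\alpha_{i_j}$ for $1 \le j < \ell$. A direct comparison of definitions gives $\beta_j = s_{i_\ell} * \beta'_j$ for $j < \ell$ and $\beta_\ell = \alpha_{i_\ell}$, which is the identity that links the inductive hypothesis to the target formula.

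Apply the induction hypothesis to $w'$ and then right-multiply by $\uptau_{i_\ell}^\vee$. I would first push $\uptau_{i_\ell}^\vee$ through the $Y$-rational factor for each $p'$ using the intertwiner identity of Proposition~\ref{intertwiner property}: the relation $\uptau_i^\vee Y^{\widehat{\nu}} = Y^{s_i * \widehat{\nu}} \uptau_i^\vee$ implies $Y^{\widehat{\nu}}\uptau_{i_\ell}^\vee = \uptau_{i_\ell}^\vee Y^{s_{i_\ell}*\widehat{\nu}}$ since $s_{i_\ell}$ is an involution. Applied term by term, this replaces every $\beta'_j$ by $s_{i_\ell}*\beta'_j = \beta_j$, reindexing the first $\ell - 1$ factors to match the target shape for $w$.

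Next, split $\uptau_{i_\ell}^\vee = T_{i_\ell}^\vee + (k^{-1}-k)(1-Y^{-\alpha_{i_\ell}})^{-1}$ and distribute. The $T_{i_\ell}^\vee$-summand, together with $T_{\phi(p')}$, corresponds to extending $p'$ by an unfolded crossing at step $\ell$; the Bernstein-Zelevinsky cross relations in Definition~\ref{daha} absorb any $X$-corrections back into $X^{\textnormal{wt}(p)}$, producing the walks $p \in \mathcal{B}(u,\vec{w})$ with no fold at step $\ell$ and contributing $f^\pm(p) = f^\pm(p')$. The rational summand produces the walks obtained by folding $p'$ at step $\ell$; whether the resulting factor is $\tfrac{k^{-1}-k}{1-Y^{-\beta_\ell}}$ (positive fold) or $\tfrac{(k^{-1}-k)Y^{-\beta_\ell}}{1-Y^{-\beta_\ell}}$ (negative fold) is determined by the orientation of the hyperplane separating the end alcove of $p'$ from its reflection, together with the elementary identity
\[
\frac{k^{-1}-k}{1-Y^{\beta}} = -\frac{(k^{-1}-k)Y^{-\beta}}{1-Y^{-\beta}}
\]
used to normalize signs.

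The main obstacle is the sign/orientation bookkeeping in the crossing-versus-fold dichotomy: one must verify that the sign conventions for positive and negative folds in the periodic orientation of Section~\ref{section: alcove walks} are exactly matched by the signs coming out of the Bernstein-Zelevinsky cross relations and the above rewriting, and that every walk $p \in \mathcal{B}(u,\vec{w})$ arises from a unique $p' \in \mathcal{B}(u,\vec{w'})$ paired with either a crossing extension or a fold at step $\ell$. Organizing the induction through Ram's alcove walk formalism, where the two-term expansion of $\uptau_{i_\ell}^\vee$ corresponds tautologically to the binary choice (cross or fold) appended at each step, makes this bijection transparent and completes the proof.
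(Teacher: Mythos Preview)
The paper does not prove this theorem; it is quoted from \cite{ry} without argument. Your inductive outline is essentially the Ram--Yip strategy, and the reindexing $\beta_j = s_{i_\ell}*\beta'_j$ via the intertwining relation is correct.

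The gap is in the crossing/fold dichotomy. You split $\uptau_{i_\ell}^\vee = T_{i_\ell}^\vee + (\text{rational})$ once, assert that the $T_{i_\ell}^\vee$-part always produces the crossing term $X^{\textnormal{wt}(p)}T_{\phi(p)}$ with ``cross relations absorbing $X$-corrections,'' and that the rational part becomes either the positive or negative fold factor according to orientation. Neither holds as stated. The rational summand is always $\tfrac{k^{-1}-k}{1-Y^{-\beta_\ell}}$ regardless of orientation, so it cannot by itself become $\tfrac{(k^{-1}-k)Y^{-\beta_\ell}}{1-Y^{-\beta_\ell}}$; the identity you quote changes the sign of the exponent and of the whole expression, which is not the needed conversion. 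On the other side, in a negative-crossing situation (say $i_\ell\neq 0$ with $\ell(\phi(p')s_{i_\ell})<\ell(\phi(p'))$) the Hecke relation gives $T_{\phi(p')}T_{i_\ell} = T_{\phi(p)} + (k-k^{-1})T_{\phi(p')}$, so the $T_{i_\ell}^\vee$-part throws off an extra $(k-k^{-1})X^{\textnormal{wt}(p')}T_{\phi(p')}$ that must be recombined with the rational part to manufacture the negative fold factor; this is a Hecke-relation phenomenon, not a cross-relation one. Ram--Yip sidestep this by using \emph{two} splittings,
\[
\uptau_i^\vee \;=\; T_i^\vee + \frac{k^{-1}-k}{1-Y^{-\alpha_i}} \;=\; (T_i^\vee)^{-1} + \frac{(k^{-1}-k)Y^{-\alpha_i}}{1-Y^{-\alpha_i}},
\]
choosing the first when the step-$\ell$ crossing is positive and the second when it is negative. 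Combined with the $X^w$ notation of the Remark after the theorem, where $X^{w'}(T_{i_\ell}^\vee)^{\pm 1}=X^{w's_{i_\ell}}$ holds with the sign matching the crossing orientation, this makes the induction go through uniformly, including the case $i_\ell=0$ where $T_0^\vee$ genuinely involves $X^{-\theta}$.
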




\begin{remark}
The notation above differs from that in \cite{ry}. For $w\in W_{\textnormal{Cox}}$ with expression $w=s_{i_1}\cdots s_{i_\ell}$ (not necessarily reduced), Ram and Yip used the notation
\begin{equation}\label{x to the w}
    X^w=(T_{i_1}^\vee)^{\epsilon_1}\cdots (T_{i_\ell}^\vee)^{\epsilon_\ell},
\end{equation}
where $\epsilon_j$ is $1$ if the $j$th step of the corresponding alcove walk is a positive crossing and $-1$ if it is a negative crossing. This is independent of the choice of expression for $w$: see \cite{gortz}. 

The dictionary between the two choices of notation is
\begin{equation*}
    X^{\tau(\lambda)u}=X^\lambda T_u.
\end{equation*}
One can prove this by using the fact that (\ref{x to the w}) is independent of the expression for $w$ and the following standard characterization of $X^\lambda$ for $\lambda\in Q$. If $\lambda\in Q$ is antidominant and $\tau(\lambda)=s_{i_1}\cdots s_{i_\ell}$ is a reduced expression, then
\begin{equation*}
    X^\lambda=T_{i_1}^\vee \cdots T_{i_\ell}^\vee.
\end{equation*}
For general $\lambda\in Q$, we write $\lambda=\lambda_+-\lambda_-$ for antidominant $\lambda_+,\lambda_-\in Q$, and observe that
\begin{equation*}
    X^\lambda=X^{\lambda_+}(X^{\lambda_-})^{-1}.
\end{equation*}
The result then follows by expressing $\tau(\lambda) u$ in terms of an alcove walk that begins at $1$, proceeds to $\tau(\lambda_+)$, then to $\tau(\lambda)=\tau(\lambda_+)\tau(-\lambda_-)$, and finally to $\tau(\lambda)u$ (where each step follows a shortest possible route). The crossings from $1$ to $\tau(\lambda_+)$ will all be positive, the crossings from $\tau(\lambda_+)$ to $\tau(\lambda)$ will all be negative, and the crossings from $\tau(\lambda)$ to $\tau(\lambda)u$ will all be positive, giving
\begin{equation*}
    X^{\tau(\lambda)u}=X^{\lambda_+} (X^{\lambda_-})^{-1} T_u= X^\lambda T_u.
\end{equation*}
\end{remark}

Theorem~\ref{ry main}, in conjunction with the representation $\pn$, gives:

\begin{theorem}\label{main}
Let $\mu\in\mathbb{Z}^r$, and choose the unique $\lambda\in A^\enn$ and (the unique) shortest $w^\enn\in W^\enn_{\textnormal{Cox}}$ such that $w^\enn\lambda=\mu$. (This is possible by Proposition~\ref{fundamental domain}.) 
Write $w^\enn=s_{i_1}^\enn\cdots s_{i_\ell}^\enn$, a reduced expression. 
Then define
\begin{equation*}
    w=(\Psi^\enn)^{-1}(w^\enn)=s_{i_1}\cdots s_{i_\ell},
\end{equation*}
the right-hand side being a reduced expression for $w$. 
For $1\leq j\leq \ell$, let $\beta_j^\enn$ be defined as in (\ref{betas}): 
\begin{equation*}
    \beta_j^\enn=s_{i_\ell}^\enn s_{i_{\ell-1}}^\enn\cdots s_{i_{j+1}}^\enn\alpha_{i_j}^\enn.
\end{equation*}
For each $p\in \mathcal{B}(\vec{w})$, write a reduced expression for $\phi(p)\in W_0$ as follows (where $t=\ell(\phi(p))$ depends on $p$):
\begin{equation*}
    \phi(p)=s_{u_{p,1}}\cdots s_{u_{p,t}}.
\end{equation*}
Then $E_\mu^\enn$ is a nonzero scalar multiple of
\begin{align}\label{result}
    \sum_{p\in \mathcal{B}(\vec{w})} & x^{n\textnormal{wt}(p)+\phi(p)\lambda}\left (\prod_{a=1}^{\ell(\phi(p))}\sigma((\lambda,s_{u_{p,t}}\cdots s_{u_{p,a+1}} \alpha_{u_{p,a}})) \right)\cdot \notag
    \\& \cdot\left (\prod_{j\in f^+(p)}\dfrac{k^{-1}-k}{1-\gamma({-\beta_j^\enn};\lambda)} \right) \left(\prod_{j\in f^-(p)}\dfrac{(k^{-1}-k)\gamma({-\beta_j^\enn};\lambda)}{1-\gamma({-\beta_j^\enn};\lambda)} \right).
\end{align}
More precisely, if $\widetilde{p}$ is the unique unfolded walk in $\mathcal{B}(\vec{w})$, then (\ref{result}) is equal to
\begin{equation}\label{scalar}
    \left (\prod_{a=1}^{\ell(\phi(\widetilde{p}))}\sigma((\lambda,s_{u_{\widetilde{p},t}}\cdots s_{u_{\widetilde{p},a+1}} \alpha_{u_{\widetilde{p},a}})) \right)E_\mu^\enn.
\end{equation}
\end{theorem}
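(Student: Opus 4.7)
The plan is to start from $x^\lambda = E_\lambda^\enn$ (Proposition~\ref{monomial}), apply $\pn(\uptau_{w^\enn}^\vee)$ to obtain a nonzero scalar multiple of $E_\mu^\enn$ via Corollary~\ref{general int application}, and use the Ram--Yip formula (Theorem~\ref{ry main}) to expand this action as an explicit sum over alcove walks. First I transfer Theorem~\ref{ry main} (with $u=1$) from $\mathbb{H}_{Y,\textnormal{loc}}$ to $\mathbb{H}^{\enn}_{Y,\textnormal{loc}}$ via the isomorphism $\Psi^\enn$: since $\Psi^\enn$ sends $\uptau_w^\vee\mapsto\uptau_{w^\enn}^\vee$, $X^{\textnormal{wt}(p)}\mapsto X^{n\textnormal{wt}(p)}$, $T_{\phi(p)}\mapsto T_{\phi(p)}$, and $Y^{-\beta_j}\mapsto Y^{-\beta_j^\enn}$, the Ram--Yip identity transfers verbatim, expressing $\uptau_{w^\enn}^\vee$ as the corresponding sum with $n\textnormal{wt}(p)$ and $\beta_j^\enn$ in place of $\textnormal{wt}(p)$ and $\beta_j$.

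Then I evaluate $\pn$ of this expansion on $x^\lambda$, one walk at a time. Because $x^\lambda = E_\lambda^\enn$ is a joint $Y$-eigenvector (Theorem~\ref{characterization}), the factors $(1-Y^{-\beta_j^\enn})^{-1}$ and $Y^{-\beta_j^\enn}$ act by scalars involving $\gamma(-\beta_j^\enn;\lambda)$, with Lemma~\ref{gamma not zero}(2) guaranteeing that the denominators $1-\gamma(-\beta_j^\enn;\lambda)$ are nonzero. For the $T$-factor, I take the given reduced expression $\phi(p)=s_{u_{p,1}}\cdots s_{u_{p,t}}$ in $W_0$ and iterate Corollary~\ref{Ti on good monomials}: at the $a$-th step the relevant pairing equals $(\lambda,s_{u_{p,t}}\cdots s_{u_{p,a+1}}\alpha_{u_{p,a}})$, and since reducedness forces $s_{u_{p,t}}\cdots s_{u_{p,a+1}}\alpha_{u_{p,a}}\in\Phi_+$ while $\lambda\in A^\enn$ pairs to a value in $[0,n]$ with every positive root, the first case of the corollary applies at every step. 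This produces $\pn(T_{\phi(p)})x^\lambda=\prod_a \sigma((\lambda,s_{u_{p,t}}\cdots s_{u_{p,a+1}}\alpha_{u_{p,a}}))\cdot x^{\phi(p)\lambda}$. Finally $\pn(X^{n\textnormal{wt}(p)})$ multiplies by $x^{n\textnormal{wt}(p)}$. Assembling the pieces gives $\pn(\uptau_{w^\enn}^\vee)x^\lambda$ equal to the sum in (\ref{result}), which by Corollary~\ref{general int application} is a nonzero scalar multiple of $E_\mu^\enn$.

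To pin down the scalar, I compare coefficients of $x^\mu$. Each walk $p$ contributes only the single monomial $x^{n\textnormal{wt}(p)+\phi(p)\lambda}$, whose exponent is exactly the image $\Psi^\enn(\textnormal{end}(p))\lambda$ under the affine action. If $p$ has any fold, then $\textnormal{end}(p)$ is represented in $W_{\textnormal{Cox}}$ by a subword of $w=s_{i_1}\cdots s_{i_\ell}$ that omits at least one letter, so $\ell_{W_{\textnormal{Cox}}}(\textnormal{end}(p))<\ell$; since $\Psi^\enn$ is a Coxeter isomorphism (sending $s_i$ to $s_i^\enn$), this forces $\ell_{W_{\textnormal{Cox}}^\enn}(\Psi^\enn(\textnormal{end}(p)))<\ell=\ell(w^\enn)$, and the uniqueness of the minimal-length representative in Lemma~\ref{lambda and w} then forces $\Psi^\enn(\textnormal{end}(p))\lambda\ne\mu$. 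Thus only the unfolded walk $\widetilde{p}$ contributes to $x^\mu$, and since its positive- and negative-fold products are empty, its contribution is precisely (\ref{scalar}). The main obstacle is this last step: correctly identifying $\Psi^\enn$ as a Coxeter isomorphism and invoking the uniqueness clause in Lemma~\ref{lambda and w} are both essential, since without them several walks could in principle conspire to produce $x^\mu$.
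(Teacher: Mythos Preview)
Your proof is correct and follows essentially the same route as the paper: apply $\pn(\uptau_w^\vee)$ to $x^\lambda=E_\lambda^\enn$, expand via the Ram--Yip formula (transported through $\Psi^\enn$), evaluate the $Y$-factors as scalars on the eigenvector $x^\lambda$, compute $\pn(T_{\phi(p)})x^\lambda$ by iterating Corollary~\ref{Ti on good monomials} using reducedness of $\phi(p)$ and $\lambda\in A^\enn$, and identify the scalar by noting that only the unfolded walk hits $\mu$. Your justification of this last point (via the Coxeter isomorphism $\Psi^\enn$ and the minimality clause in Lemma~\ref{lambda and w}) is slightly more explicit than the paper's, but the argument is the same.
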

\begin{proof}
By Proposition~\ref{fundamental domain}, we have $E_\mu^\enn\sim \pn(\uptau_w^\vee) x^\lambda$. Applying Theorem~\ref{ry main}, we have
\begin{equation*}
    \sum_{p\in \mathcal{B}(\vec{w})}x^{n\textnormal{wt}(p)}\pn(T_{\phi(p)}) x^\lambda \left (\prod_{j\in f^+(p)}\dfrac{k^{-1}-k}{1-\gamma({-\beta_j^\enn};\lambda)} \right) \left(\prod_{j\in f^-(p)}\dfrac{(k^{-1}-k)\gamma({-\beta_j^\enn};\lambda)}{1-\gamma({-\beta_j^\enn};\lambda)} \right)\sim E_\mu^\enn.
\end{equation*}
Fix $p\in \mathcal{B}(\vec{w})$. We must only compute (dropping the dependence on $p$ in the notation on the right-hand side)
\begin{equation*}
    \pn(T_{\phi(p)}) x^\lambda=\pn(T_{s_{u_1}})\cdots \pn(T_{s_{u_t}})x^\lambda.
\end{equation*}
This is similar to the proof of Proposition~\ref{monomial}, although slightly less explicit. Since $\phi(p)=s_{u_1}\cdots s_{u_t}$ is a reduced expression, for $1\leq a\leq t$ we have
\begin{equation*}
    s_{u_t}\cdots s_{u_{a+1}}\alpha_{u_a} >0
\end{equation*}
(see, for instance, Section 1.7 of \cite{humphCox}). 
Since $\lambda$ is dominant, this implies that 
\begin{equation*}
    0\leq (\lambda,s_{u_t}\cdots s_{u_{a+1}}\alpha_{u_a} )\leq n,
\end{equation*}
or equivalently
\begin{equation*}
    0\leq (s_{u_{a+1}}\cdots s_{u_t}\lambda,\alpha_{u_a} )\leq n.
\end{equation*}
Then by Corollary~\ref{Ti on good monomials}, 
\begin{align*}
    \pn(T_{s_{u_1}})\cdots \pn(T_{s_{u_t}})x^\lambda
    &=\sigma((s_{u_2}\cdots s_{u_t}\lambda,\alpha_{u_1}))\cdots \sigma((\lambda,\alpha_{u_t}))x^{s_{u_1}\cdots s_{u_t}\lambda}
    \\&=\prod_{a=1}^{\ell(\phi(p))}\sigma((\lambda,s_{u_{t}}\cdots s_{u_{a+1}} \alpha_{u_{a}}))x^{\phi(p)(\lambda)}
\end{align*}
matching (\ref{result}). 

To obtain (\ref{scalar}), we observe that because $w^\enn$ is an element of $W^\enn_{\textnormal{Cox}}$ of shortest length such that $w^\enn\lambda=\mu$, and any alcove walk other than $\widetilde{p}$ is nontrivially folded and therefore corresponds to a proper subword of $w=(\Psi^\enn)^{-1}(w^\enn)$, $\widetilde{p}$ is the unique alcove walk $p\in\mathcal{B}(\vec{w})$ with
\[
    \Psi^\enn(\textnormal{end}(p))(\lambda)=\mu.
\]
\end{proof}

Let $\mu$, $\lambda$, and $w^\enn=s_{i_1}^\enn\cdots s_{i_\ell}^\enn$ be as in the theorem. Note that for any $p\in\mathcal{B}(\vec{w})$, 
\begin{equation*}
    \Psi^\enn(\textnormal{end}(p))\lambda=n\textnormal{wt}(p)+\phi(p)\lambda,
\end{equation*}
so 
\begin{equation*}
    \{\Psi^\enn(\textnormal{end}(p))\lambda: p\in\mathcal{B}(\vec{w})\}
\end{equation*}
is the set of powers of $x$ appearing in (\ref{result}). Further notice that for all $p\in \mathcal{B}(\vec{w})$, the elements $\Psi^\enn(\textnormal{end}(p))\lambda$ 
have the form
\begin{equation}\label{subword}
    s_{i_{j_1}}^\enn s_{i_{j_2}}^\enn\cdots s_{i_{j_b}}^\enn \lambda
\end{equation}
for $1\leq j_1<j_2\cdots <j_b\leq \ell$. (See Remark~\ref{alcove walk explanation}.) This leads us to define a partial order $\overset{ n}{\leq }$ on $\mathbb{Z}^r$ such that $\nu\overset{ n}{\leq } \mu$ if and only if $\nu$ has the form (\ref{subword}). 

Here is an equivalent characterization. Given $\mu,\nu\in\mathbb{Z}^r$, let $w_\mu^\enn, w_\nu^\enn\in W_{\textnormal{Cox}}^\enn$ and $\lambda_{\mu},\lambda_{\nu}\in A^\enn$ be defined by
\begin{equation}\label{mu and nu}
    w_\mu^\enn \lambda_{\mu}=\mu\textnormal{ and }w_\nu^\enn \lambda_{\nu}=\nu,
\end{equation}
where $w_\mu^\enn$ and $w_\nu^\enn$ are taken to be the unique shortest elements such that (\ref{mu and nu}) holds. Then $\nu\overset{ n}{\leq } \mu$ if and only if $\lambda_\nu=\lambda_\mu$ and $w_\nu^\enn\leq w_\mu^\enn$ in the Bruhat order on the Coxeter group $W_{\textnormal{Cox}}^\enn$. 
(Recall Definition~\ref{bruhat def} and Lemma~\ref{bruhat subwords}.) The fact that $\overset{ n}{\leq }$ is a partial order on $\mathbb{Z}^r$ follows from the fact that the Bruhat order is a partial order on $W_{\textnormal{Cox}}^\enn$. 

We now have the following result, which precisely characterizes the powers of $x$ appearing in $E_\mu^\enn$. 

\begin{corollary}\label{triangularity corollary}
For $\mu\in\mathbb{Z}^r$, 
\begin{equation}\label{triangular form}
    E_\mu^\enn =x^\mu+\sum_{\nu\overset{ n}{< }\mu} c_\nu x^{\nu}
\end{equation}
for some scalars $c_\nu\in\mathbb{F}^\enn$.
In particular, the family $\{E_\mu^\enn:\mu\in\mathbb{Z}^r\}$ is a basis for $\mathbb{F}^\enn[x^{\pm 1}]$ over $\mathbb{F}^\enn$.

Further, the scalars $c_\nu$ are all nonzero. 
\end{corollary}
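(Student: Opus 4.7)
The first assertion, that $E_\mu^\enn = x^\mu+\sum_{\nu\overset{ n}{< }\mu} c_\nu x^{\nu}$, is essentially read off from Theorem~\ref{main}. The monomials appearing in (\ref{result}) are $x^{n\textnormal{wt}(p)+\phi(p)\lambda} = x^{\Psi^\enn(\textnormal{end}(p))\lambda}$ as $p$ ranges over $\mathcal{B}(\vec{w})$, and as noted in the discussion immediately following Theorem~\ref{main}, each such exponent $\nu$ has the subword form $s_{i_{j_1}}^\enn\cdots s_{i_{j_b}}^\enn \lambda$, so $\nu \overset{ n}{\leq } \mu$. The unique unfolded walk $\widetilde p$ contributes $x^\mu$, and dividing out by the scalar prefactor (\ref{scalar}) normalizes the coefficient of $x^\mu$ in $E_\mu^\enn$ to $1$, which yields the triangular form with the remaining sum over $\nu \overset{ n}{< } \mu$.

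For the basis claim, observe that $\overset{ n}{< }$ is well-founded because it lifts the Bruhat order on the Coxeter group $W_{\textnormal{Cox}}^\enn$, whose lower sets $\{w':w'\leq w\}$ are finite. Unit-diagonal triangularity along a well-founded order lets us invert the change-of-basis recursively, expressing each $x^\mu$ as a finite $\mathbb{F}^\enn$-linear combination of $\{E_\nu^\enn:\nu\overset{ n}{\leq }\mu\}$, which gives spanning. Linear independence is immediate from the same triangularity: in any finite vanishing combination $\sum a_\mu E_\mu^\enn = 0$, a $\overset{ n}{< }$-maximal index $\mu_0$ forces $a_{\mu_0}=0$ (since $x^{\mu_0}$ cannot appear in any other $E_{\mu_i}^\enn$), and iterating trivializes the relation.

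The nonvanishing of the $c_\nu$ is the deep part of the corollary and is where I expect the real work. My plan is to induct on $\ell(w_\mu^\enn)$, with base case $w_\mu^\enn=1$ handled by Proposition~\ref{monomial}. For the inductive step, decompose $w_\mu^\enn = s_i^\enn w_{\mu'}^\enn$ reduced, so $\mu = s_i^\enn \mu'$ and $E_\mu^\enn \sim \pi^\enn(\uptau_i^\vee) E_{\mu'}^\enn$ by Proposition~\ref{intertwiner property}. Writing
\[
\pi^\enn(\uptau_i^\vee) = \pi^\enn(T_i^\vee) + \tfrac{k^{-1}-k}{1-\pi^\enn(Y^{-\alpha_i^\enn})}
\]
and combining the inductive hypothesis on $E_{\mu'}^\enn$ with the explicit representation formulas (\ref{Ti action})--(\ref{T0 action}), one obtains each $c_\nu$ as a finite sum of explicit nonzero rational functions, indexed by the $\nu' \overset{ n}{\leq } \mu'$ that feed into the monomial $x^\nu$ under the action. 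The main obstacle is that distinct $\nu'$ can contribute to the same $x^\nu$, and one must preclude full cancellation. The cleanest route I see is to pass to the $q\to 0$ specialization of Theorem~\ref{main} (to be carried out in Section~\ref{section: q limits}), where only alcove walks with all positive folds survive and the surviving weights become monomials in $k^{-1}-k$ and the $G^\enn_j$. Producing, for every $\nu \overset{ n}{< } \mu$, at least one all-positive-fold walk in $\mathcal{B}(\vec{w})$ ending at $\nu$ --- a Deodhar-type ``distinguished subword'' construction applied to $w_\nu^\enn \leq w_\mu^\enn$ --- and treating $k^{-1}-k$ and the $G^\enn_j$'s as algebraically independent then rules out cancellation. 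Constructing this distinguished walk in full generality is the technical heart of the argument.
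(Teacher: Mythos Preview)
Your treatment of the triangularity and basis claims is fine and matches the paper's approach.

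Your plan for the nonvanishing of the $c_\nu$, however, has a genuine gap. Passing to the $q\to 0$ limit discards all walks that have any negative fold, and it is \emph{not} true that every $\nu\overset{n}{<}\mu$ is reached by an all-positive-fold walk. Indeed, Corollary~\ref{main q limit} together with the dominant case of the subsequent corollary shows that when $\mu$ is dominant, $E_\mu^\enn(x;0,k)=x^\mu$: the only element of $\mathcal{B}^+(\vec w)$ is the unfolded walk. Yet for dominant $\mu\notin A^\enn$ there are many $\nu\overset{n}{<}\mu$ with $c_\nu\neq 0$ (see for instance $E_{(2,0,0)}^{(1)}$ in Appendix~\ref{appendix: nonsymmetric}). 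So the Deodhar-type construction you propose cannot produce an all-positive-fold walk for every such $\nu$, and your non-cancellation argument at $q=0$ collapses.

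The paper's argument is far simpler and avoids any combinatorics of distinguished subwords: specialize the parameters to real numbers with $0<k<1$, $G_s^\enn=1$ for $s\not\equiv 0\pmod n$, and $0<q<1$ small enough that each $\gamma(-\beta_j^\enn;\lambda)$ lies in $(0,1)$ (possible by Lemma~\ref{q power}). At this specialization every factor in every term of \eqref{result} is strictly positive, so the coefficient of $x^\nu$ is a sum of strictly positive reals, hence positive. Since there is at least one walk ending at $\nu$ (by the subword characterization of $\overset{n}{<}$ via Lemma~\ref{bruhat subwords}), $c_\nu$ is nonzero at this specialization and therefore nonzero as an element of $\mathbb{F}^\enn$.
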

\begin{proof}
The first statement, triangularity with respect to the order $\overset{ n}{\leq }$ on $\mathbb{Z}^r$, follows from Theorem~\ref{main} and the above discussion. 

To see that the scalars $c_\nu$ are all nonzero, we observe that the coefficients appearing in \eqref{result} are all strictly positive if the parameters are appropriately specialized. In particular, let $0<k<1$ and let $G_s^\enn =1$ for all $s\not\equiv 0\textnormal{ mod }n$. Choose $0<q<1$ sufficiently small that the finitely many expressions $\gamma(-\beta_1^\enn;\lambda),\dots, \gamma(-\beta_\ell^\enn;\lambda)$ satisfy
\begin{equation*}
    0<\gamma(-\beta_j^\enn;\lambda)<1.
\end{equation*}
Then every term in \eqref{result} is visibly positive. 
\end{proof}

Looking at examples of $E_\mu^\enn$ for fixed $\mu\in\mathbb{Z}^r$ and varying $n\geq 1$ (see Appendix~\ref{appendix: nonsymmetric}), one notices that the polynomials have fewer terms as $n$ increases. We now make this observation precise. First, we must prove some results about the partial order $\overset{ n}{\leq }$. 

We may explicitly characterize the partial order $\overset{ n}{\leq }$ as follows. This result is known for $n=1$ and follows similarly for the general case. We provide a proof from the perspective of alcove walks. 

\begin{lemma}\label{Lemma: ordering}
Let $\mu\in\mathbb{Z}^r$ and $\widehat{\alpha}\in\widetilde{\Phi}_+^\enn$. We have
\begin{equation}\label{reflection inequality}
    s_{\widehat{\alpha}}\mu \overset{ n}{< } \mu
\end{equation}
if and only if
\begin{equation}\label{positive side}
    \langle \widehat{\alpha},\mu\rangle < 0. 
\end{equation}
\end{lemma}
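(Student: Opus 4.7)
The plan is to establish the forward direction carefully and then derive the converse by symmetry. Throughout, write $\lambda=\lambda_\mu\in A^\enn$ and $w=w_\mu^\enn$, the unique shortest element of $W_{\textnormal{Cox}}^\enn$ with $w\lambda=\mu$, and let $W_\lambda$ denote the stabilizer of $\lambda$ in $W_{\textnormal{Cox}}^\enn$. Since $\lambda$ lies in the closure of the fundamental alcove for $W_{\textnormal{Cox}}^\enn$, standard Coxeter theory identifies $W_\lambda$ with the parabolic subgroup generated by those simple reflections $s_i^\enn$ with $\langle \alpha_i^\enn,\lambda\rangle=0$, and characterizes $w$ as the unique shortest representative of the coset $wW_\lambda$.

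For the forward direction, I would first iterate relation (\ref{self adjoint}) to obtain the self-adjointness $\langle w*\widehat{\gamma},v\rangle=\langle \widehat{\gamma},w^{-1}v\rangle$ for all $\widehat{\gamma}\in P_a^\enn$ and $v\in\mathbb{R}^r$. Applied with $\widehat{\gamma}=\widehat{\alpha}$ and $v=\lambda$, the hypothesis $\langle \widehat{\alpha},\mu\rangle<0$ becomes $\langle w^{-1}*\widehat{\alpha},\lambda\rangle<0$. Since $\lambda\in A^\enn$ satisfies $\langle \widehat{\beta},\lambda\rangle\geq 0$ for every $\widehat{\beta}\in\widetilde{\Phi}_+^\enn$, the nonzero root $w^{-1}*\widehat{\alpha}$ cannot be positive, hence must be negative. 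The standard reflection-length formula in affine Coxeter groups then gives $\ell(s_{\widehat{\alpha}}w)=\ell(w)-1$, and the Strong Exchange Condition (equivalently Lemma \ref{bruhat subwords}) yields $s_{\widehat{\alpha}}w<w$ in the Bruhat order on $W_{\textnormal{Cox}}^\enn$.

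Since $s_{\widehat{\alpha}}w\cdot\lambda=s_{\widehat{\alpha}}\mu$, the element $w_{s_{\widehat{\alpha}}\mu}^\enn$ is the minimum-length representative of the coset $(s_{\widehat{\alpha}}w)W_\lambda$. Deodhar's lifting property applied to the Bruhat inequality $s_{\widehat{\alpha}}w<w$ and the standard parabolic $W_\lambda$ then yields $w_{s_{\widehat{\alpha}}\mu}^\enn\leq w_\mu^\enn$; since $\langle \widehat{\alpha},\mu\rangle\neq 0$ forces $s_{\widehat{\alpha}}\mu\neq\mu$ and hence $w_{s_{\widehat{\alpha}}\mu}^\enn\neq w_\mu^\enn$, the inequality is strict, proving $s_{\widehat{\alpha}}\mu\overset{n}{<}\mu$. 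The converse is then automatic: if $\langle \widehat{\alpha},\mu\rangle=0$ then $s_{\widehat{\alpha}}\mu=\mu$, while if $\langle \widehat{\alpha},\mu\rangle>0$ then $\langle \widehat{\alpha},s_{\widehat{\alpha}}\mu\rangle=-\langle \widehat{\alpha},\mu\rangle<0$, so applying the already-proved forward direction to $s_{\widehat{\alpha}}\mu$ yields $\mu=s_{\widehat{\alpha}}(s_{\widehat{\alpha}}\mu)\overset{n}{<}s_{\widehat{\alpha}}\mu$, which by antisymmetry of the partial order precludes $s_{\widehat{\alpha}}\mu\overset{n}{<}\mu$.

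The main obstacle is the passage from the Bruhat inequality $s_{\widehat{\alpha}}w<w$ in $W_{\textnormal{Cox}}^\enn$ to the inequality $w_{s_{\widehat{\alpha}}\mu}^\enn<w_\mu^\enn$ between minimum-length coset representatives, which depends on $W_\lambda$ being a standard parabolic subgroup (via the classical fact that point stabilizers in the closed fundamental alcove are parabolic) together with Deodhar's lifting property for the Bruhat order; without these ingredients, one only knows that $s_{\widehat{\alpha}}w$ sends $\lambda$ to $s_{\widehat{\alpha}}\mu$ and not that it does so along a shortest path.
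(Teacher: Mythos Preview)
Your argument is correct, with one small inaccuracy: the claim $\ell(s_{\widehat{\alpha}}w)=\ell(w)-1$ holds only for simple reflections, whereas for an arbitrary reflection the length drops by some odd positive integer. What you actually need, and what follows from $w^{-1}*\widehat{\alpha}<0$, is the inequality $\ell(s_{\widehat{\alpha}}w)<\ell(w)$; the rest of your argument goes through unchanged.

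Your route is genuinely different from the paper's. The paper argues geometrically: it rescales to $\mathbb{R}^r$ via $\Psi^\enn$, takes a minimal unfolded alcove walk from the fundamental alcove to the alcove for $\mu$, observes that since the fundamental alcove lies on the nonnegative side of $\mathfrak{h}^{\widehat{\alpha}}$ while $\mu$ lies on the negative side the walk must cross $\mathfrak{h}^{\widehat{\alpha}}$, and then folds at that crossing to produce a strictly shorter walk ending at $s_{\widehat{\alpha}}w$. You instead work purely Coxeter-theoretically, deducing $w^{-1}*\widehat{\alpha}<0$ from the fact that $\lambda$ pairs nonnegatively with all positive affine roots, and invoking the standard inversion criterion.

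One observation that would streamline your write-up: the detour through Deodhar's lifting property is not needed. The partial order $\overset{n}{\leq}$ is \emph{defined} in the paper via subwords (equation~(\ref{subword})), and only afterwards rephrased in terms of minimum-length coset representatives. Once you have $s_{\widehat{\alpha}}w<w$ in Bruhat order, Lemma~\ref{bruhat subwords} realizes $s_{\widehat{\alpha}}w$ as a subword of the chosen reduced expression for $w$; since $(s_{\widehat{\alpha}}w)\lambda=s_{\widehat{\alpha}}\mu$, the subword definition gives $s_{\widehat{\alpha}}\mu\overset{n}{\leq}\mu$ directly, and strictness follows from $s_{\widehat{\alpha}}\mu\neq\mu$. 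Your careful treatment of the passage to minimum-length representatives is correct, but it solves a difficulty that the original definition already avoids.
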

\begin{proof}
First, we note that we only need to prove (\ref{positive side}) implies (\ref{reflection inequality}): if $\langle \widehat{\alpha},\mu\rangle=0$ then $s_{\widehat{\alpha}}\mu = \mu$, and if $\langle \widehat{\alpha},\mu\rangle>0$ then $\langle \widehat{\alpha},s_{\widehat{\alpha}}\mu\rangle<0$ by (\ref{self adjoint}). 

We will instead prove the corresponding result for $\mathbb{R}^r$. By abuse of notation, for $v\in\mathbb{R}^r$, let $\ell(v)$ be the length of a shortest alcove walk from $1$ to the alcove containing $v$. We will show that for $v\in \mathbb{R}^r$ and $\widehat{\alpha}\in\widetilde{\Phi}_+$,
\begin{equation}\label{positive side 2}
    \langle \widehat{\alpha}, v \rangle < 0 
\end{equation}
implies
\begin{equation}\label{length shorter}
    \ell(s_{\widehat{\alpha}}v)< \ell(v).
\end{equation}
The desired result will follow by taking $v\in \frac{1}{n}\mathbb{Z}^r\subseteq \mathbb{R}^r$ and applying the isomorphisms $\Psi^\enn$ to everything in sight (recall (\ref{psi compatibility})). 

Let $v$ and $\widehat{\alpha}$ be as above and assume (\ref{positive side 2}) holds. Let $p$ be an alcove walk from $1$ to the alcove containing $v$ of the shortest length $\ell(v)$. Note that, for $u$ in the fundamental alcove $1$, we have $\langle \widehat{\alpha},u\rangle \geq 0$ (recall the definition (\ref{fundamental alcove})), so the alcove walk $p$ must pass through the hyperplane $\mathfrak{h}^{\widehat{\alpha}}$ at some step $j$. Let $p'$ be the alcove walk obtained from $p$ by folding at step $j$ (recall the definition and discussion before Remark~\ref{alcove walk explanation}). Then $p'$ has a shorter length than $p$, and by (\ref{folding ends}),
\begin{equation*}
    \textnormal{end}(p')=s_{\widehat{\alpha}}\textnormal{end}(p)=s_{\widehat{\alpha}} w.
\end{equation*}
Then we have (\ref{length shorter}). 
\end{proof}

We now observe
\begin{proposition}\label{n implies m}
Let $m$ be a positive integer with $m|n$ and let $\mu,\nu\in\mathbb{Z}^r$. If $\nu\overset{ n}{\leq }\mu$, then $\nu\overset{ m}{\leq }\mu$. 
\end{proposition}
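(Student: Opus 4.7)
My plan is to pass through the reflection-chain form of the Bruhat order. The underlying combinatorial fact that makes everything work is the observation that, because $m\mid n$, every affine reflection arising from $\widetilde{\Phi}_+^\enn$ also arises from $\widetilde{\Phi}_+^{(m)}$ in a sign-compatible way.

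Concretely, I would first establish the following compatibility. Given $\widehat{\alpha} = n\alpha + s n^2 \delta \in \widetilde{\Phi}_+^\enn$ with $\alpha \in \Phi$, set
\[
  \widehat{\alpha}' := m\alpha + s(n/m)\, m^2\delta.
\]
Since $n/m$ is a positive integer, $s(n/m) \in \mathbb{Z}$ shares the sign of $s$, so $\widehat{\alpha}' \in \widetilde{\Phi}_+^{(m)}$. Formula (\ref{n reflection}) gives
$s_{\widehat{\alpha}'} v = s_\alpha v - s(n/m)m\alpha = s_\alpha v - sn\alpha = s_{\widehat{\alpha}} v$
for every $v \in \mathbb{R}^r$, so the two reflections agree. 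Moreover, by (\ref{angle bracket n}), $\langle \widehat{\alpha}', v \rangle = (m/n)\langle \widehat{\alpha}, v \rangle$, so the pairings share sign for every $v$.

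Given this, the plan is to express $\nu \overset{n}{\leq} \mu$ as a finite sequence
\[
  \mu = \mu_0,\, \mu_1,\, \ldots,\, \mu_k = \nu
\]
with $\mu_{i+1} = s_{\widehat{\beta}_i}\mu_i$ for some $\widehat{\beta}_i \in \widetilde{\Phi}_+^\enn$ satisfying $\langle \widehat{\beta}_i, \mu_i\rangle < 0$, then replace each $\widehat{\beta}_i$ by its counterpart $\widehat{\beta}_i' \in \widetilde{\Phi}_+^{(m)}$ supplied by the compatibility above, and invoke Lemma~\ref{Lemma: ordering} in the $m$ case for each step. This yields $\mu_{i+1} \overset{m}{<} \mu_i$ at every step, and transitivity of $\overset{m}{\leq}$ gives $\nu \overset{m}{\leq} \mu$.

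The main obstacle is producing the initial reflection chain. This follows from the standard reflection-chain characterization of the Bruhat order on the Coxeter group $W^\enn_{\textnormal{Cox}}$ (cf.\ \cite{humphCox}) applied to $w_\nu^\enn \leq w_\mu^\enn$, translated back to $\mathbb{Z}^r$ via the action on $\lambda_\mu^\enn$; Lemma~\ref{Lemma: ordering} then converts each length-decreasing step in $W^\enn_{\textnormal{Cox}}$ into the required sign condition $\langle \widehat{\beta}_i, \mu_i\rangle < 0$. The subtlety lies in the stabilizer of $\lambda_\mu^\enn$ in $W^\enn_{\textnormal{Cox}}$, which can keep intermediate chain elements from being shortest coset representatives; this is handled by standard minimal coset-representative arguments for parabolic subgroups, or alternatively by the folding/alcove-walk viewpoint used in the proof of Lemma~\ref{Lemma: ordering}.
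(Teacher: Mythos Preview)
Your proposal is correct and takes essentially the same approach as the paper: both reduce to a single reflection step $\nu = s_{\widehat{\alpha}}\mu$ with $\widehat{\alpha}\in\widetilde{\Phi}_+^\enn$, replace $\widehat{\alpha}=n\alpha+sn^2\delta$ by $m\alpha+s(n/m)m^2\delta\in\widetilde{\Phi}_+^{(m)}$, observe that the reflections coincide and the pairings with $\mu$ share sign, and then invoke Lemma~\ref{Lemma: ordering} at level $m$. The paper dispatches the reduction to the single-reflection case in one line (``We may reduce to the case\ldots''), whereas you spell out the reflection-chain reasoning and flag the stabilizer subtlety; this extra caution is reasonable but not something the paper dwells on.
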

\begin{proof}
We may reduce to the case $\nu=s_{\widehat{\alpha}}\mu$ for $\widehat{\alpha}\in\Phi^\enn_+$. Write 
\begin{equation*}
    \widehat{\alpha}=n\alpha+sn^2\delta
\end{equation*}
for $\alpha\in\Phi_+$ and $s\in\mathbb{Z}$, and let $c=\frac{n}{m}\in\mathbb{Z}$. 
By Lemma~\ref{Lemma: ordering}, we have 
\begin{equation*}
    0>\langle \widehat{\alpha},\mu\rangle=n(\alpha,\mu)+sn^2=cm(\alpha,\mu)+sc^2 m^2=c\langle m\alpha+scm^2\delta,\mu\rangle.
\end{equation*}
Dividing by the positive integer $c$, we see that
\begin{equation*}
    \langle m\alpha+scm^2\delta,\mu\rangle<0.
\end{equation*}
Since $\widehat{\alpha}$ is a positive root in $\widetilde{\Phi}^\enn$, we see that $m\alpha+scm^2\delta$ is a positive root in $\widetilde{\Phi}^{(m)}$. Further, 
\begin{equation*}
    s_{\widehat{\alpha}}=s_{m\alpha+scm^2\delta}.
\end{equation*}
Then by Lemma~\ref{Lemma: ordering}, 
\begin{equation*}
    \nu=s_{m\alpha+scm^2\delta}\mu \overset{ m}{\leq }\mu
\end{equation*}
\end{proof}




By Corollary~\ref{triangularity corollary} and Proposition~\ref{n implies m}, we have

\begin{corollary}\label{macdonald comparison}
Let $m$ be a positive integer with $m|n$ and let $\mu,\nu\in\mathbb{Z}^r$. 
If $x^\nu$ appears with nonzero coefficient in $E_\mu^\enn$, then $x^\nu$ appears with nonzero coefficient in $E_\mu^{(m)}$. In particular, this holds for the case $m=1$, where $E_\mu^{(1)}=E_\mu$ is the Macdonald polynomial. 
\end{corollary}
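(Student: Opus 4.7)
The proof is essentially a one-line assembly of the two immediately preceding results, so the plan is just to spell out the chain of implications cleanly. First I would apply Corollary~\ref{triangularity corollary} to $E_\mu^\enn$, which guarantees an expansion
\[
E_\mu^\enn = x^\mu + \sum_{\nu \overset{n}{<} \mu} c_\nu\, x^\nu
\]
with \emph{every} $c_\nu$ nonzero. Consequently, the hypothesis that $x^\nu$ appears with nonzero coefficient in $E_\mu^\enn$ is equivalent to the combinatorial condition $\nu \overset{n}{\leq} \mu$ (the case $\nu=\mu$ contributing the leading monomial, and $\nu \overset{n}{<}\mu$ covered by the nonvanishing of $c_\nu$).

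Next I would invoke Proposition~\ref{n implies m} with the given divisor $m \mid n$ to upgrade this to $\nu \overset{m}{\leq} \mu$. Finally, I would apply Corollary~\ref{triangularity corollary} a second time, now to $E_\mu^{(m)}$, obtaining a similar triangular expansion with respect to $\overset{m}{\leq}$ and with all coefficients again nonzero; thus $\nu \overset{m}{\leq} \mu$ forces $x^\nu$ to appear with nonzero coefficient in $E_\mu^{(m)}$. Specializing to $m=1$ recovers the statement about the Macdonald polynomial $E_\mu^{(1)}=E_\mu$.

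There is no genuine obstacle to overcome at this stage: all the substantive work has already been discharged by the two inputs. In particular, the nontrivial content sits inside (i) the nonvanishing of the coefficients $c_\nu$ in Corollary~\ref{triangularity corollary}, which was handled by the explicit positivity specialization $0<k<1$, $G_s^\enn=1$ for $s\not\equiv 0 \pmod n$, and $q$ sufficiently small so that each $\gamma(-\beta_j^\enn;\lambda)\in(0,1)$; and (ii) Proposition~\ref{n implies m}, which reduced to a single reflection $s_{\widehat{\alpha}}$ and a division of the strict inequality $\langle \widehat{\alpha},\mu\rangle<0$ by the positive integer $c=n/m$, keeping the sign intact so that Lemma~\ref{Lemma: ordering} could be reapplied at level $m$. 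The corollary itself is just the bookkeeping that stitches these two facts together.
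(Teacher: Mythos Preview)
Your proof is correct and matches the paper's own argument exactly: the paper simply states that the corollary follows from Corollary~\ref{triangularity corollary} and Proposition~\ref{n implies m}, and you have spelled out precisely that chain of implications.
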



Finally, for $u\in W_0$ and $\mu\in\mathbb{Z}^r$, we record an alcove walk formula for $T_u E_\mu^\enn$. These polynomials are the natural generalization of the permuted basement nonsymmetric Macdonald polynomials of \cite{permutedBasement2, permutedBasement}. The proof is the same as the proof of Theorem~\ref{main}, except we apply Theorem~\ref{ry main} to $T_u \tau_w^\vee$ rather than $\tau_w^\vee$. 

\begin{theorem}\label{permuted basement}
Let all notation be as in Theorem~\ref{main}, and let $u\in W_0$. Then
\begin{align*}
    T_u E_\mu^\enn\sim \pi^\enn(T_u \tau_w^\vee) x^\lambda =&\sum_{p\in \mathcal{B}(u,\vec{w})}  x^{n\textnormal{wt}(p)+\phi(p)\lambda}\left (\prod_{a=1}^{\ell(\phi(p))}\sigma((\lambda,s_{u_{p,t}}\cdots s_{u_{p,a+1}} \alpha_{u_{p,a}})) \right)\cdot \notag
    \\& \cdot\left (\prod_{j\in f^+(p)}\dfrac{k^{-1}-k}{1-\gamma({-\beta_j^\enn};\lambda)} \right) \left(\prod_{j\in f^-(p)}\dfrac{(k^{-1}-k)\gamma({-\beta_j^\enn};\lambda)}{1-\gamma({-\beta_j^\enn};\lambda)} \right).
\end{align*}
\end{theorem}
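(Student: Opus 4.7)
The plan is to mimic the proof of Theorem~\ref{main}, inserting $T_u$ at the start and reusing every subsequent step essentially verbatim. Start from the chain $E_\mu^\enn \sim \pi^\enn(\uptau_w^\vee) x^\lambda$ given by Proposition~\ref{fundamental domain}. Since $\pi^\enn(T_u)$ is a $\mathbb{F}^\enn$-linear operator, the proportionality is preserved under its application, giving
\[T_u E_\mu^\enn = \pi^\enn(T_u) E_\mu^\enn \sim \pi^\enn(T_u \uptau_w^\vee) x^\lambda.\]
Thus the problem reduces to expanding $\pi^\enn(T_u \uptau_w^\vee) x^\lambda$ as an alcove walk sum.

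Next, I would transport the Ram--Yip formula of Theorem~\ref{ry main} through the algebra isomorphism $\Psi^\enn \colon \mathbb{H}_{Y,loc} \to \mathbb{H}_{Y,loc}^\enn$ to obtain, in $\mathbb{H}_{Y,loc}^\enn$,
\[T_u \uptau_w^\vee = \sum_{p \in \mathcal{B}(u,\vec{w})} X^{n\,\mathrm{wt}(p)} T_{\phi(p)} \left(\prod_{j \in f^+(p)} \frac{k^{-1}-k}{1 - Y^{-\beta_j^\enn}}\right) \left(\prod_{j \in f^-(p)} \frac{(k^{-1}-k)Y^{-\beta_j^\enn}}{1 - Y^{-\beta_j^\enn}}\right).\]
Applying $\pi^\enn$ to $x^\lambda = E_\lambda^\enn$ (Proposition~\ref{monomial}), each $Y^{-\beta_j^\enn}$ acts by its eigenvalue $\gamma(-\beta_j^\enn;\lambda)$, directly yielding the $q$-dependent factors in the theorem; invertibility of the denominators $1 - \gamma(-\beta_j^\enn;\lambda)$ is guaranteed by Lemma~\ref{gamma not zero} together with Lemma~\ref{q power}.

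What remains is to evaluate $\pi^\enn(T_{\phi(p)}) x^\lambda$ for each $p \in \mathcal{B}(u,\vec{w})$, and this is where the computation from the proof of Theorem~\ref{main} can be quoted wholesale. Given a reduced expression $\phi(p) = s_{u_{p,1}} \cdots s_{u_{p,t}}$, the standard fact that $s_{u_{p,t}} \cdots s_{u_{p,a+1}} \alpha_{u_{p,a}} \in \Phi_+$ combined with $\lambda \in A^\enn$ forces
\[0 \leq (\lambda,\, s_{u_{p,t}} \cdots s_{u_{p,a+1}} \alpha_{u_{p,a}}) \leq n\]
at every stage, so Corollary~\ref{Ti on good monomials} applies step by step and produces the product of $\sigma$-factors along with the monomial $x^{\phi(p)\lambda}$. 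Multiplying by the already-identified $X^{n\,\mathrm{wt}(p)}$ and assembling yields the claimed formula. There is no serious obstacle here: the change from $\mathcal{B}(\vec{w})$ to $\mathcal{B}(u,\vec{w})$ is absorbed entirely into Theorem~\ref{ry main}, and the internal computation of $\pi^\enn(T_{\phi(p)}) x^\lambda$ depends only on the finite-Weyl-group element $\phi(p)$ and on $\lambda$, not on the starting alcove, so the argument for Theorem~\ref{main} carries over without modification.
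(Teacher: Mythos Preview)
Your proposal is correct and matches the paper's own approach: the paper states that the proof is the same as that of Theorem~\ref{main}, except one applies Theorem~\ref{ry main} to $T_u \uptau_w^\vee$ rather than to $\uptau_w^\vee$. Your write-up spells out precisely those steps, including the key observation that the evaluation of $\pi^\enn(T_{\phi(p)})x^\lambda$ depends only on $\phi(p)$ and $\lambda$ and hence carries over verbatim.
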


Note that the only difference between Theorem~\ref{main} and the slightly more general Theorem~\ref{permuted basement} is that we sum over $\mathcal{B}(u,\vec{w})$ rather than $\mathcal{B}(\vec{w})=\mathcal{B}(1,\vec{w}).$ (See Section~\ref{section: alcove walks} for the relevant definitions.) 

\section{Symmetric SSV polynomials}\label{section: symmetric}

We will now construct a generalization of the symmetric Macdonald polynomials, establish some basic properties, and give a combinatorial formula. In general, these polynomials will \emph{not} by symmetric with respect to the usual action of $W_0$, but rather (a conjugate of) the Weyl group action of Chinta and Gunnells, which depends on the metaplectic degree $n$: see \cite{cg1,cg2} for details. 

Consider the Hecke symmetrizer
\begin{equation*}
    U=\sum_{u\in W_0}k^{\ell(u)}T_u\in\mathbb{H}^\enn.
\end{equation*}
For convenience, we will also denote $\pn(U)=U$. 
In the case $n=1$, for $\lambda\in\mathbb{Z}^r$ dominant and any $\mu\in W_0 \lambda$, $UE_\mu$ is a nonzero scalar multiple of the symmetric Macdonald polynomial $P_\lambda$. (See \cite{mac2}.) This motivates the following definition. For $\mu\in\mathbb{Z}^r$, let
\begin{equation*}
    P_\mu^\enn=U E_\mu^\enn.
\end{equation*}
(Note that for $\lambda$ dominant, we may have $P_\lambda^{(1)}\neq P_\lambda$ simply because of normalization: we have not required that the coefficient of $x^\mu$ be $1$, since we have not proven at this point that there is a nonzero $x^\mu$ term.) 

\begin{lemma}
Let $f(Y)$ be a symmetric Laurent polynomial in $Y^{n\epsilon_1},\dots, Y^{n\epsilon_{r}}$. Then for $1\leq i\leq r-1$,
\begin{equation*}
    f(Y)T_i=T_if(Y).
\end{equation*}
In particular, $f(Y)$ commutes with $U$. 
\end{lemma}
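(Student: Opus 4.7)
The plan is to first establish the identity $T_i f(Y) = f(Y) T_i$ for each $1 \leq i \leq r-1$, and then deduce the ``in particular'' assertion at once.

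The key ingredient is the Bernstein--Zelevinsky cross relation for $T_i^\vee = T_i$ (noting that $s_i^\enn = s_i$ for $1 \leq i \leq r-1$), which when rearranged gives, for any $\widehat{\mu}\in P_a^\enn$,
\begin{equation*}
T_i Y^{\widehat{\mu}} = Y^{s_i\cdot \widehat{\mu}} T_i + (k-k^{-1})\,\frac{Y^{\widehat{\mu}} - Y^{s_i\cdot \widehat{\mu}}}{1 - Y^{-\alpha_i^\enn}}.
\end{equation*}
I would then write $f(Y) = \sum_{\widehat{\mu}\in nP} c_{\widehat{\mu}} Y^{\widehat{\mu}}$, where only finitely many $c_{\widehat{\mu}}$ are nonzero. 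The hypothesis that $f(Y)$ is symmetric in $Y^{n\epsilon_1}, \dots, Y^{n\epsilon_r}$ translates directly into $c_{\widehat{\mu}} = c_{s_i\cdot\widehat{\mu}}$ for all $\widehat{\mu}\in nP$ and all $1\leq i\leq r-1$, since $s_i$ simply swaps $n\epsilon_i$ and $n\epsilon_{i+1}$.

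Summing the cross relations and splitting into the ``straight'' and ``correction'' pieces,
\begin{equation*}
T_i f(Y) = \Bigl(\sum_{\widehat{\mu}} c_{\widehat{\mu}} Y^{s_i\cdot \widehat{\mu}}\Bigr) T_i \;+\; (k-k^{-1}) \sum_{\widehat{\mu}} c_{\widehat{\mu}}\,\frac{Y^{\widehat{\mu}} - Y^{s_i\cdot \widehat{\mu}}}{1 - Y^{-\alpha_i^\enn}}.
\end{equation*}
The first sum equals $f(Y) T_i$ after the reindexing $\widehat{\mu}\mapsto s_i\cdot\widehat{\mu}$ combined with the symmetry $c_{\widehat{\mu}}=c_{s_i\cdot\widehat{\mu}}$. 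For the second sum I would pair each term $\widehat{\mu}$ with $s_i\cdot\widehat{\mu}$: the two contributions within a pair cancel exactly because of the symmetry, while for $s_i$-fixed $\widehat{\mu}$ the numerator is already zero, so no division by $1-Y^{-\alpha_i^\enn}$ can cause trouble. This yields $T_i f(Y) = f(Y) T_i$.

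For the ``in particular'' claim, I note that for $u\in W_0$ with reduced expression $u = s_{i_1}\cdots s_{i_{\ell(u)}}$ one has $T_u = T_{i_1}\cdots T_{i_{\ell(u)}}$, all of whose factors commute with $f(Y)$ by the first part; hence $f(Y)$ commutes with each $T_u$ and therefore with $U=\sum_{u\in W_0}k^{\ell(u)} T_u$. There is no real obstacle in this argument; the only minor point requiring care is verifying that the ``correction'' term is an honest element of $\mathbb{H}^\enn$ (and not merely a rational expression) when $s_i\cdot\widehat{\mu}=\widehat{\mu}$, which is immediate from the vanishing of the numerator.
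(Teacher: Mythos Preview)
Your proof is correct and follows essentially the same approach as the paper: apply the Bernstein--Zelevinsky cross relation term by term, sum, and use the $W_0$-symmetry of $f$ to identify $(s_i f)(Y)=f(Y)$, which simultaneously makes the first sum equal $f(Y)T_i$ and kills the correction term. The paper compresses this into the single line $T_i f(Y)-(s_i f)(Y)T_i=(k-k^{-1})\bigl(f(Y)-(s_i f)(Y)\bigr)/(1-Y^{-\alpha_i^\enn})=0$ and omits the (trivial) deduction for $U$, but the content is the same.
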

\begin{proof}
Recall the Bernstein-Zelevinsky relations (\ref{BZ Y}). Summing over all $Y^\mu$ appearing in $f(Y)$, we get
\begin{equation*}
    T_i f(Y)-(s_i f)(Y)T_i=(k-k^{-1})\left( \dfrac{f(Y)-(s_i f)(Y)}{1-Y^{-\alpha_i^\enn}}  \right)=0
\end{equation*}
by the symmetry of $f$. 
\end{proof}

Then we immediately have
\begin{corollary}
For any $\mu\in\mathbb{Z}^r$ and any symmetric Laurent polynomial $f(Y)$ in $Y^{n\epsilon_1},\dots, Y^{n\epsilon_{r}}$, 
$P_\mu$ and $E_\mu$ are eigenfunctions of $f(Y)$ with the same eigenvalue. 
\end{corollary}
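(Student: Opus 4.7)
The plan is to combine the previous lemma with the $Y$-eigenvalue characterization of $E_\mu^\enn$ provided by Theorem~\ref{characterization}. Since $f(Y)$ is a Laurent polynomial in $Y^{n\epsilon_1},\dots,Y^{n\epsilon_r}$, I would first expand
\[
    f(Y)=\sum_{\nu}c_\nu Y^{\nu},
\]
where each $\nu\in nP\subseteq P_a^\enn$, and the coefficients $c_\nu\in\mathbb{F}^\enn$ are symmetric under the action of $W_0$ on the exponents. Each $Y^\nu$ is then a legitimate element of $\mathbb{H}^\enn$ acting on $\mathbb{F}^\enn[x^{\pm 1}]$ via $\pi^\enn$.

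Next, by Theorem~\ref{characterization}, $\pi^\enn(Y^\nu)E_\mu^\enn=\gamma(\nu;\mu)E_\mu^\enn$ for every $\nu\in nP$. Summing against the coefficients $c_\nu$, it follows immediately that
\[
    \pi^\enn(f(Y))E_\mu^\enn = c_f(\mu)\,E_\mu^\enn, \qquad \text{where } c_f(\mu):=\sum_\nu c_\nu \gamma(\nu;\mu)\in\mathbb{F}^\enn.
\]
This gives the eigenvalue of $f(Y)$ on $E_\mu^\enn$.

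Finally, to transfer the eigenvalue to $P_\mu^\enn=U E_\mu^\enn$, I would apply the previous lemma, which says that $\pi^\enn(f(Y))$ commutes with each $\pi^\enn(T_i)$ (and hence with the operator $U$). Thus
\[
    \pi^\enn(f(Y))P_\mu^\enn = \pi^\enn(f(Y))\,U E_\mu^\enn = U\,\pi^\enn(f(Y))E_\mu^\enn = c_f(\mu)\,U E_\mu^\enn = c_f(\mu)\,P_\mu^\enn,
\]
so $P_\mu^\enn$ is an eigenfunction of $\pi^\enn(f(Y))$ with the same eigenvalue $c_f(\mu)$ as $E_\mu^\enn$. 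There is no real obstacle here: the argument is essentially bookkeeping, and the content lies entirely in the preceding lemma together with the $Y$-diagonalization of $E_\mu^\enn$. The only point worth flagging is that the statement is vacuous on $P_\mu^\enn$ if $P_\mu^\enn=0$, but the eigenvalue identity remains valid in that degenerate case.
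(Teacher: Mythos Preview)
Your proof is correct and follows exactly the approach the paper intends: the paper states this corollary with the phrase ``Then we immediately have,'' relying precisely on the commutation lemma plus the fact that $E_\mu^\enn$ is a $Y$-eigenfunction and $P_\mu^\enn=UE_\mu^\enn$. You have simply spelled out the immediate argument in detail.
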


The proof of the following lemma is simple and can be found in \cite{mac2}. 

\begin{lemma}\label{T is k}
For $1\leq i\leq r-1$, 
\begin{equation*}
    (T_i-k) U=U(T_i-k)=0.
\end{equation*}
\end{lemma}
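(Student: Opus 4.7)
The plan is a standard pairing argument for the Hecke symmetrizer. Fix $i$ with $1 \leq i \leq r-1$. I will partition $W_0 = S_r$ into pairs $\{v, s_i v\}$, with the convention that $\ell(s_i v) = \ell(v) + 1$ within each pair (i.e. $s_i v > v$ in the Bruhat order on $W_0$). The expression $U = \sum_{u \in W_0} k^{\ell(u)} T_u$ then decomposes as a sum over these pairs of the contribution $k^{\ell(v)} T_v + k^{\ell(v) + 1} T_{s_i v}$.

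For such a pair, the braid-compatible multiplication gives $T_i T_v = T_{s_i v}$ (since $s_i v > v$), and hence, using the Hecke relation $T_i^2 = (k - k^{-1}) T_i + 1$,
\[
T_i T_{s_i v} = T_i^2 T_v = (k - k^{-1}) T_{s_i v} + T_v.
\]
Combining,
\[
T_i \bigl( k^{\ell(v)} T_v + k^{\ell(v)+1} T_{s_i v} \bigr) = k^{\ell(v)+1} T_v + k^{\ell(v)+2} T_{s_i v} = k \bigl( k^{\ell(v)} T_v + k^{\ell(v)+1} T_{s_i v} \bigr),
\]
where in the middle step the coefficient of $T_{s_i v}$ simplifies as $k^{\ell(v)} + k^{\ell(v)+1}(k - k^{-1}) = k^{\ell(v)+2}$. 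Summing over all pairs yields $T_i U = kU$, i.e. $(T_i - k) U = 0$.

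The identity $U(T_i - k) = 0$ follows by an entirely symmetric argument: partition $W_0$ into right cosets $\{v, v s_i\}$ with $\ell(v s_i) = \ell(v) + 1$, use $T_v T_i = T_{v s_i}$ and $T_{v s_i} T_i = (k - k^{-1}) T_{v s_i} + T_v$, and collect terms as above.

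There is no real obstacle here; the only thing to be careful about is the bookkeeping in the length-change convention so that every $u \in W_0$ is counted exactly once in the pairing, and that the Hecke relation is applied in the correct direction. Since the authors cite \cite{mac2} for this well-known computation, I would present the proof as a brief verification rather than a lengthy derivation.
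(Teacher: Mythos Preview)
Your proof is correct and is the standard pairing argument; the paper does not actually give a proof of this lemma but simply cites \cite{mac2}, where essentially this same computation appears.
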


Following the arguments in \cite{mac2}, we can also prove:

\begin{proposition}
Let $1\leq i \leq r-1$ and $\mu\in \mathbb{Z}^r$. Then 
\begin{equation*}
    P_{s_i^\enn \mu}^\enn\sim P_\mu^\enn. 
\end{equation*}
\end{proposition}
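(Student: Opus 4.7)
The plan is to reduce the claim to the intertwiner relation of Proposition~\ref{intertwiner property S} and then exploit the absorption identity $UT_i=kU$ from Lemma~\ref{T is k}. Since $1\le i\le r-1$, the generator $s_i^\enn$ equals $s_i$ and acts on $\mu\in\mathbb{Z}^r$ in the usual way. If $s_i\mu=\mu$ there is nothing to prove, so assume $s_i\mu\neq\mu$. Then Proposition~\ref{intertwiner property S} gives
\[
E_{s_i\mu}^\enn\sim \pi^\enn(S_i)E_\mu^\enn,\qquad S_i=T_i(1-Y^{-\alpha_i^\enn})+(k^{-1}-k).
\]
Applying $U$ to both sides yields $P_{s_i\mu}^\enn\sim \pi^\enn(US_i)E_\mu^\enn$, so it suffices to show that $\pi^\enn(US_i)E_\mu^\enn$ is a \emph{nonzero} scalar multiple of $P_\mu^\enn$.

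To do this I would simplify $\pi^\enn(US_i)E_\mu^\enn$ in two moves. First, by Theorem~\ref{characterization} the operator $Y^{-\alpha_i^\enn}$ acts on $E_\mu^\enn$ as the scalar $\gamma(-\alpha_i^\enn;\mu)$, so
\[
\pi^\enn(S_i)E_\mu^\enn=(1-\gamma(-\alpha_i^\enn;\mu))\,T_iE_\mu^\enn+(k^{-1}-k)E_\mu^\enn.
\]
Second, applying $U$ and invoking Lemma~\ref{T is k} (which gives $UT_i=kU$) collapses the right-hand side to
\[
\pi^\enn(US_i)E_\mu^\enn=\bigl(k(1-\gamma(-\alpha_i^\enn;\mu))+(k^{-1}-k)\bigr)P_\mu^\enn=\bigl(k^{-1}-k\,\gamma(-\alpha_i^\enn;\mu)\bigr)P_\mu^\enn.
\]

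The main obstacle — and really the only delicate point — is verifying that the scalar $c:=k^{-1}-k\,\gamma(-\alpha_i^\enn;\mu)$ is nonzero, so that the proportionality above is nondegenerate. This mirrors the argument at the end of the proof of Proposition~\ref{intertwiner property S}: because $s_i\mu\neq\mu$, we have $(\alpha_i,\mu)\neq 0$, and inspection of the formula~(\ref{gamma clear q}) for $\gamma$ shows that $\gamma(-\alpha_i^\enn;\mu)$ carries a strictly nonzero power of $q$. Hence it cannot equal the $q$-free constant $k^{-2}$, so $c\neq 0$ and $P_{s_i^\enn\mu}^\enn\sim P_\mu^\enn$ as required.
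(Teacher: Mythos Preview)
Your proof is correct and follows essentially the same approach as the paper: use the intertwiner relation, apply $U$, invoke $UT_i=kU$, and verify the resulting scalar is nonzero via the $q$-power argument. The only cosmetic difference is that the paper works with the localized intertwiner $\uptau_i^\vee=T_i+\frac{k^{-1}-k}{1-Y^{-\alpha_i^\enn}}$ rather than $S_i$, which leads to the equivalent nonvanishing condition $\frac{k^{-1}-k}{1-\gamma(-\alpha_i^\enn;\mu)}\neq -k$; both reduce to $\gamma(-\alpha_i^\enn;\mu)\neq k^{-2}$.
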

\begin{proof}
If $s_i^\enn \mu=\mu$, then there is nothing to prove, so let us assume that 
\begin{equation*}
    s_i^\enn \mu\neq \mu.
\end{equation*}
By Proposition~\ref{intertwiner property}, for some nonzero scalars $c$ and $c'$, 
\begin{equation*}
    cE_{s_i^\enn \mu}^\enn=\pn(\uptau_i^\vee) E_\mu^\enn=(\pn(T_i)+c')E_\mu^\enn,
\end{equation*}
so
\begin{equation*}
    \pn(T_i) E_\mu^\enn=cE_{s_i^\enn \mu}^\enn-c' E_\mu^\enn.
\end{equation*}
Then
\begin{align*}
    k P_\mu^\enn &=k U E_\mu^\enn 
    \\&= U k E_\mu^\enn 
    \\&= U \pn(T_i) E_\mu^\enn \textnormal{(by (\ref{T is k}))}
    \\&=U(cE_{s_i^\enn \mu}^\enn-c' E_\mu^\enn)
    \\&=c P_{s_i^\enn \mu}^\enn -c' P_\mu^\enn, 
\end{align*}
implying
\begin{equation*}
    c^{-1}(c'+k)P_\mu^\enn=P_{s_i^\enn \mu}^\enn.
\end{equation*}

We must show that the scalar $c^{-1}(c'+k)$ is nonzero. Recalling the definition (\ref{intertwiner def}) of $\uptau_i^\vee$, we see that
\begin{equation}\label{c'}
    c'=\dfrac{k^{-1}-k}{1-\gamma({-n\alpha_i};\mu)}.
\end{equation}
Since $s_i^\enn \mu\neq \mu$, by (\ref{gamma clear q}) we have
\[ \gamma({-n\alpha_i};\mu)=\gamma({n\alpha_i};\mu)^{-1}\neq k^{-2} \]
(just as in the proof of Proposition~\ref{intertwiner property}). Then
(\ref{c'}) is not equal to $-k$ and $c^{-1}(c'+k)\neq 0$.
\end{proof}
Therefore, it suffices to consider $P_\mu^\enn$ where $\mu$ is dominant. 

By \cite{ssv} Theorem 3.21 and Proposition 4.2, there exists a representation 
\begin{equation*}
    \overline{\sigma}:W_0\rightarrow \textnormal{GL}(\mathbb{F}^\enn [x^{\pm 1}])
\end{equation*}
of $W_0$ such that for $1\leq i\leq r-1$ and $\mu\in\mathbb{Z}^r$,
\begin{equation}\label{CG property}
    (\pi^\enn(T_i)-k)x^\mu=k^{-1}\dfrac{1-k^2 x^{n\alpha_i}}{1-x^{n\alpha_i}}(\overline{\sigma}(s_i)-1)x^\mu.
\end{equation}
Specifically, in the notation of \cite{ssv}, $\overline{\sigma}$ is defined by
\begin{equation*}
    \overline{\sigma}(s_i)=x^{\rho^n-\rho}\sigma(s_i)x^{\rho-\rho^n}.
\end{equation*}
The map $\sigma$ in the notation of \cite{ssv} is the Weyl group action of Chinta and Gunnells \cite{cg1,cg2}. 

\begin{proposition}
For $\mu\in\mathbb{Z}^r$ dominant, $P_\mu^\enn$ is symmetric with respect to the representation $\overline{\sigma}$: for $1\leq i\leq r-1$,
\begin{equation*}
    \overline{\sigma} (s_i)P_\mu^\enn=P_\mu^\enn.
\end{equation*}
\end{proposition}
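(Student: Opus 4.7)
The plan is to apply equation (\ref{CG property}) to $P_\mu^\enn$ and show the left-hand side vanishes, which will then force $(\overline{\sigma}(s_i)-1)P_\mu^\enn = 0$ after dividing by a nonzero rational factor.

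First, since both sides of (\ref{CG property}) are $\mathbb{F}^\enn$-linear in the monomial $x^\mu$, the identity extends immediately to arbitrary Laurent polynomials $f \in \mathbb{F}^\enn[x^{\pm 1}]$:
\[
(\pi^\enn(T_i)-k)f \;=\; k^{-1}\,\frac{1-k^2 x^{n\alpha_i}}{1-x^{n\alpha_i}}\,(\overline{\sigma}(s_i)-1)f.
\]
I would then specialize this to $f=P_\mu^\enn=UE_\mu^\enn$. By Lemma~\ref{T is k}, $(T_i-k)U=0$ in $\mathbb{H}^\enn$, so applying $\pi^\enn$ gives
\[
(\pi^\enn(T_i)-k)P_\mu^\enn \;=\; \pi^\enn((T_i-k)U)E_\mu^\enn \;=\; 0.
\]

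Combining these two displays,
\[
k^{-1}\,\frac{1-k^2 x^{n\alpha_i}}{1-x^{n\alpha_i}}\,(\overline{\sigma}(s_i)-1)P_\mu^\enn \;=\; 0.
\]
The rational function $k^{-1}(1-k^2 x^{n\alpha_i})/(1-x^{n\alpha_i})$ is a nonzero element of the field $\mathbb{F}^\enn(x)$, so we may cancel it to conclude that $(\overline{\sigma}(s_i)-1)P_\mu^\enn = 0$ as an element of $\mathbb{F}^\enn(x)$; since it is in fact a Laurent polynomial (because $\overline{\sigma}(s_i)$ preserves $\mathbb{F}^\enn[x^{\pm 1}]$), it vanishes there as well, giving $\overline{\sigma}(s_i)P_\mu^\enn = P_\mu^\enn$.

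There is no substantive obstacle here: the dominance hypothesis on $\mu$ is not actually used in this argument (it only serves to single out a canonical orbit representative, since by the previous proposition $P_\mu^\enn$ depends on $\mu$ only up to scalar within its orbit). The one point requiring a brief justification is the cancellation step, which I would phrase in the language of the field of rational functions $\mathbb{F}^\enn(x_1,\dots,x_r)$ to avoid any ambiguity about multiplying Laurent polynomials by rational functions with poles on $x^{n\alpha_i}=1$; the output $(\overline{\sigma}(s_i)-1)P_\mu^\enn$ being a Laurent polynomial (by the very definition of the Chinta--Gunnells-type action $\overline{\sigma}$ on $\mathbb{F}^\enn[x^{\pm 1}]$) is what allows us to pass the identity back from $\mathbb{F}^\enn(x)$ to $\mathbb{F}^\enn[x^{\pm 1}]$.
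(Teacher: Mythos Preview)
Your argument is correct and is essentially identical to the paper's proof: both use Lemma~\ref{T is k} to get $(\pi^\enn(T_i)-k)P_\mu^\enn=0$ and then combine with (\ref{CG property}) to conclude $(\overline{\sigma}(s_i)-1)P_\mu^\enn=0$. The only cosmetic difference is that the paper writes the relation in the inverted form $\overline{\sigma}(s_i)P_\mu^\enn-P_\mu^\enn = k\frac{1-x^{n\alpha_i}}{1-k^2 x^{n\alpha_i}}(\pi^\enn(T_i)-k)P_\mu^\enn$ so that no explicit cancellation step is needed, whereas you cancel the nonzero rational factor afterward; your extra care in justifying that cancellation in $\mathbb{F}^\enn(x)$ is fine but not strictly necessary.
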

\begin{proof}
By (\ref{CG property}) and Lemma~\ref{T is k},
\begin{equation*}
    \overline{\sigma} (s_i)P_\mu^\enn-P_\mu^\enn=k\dfrac{1-x^{n\alpha_i}}{1-k^2 x^{n\alpha_i}}(\pi^\enn(T_i)-k)P_\mu^\enn=0.
\end{equation*}
\end{proof}

We now have a family of polynomials 
\[\{P_\mu^\enn: \mu\in \mathbb{Z}^r, \mu\textnormal{ dominant}\}\] 
that is symmetric under an action of $W_0$ and that generalizes the symmetric Macdonald polynomials of \cite{mac1}. We call this family the \emph{symmetric SSV polynomials}. We would like to emphasize, however, that for $n\neq 1$, the symmetric SSV polynomials are \emph{not} in general symmetric with respect to the usual action of $W_0$. (See Appendix~\ref{appendix: symmetric} below for examples.) 

We will now give a combinatorial formula for the symmetric SSV polynomials. The idea is the same as in the proof of Theorem~3.4 in \cite{ry}. 

\begin{theorem}\label{thm: symmetric}
Let $\mu\in \mathbb{Z}^r$ be dominant, and otherwise let all notation be as in Theorem~\ref{main}. 
Then
\begin{align*}
   P_\mu^\enn\sim\sum_{u\in W_0}k^{\ell(u)}\sum_{p\in \mathcal{B}(u,\vec{w})} & x^{n\textnormal{wt}(p)+\phi(p)\lambda}\left (\prod_{a=1}^{\ell(\phi(p))}\sigma((\lambda,s_{u_{p,t}}\cdots s_{u_{p,a+1}} \alpha_{u_{p,a}})) \right)\cdot \notag
    \\& \cdot\left (\prod_{j\in f^+(p)}\dfrac{k^{-1}-k}{1-\gamma({-\beta_j^\enn};\lambda)} \right) \left(\prod_{j\in f^-(p)}\dfrac{(k^{-1}-k)\gamma({-\beta_j^\enn};\lambda)}{1-\gamma({-\beta_j^\enn};\lambda)} \right).
\end{align*}
\end{theorem}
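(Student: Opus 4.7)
The plan is to combine the defining relation $P_\mu^\enn = U E_\mu^\enn$ with the alcove walk formula for $T_u E_\mu^\enn$ already established in Theorem~\ref{permuted basement}. By Proposition~\ref{fundamental domain}, we have $E_\mu^\enn \sim \pi^\enn(\uptau_w^\vee) x^\lambda$, so
\[
P_\mu^\enn = U E_\mu^\enn \sim \pi^\enn(U \uptau_w^\vee) x^\lambda = \sum_{u \in W_0} k^{\ell(u)} \pi^\enn(T_u \uptau_w^\vee) x^\lambda,
\]
where the last equality uses the definition $U = \sum_{u \in W_0} k^{\ell(u)} T_u$ and the linearity of $\pi^\enn$.

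Next, I would apply Theorem~\ref{permuted basement} with the permutation $u$ in place of $1$ as the starting alcove: this yields an alcove walk expansion
\[
\pi^\enn(T_u \uptau_w^\vee) x^\lambda = \sum_{p \in \mathcal{B}(u, \vec{w})} x^{n\textnormal{wt}(p) + \phi(p)\lambda} \Bigl(\prod_{a} \sigma(\cdots)\Bigr) \Bigl(\prod_{j \in f^+(p)} \cdots \Bigr)\Bigl(\prod_{j \in f^-(p)} \cdots \Bigr)
\]
of exactly the shape appearing in the statement. Multiplying by $k^{\ell(u)}$ and summing over $u \in W_0$ gives precisely the claimed expression, completing the proof modulo the constant of proportionality inherited from Theorem~\ref{main}.

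The only conceptual point to check is that the derivation of Theorem~\ref{permuted basement} cited in the previous section is genuinely uniform in the starting alcove $u \in W_0$, i.e.\ that Ram and Yip's intertwiner product formula (Theorem~\ref{ry main}) applies just as well to $T_u \uptau_w^\vee$ as to $\uptau_w^\vee$. This is already noted in Theorem~\ref{permuted basement}, which itself is a direct consequence of Theorem~\ref{ry main} applied with an arbitrary $u$. Consequently there is essentially no new analytic or combinatorial obstacle: the only work is bookkeeping for the scalar in the relation $\sim$, and this scalar is inherited unchanged from Theorem~\ref{main} because $U$ is a fixed element that does not depend on $\lambda$ or $w$.

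The main (if mild) obstacle is conceptual rather than technical: one must be comfortable that the symmetrization via $U$ commutes with taking the proportionality-class representative $\pi^\enn(\uptau_w^\vee) x^\lambda$ of $E_\mu^\enn$. This is automatic because the proportionality scalar relating $E_\mu^\enn$ and $\pi^\enn(\uptau_w^\vee) x^\lambda$ is an element of $\mathbb{F}^\enn$ (computed explicitly in \eqref{scalar}), hence central with respect to the $\mathbb{F}^\enn$-linear action of $U$. Thus pulling $U$ through does not disturb the $\sim$ relation, and the stated formula for $P_\mu^\enn$ follows.
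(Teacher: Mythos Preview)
Your proposal is correct and follows essentially the same route as the paper: you use $P_\mu^\enn = U E_\mu^\enn$, invoke Proposition~\ref{fundamental domain} to replace $E_\mu^\enn$ by $\pi^\enn(\uptau_w^\vee)x^\lambda$ up to a nonzero scalar, and then apply Theorem~\ref{permuted basement} term by term. The paper's proof is exactly this two-line argument; your additional remarks about the proportionality scalar commuting with $U$ are correct but not strictly needed, since the paper simply states the $\sim$ relation.
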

\begin{proof}
We have
\begin{equation*}
    P_\mu^\enn=\sum_{u\in W_0}k^{\ell(u)} \pn(T_u) E_\mu^\enn\sim
    \sum_{u\in W_0}k^{\ell(u)} \pn(T_u \uptau_w^\vee) x^\lambda
\end{equation*}
by Proposition~\ref{fundamental domain}.
Then the result follows from Theorem~\ref{permuted basement}. 
\end{proof}


\section{\texorpdfstring{$q$}{q}-limits}\label{section: q limits}
In this section, we will record formulas for the limits
\begin{equation*}
    E_\mu^\enn(x;0,k)=\lim_{q\rightarrow 0}E_\mu^\enn(x;q,k)
\end{equation*}
and 
\begin{equation*}
    E_\mu^\enn(x;\infty,k)=\lim_{q\rightarrow \infty}E_\mu^\enn(x;q,k).
\end{equation*}
We will also record formulas for $P_\mu^\enn(x;0,k)$ and $P_\mu^\enn(x;\infty,k)$ and derive a simple positivity result. 

\begin{remark}
In \cite{ssv2}, $E_\mu^\enn(x;\infty,k)$ is related to metaplectic Iwahori-Whittaker functions (after specializing the parameters $G_i^\enn$ to particular Gauss sums). See \cite{pp} for the definition of the relevant metaplectic Iwahori-Whittaker functions. To see the relationship between Macdonald polynomials and Iwahori Whittaker functions in the non-metaplectic case, see \cite{BBL}. 
\end{remark}

The following result is a consequence of Lemma~\ref{q power}, (\ref{result}), and basic calculus. 

\begin{theorem}\label{main q limit}
Let all notation be as in Theorem~\ref{main}. The limits $E_\mu^\enn(x;0,k)$ and $E_\mu^\enn(x,\infty,k)$ are well-defined. We have 
\begin{align*}
    &\left (\prod_{a=1}^{\ell(\phi(\widetilde{p}))}\sigma((\lambda,s_{u_{\widetilde{p},t}}\cdots s_{u_{\widetilde{p},a+1}} \alpha_{u_{\widetilde{p},a}})) \right)E_\mu^\enn(x;0,k)\notag
    \\&=\sum_{p\in \mathcal{B}^+(\vec{w})}  x^{n\textnormal{wt}(p)+\phi(p)\lambda}\left (\prod_{a=1}^{\ell(\phi(p))}\sigma((\lambda,s_{u_{p,t}}\cdots s_{u_{p,a+1}} \alpha_{u_{p,a}})) \right)(k^{-1}-k)^{|f^+(p)|}
\end{align*}
and
\begin{align*}
    &\left (\prod_{a=1}^{\ell(\phi(\widetilde{p}))}\sigma((\lambda,s_{u_{\widetilde{p},t}}\cdots s_{u_{\widetilde{p},a+1}} \alpha_{u_{\widetilde{p},a}})) \right)E_\mu^\enn(x;\infty,k)\notag
    \\&=\sum_{p\in \mathcal{B}^-(\vec{w})}  x^{n\textnormal{wt}(p)+\phi(p)\lambda}\left (\prod_{a=1}^{\ell(\phi(p))}\sigma((\lambda,s_{u_{p,t}}\cdots s_{u_{p,a+1}} \alpha_{u_{p,a}})) \right)(k-k^{-1})^{|f^-(p)|},
\end{align*}
where $\mathcal{B}^+(\vec{w})$ (resp. $\mathcal{B}^-(\vec{w})$) is the set of all alcove walks in $\mathcal{B}(\vec{w})$ with only positive (resp. negative) folds and $|f^{+}(p)|$ (resp. $|f^-(p)|$) is the number of positive (resp. negative) folds. 
\end{theorem}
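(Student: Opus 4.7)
The plan is to start from the explicit formula \eqref{result} of Theorem~\ref{main}, namely
\[
\left (\prod_{a=1}^{\ell(\phi(\widetilde{p}))}\sigma((\lambda,s_{u_{\widetilde{p},t}}\cdots s_{u_{\widetilde{p},a+1}} \alpha_{u_{\widetilde{p},a}})) \right)E_\mu^\enn(x;q,k) = \sum_{p\in \mathcal{B}(\vec{w})} x^{n\textnormal{wt}(p)+\phi(p)\lambda} F(p,q,k),
\]
where $F(p,q,k)$ is the product of the $\sigma$-factors (which do \emph{not} depend on $q$) times the product over positive and negative folds. The prefactor on the left-hand side is also $q$-independent. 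So the problem reduces, term by term, to taking the $q\to 0$ and $q\to \infty$ limits of each fold factor
\[
A_j^+(q) = \dfrac{k^{-1}-k}{1-\gamma({-\beta_j^\enn};\lambda)}, \qquad A_j^-(q) = \dfrac{(k^{-1}-k)\gamma({-\beta_j^\enn};\lambda)}{1-\gamma({-\beta_j^\enn};\lambda)}.
\]

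The crucial input is Lemma~\ref{q power}(2), which guarantees that for each $j$, the scalar $\gamma(-\beta_j^\enn;\lambda)$ is of the form $q^{N_j}\cdot c_j$ for a strictly positive integer $N_j$ and a nonzero scalar $c_j\in\mathbb{K}^\enn$ that does not depend on $q$. From this, elementary calculus gives
\[
\lim_{q\to 0}A_j^+(q) = k^{-1}-k, \quad \lim_{q\to 0}A_j^-(q) = 0, \quad \lim_{q\to \infty}A_j^+(q) = 0, \quad \lim_{q\to \infty}A_j^-(q) = -(k^{-1}-k) = k-k^{-1}.
\]

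Since the product $\prod_{j\in f^+(p)} A_j^+(q) \prod_{j\in f^-(p)} A_j^-(q)$ is a finite product, the limit distributes. Hence in the $q\to 0$ limit, every alcove walk $p$ with at least one negative fold contributes $0$, leaving only walks in $\mathcal{B}^+(\vec{w})$, each contributing $(k^{-1}-k)^{|f^+(p)|}$ from the fold factors; and symmetrically, in the $q\to\infty$ limit only walks in $\mathcal{B}^-(\vec{w})$ survive, each contributing $(k-k^{-1})^{|f^-(p)|}$. Summing over $p$ yields the two claimed formulas, and in particular shows that both limits are well-defined.

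There is no real obstacle here beyond verifying Lemma~\ref{q power}(2) applies uniformly to all $\beta_j^\enn$ arising in the fixed reduced expression $w^\enn = s_{i_1}^\enn\cdots s_{i_\ell}^\enn$; this is immediate because $w^\enn$ is chosen of minimal length with $w^\enn\lambda = \mu$, which is precisely the hypothesis of that lemma. The only minor bookkeeping point is that the $\sigma$-prefactor on the left-hand side of \eqref{result} is independent of $q$, so it passes through both limits unchanged and may be kept on the left as written in the theorem statement.
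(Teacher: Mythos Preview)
Your proof is correct and follows exactly the approach the paper indicates: the paper's proof is simply the one-line remark that the result ``is a consequence of Lemma~\ref{q power}, (\ref{result}), and basic calculus,'' and you have spelled out precisely those three ingredients. Your additional care in checking that the $\sigma$-factors are $q$-independent and that the minimal-length hypothesis of Lemma~\ref{q power}(2) is satisfied is appropriate and correct.
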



The same observations also give combinatorial formulas for the $q$-limits of the symmetric SSV polynomials $P_\mu^\enn$:

\begin{theorem}\label{symmetric q limit}
Let $\mu\in \mathbb{Z}^r$ be dominant, and otherwise let all notation be as in Theorem~\ref{main}. 
Then
\begin{align*}
   &P_\mu^\enn(x;0,k)\notag
   \\&=\sum_{u\in W_0}k^{\ell(u)}\sum_{p\in \mathcal{B}^+(u,\vec{w})} & x^{n\textnormal{wt}(p)+\phi(p)\lambda}\left (\prod_{a=1}^{\ell(\phi(p))}\sigma((\lambda,s_{u_{p,t}}\cdots s_{u_{p,a+1}} \alpha_{u_{p,a}})) \right)(k^{-1}-k)^{|f^+(p)|} 
\end{align*}
and 
\begin{align*}
   &P_\mu^\enn(x;\infty,k)\notag
   \\&=\sum_{u\in W_0}k^{\ell(u)}\sum_{p\in \mathcal{B}^-(u,\vec{w})} & x^{n\textnormal{wt}(p)+\phi(p)\lambda}\left (\prod_{a=1}^{\ell(\phi(p))}\sigma((\lambda,s_{u_{p,t}}\cdots s_{u_{p,a+1}} \alpha_{u_{p,a}})) \right)(k-k^{-1})^{|f^-(p)|},
\end{align*}
where $\mathcal{B}^+(u,\vec{w})$ (resp. $\mathcal{B}^-(u,\vec{w})$) is the set of all alcove walks in $\mathcal{B}(u,\vec{w})$ with only positive (resp. negative) folds. 
\end{theorem}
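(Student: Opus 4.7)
The plan is to take term-by-term limits in the combinatorial formula of Theorem~\ref{thm: symmetric}, exactly as in the proof of Theorem~\ref{main q limit}. Each summand there is indexed by a pair $(u,p)$ with $u \in W_0$ and $p \in \mathcal{B}(u,\vec{w})$, and within a given summand the only $q$-dependence comes from the fold factors $(k^{-1}-k)/(1-\gamma(-\beta_j^\enn;\lambda))$ for $j \in f^+(p)$ and $(k^{-1}-k)\gamma(-\beta_j^\enn;\lambda)/(1-\gamma(-\beta_j^\enn;\lambda))$ for $j \in f^-(p)$. Since the outer sum is finite and the prefactors $k^{\ell(u)}$, $\sigma(\cdot)$, and the monomial $x^{n\textnormal{wt}(p)+\phi(p)\lambda}$ are all $q$-independent, it suffices to analyze these rational functions of $q$ individually.

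By Lemma~\ref{q power}, each $\gamma(-\beta_j^\enn;\lambda)$ is a nonzero scalar multiple of a strictly positive power of $q$, so it tends to $0$ as $q \to 0$ and to $\infty$ as $q \to \infty$. A routine limit computation then shows that as $q \to 0$ the positive-fold factor tends to $k^{-1}-k$ and the negative-fold factor tends to $0$, while as $q \to \infty$ the positive-fold factor tends to $0$ and the negative-fold factor tends to $-(k^{-1}-k) = k-k^{-1}$. Consequently, in the $q \to 0$ limit only walks in $\mathcal{B}^+(u,\vec{w})$ survive, each contributing $(k^{-1}-k)^{|f^+(p)|}$, and in the $q \to \infty$ limit only walks in $\mathcal{B}^-(u,\vec{w})$ survive, each contributing $(k-k^{-1})^{|f^-(p)|}$. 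Collecting these contributions yields exactly the asserted formulas.

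An equivalent way to package the argument is to observe that $P_\mu^\enn = U E_\mu^\enn$ with $U = \sum_{u \in W_0} k^{\ell(u)} T_u$ independent of $q$, apply Theorem~\ref{permuted basement} to each $T_u E_\mu^\enn$, and then pass to the $q$-limit summand by summand as in Theorem~\ref{main q limit}. There is no substantive obstacle here: the only new feature beyond the nonsymmetric case is the outer sum over $W_0$, which commutes with the finite $q$-limit, and the product structure of the fold contributions ensures that a single vanishing factor kills the entire summand, producing the restriction to $\mathcal{B}^{\pm}(u,\vec{w})$.
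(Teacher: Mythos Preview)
Your proposal is correct and takes essentially the same approach as the paper: the paper simply remarks that ``the same observations'' as in Theorem~\ref{main q limit} (namely Lemma~\ref{q power} applied to the formula of Theorem~\ref{thm: symmetric}) yield the result, and your argument spells out precisely those observations.
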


Since $\sigma(a)\in\{k,k^{-1}, G_a\}$ for all $a\in\mathbb{Z}$, Theorems \ref{main q limit} and \ref{symmetric q limit} immediately imply:
\begin{corollary}
Let $\mu,\nu\in\mathbb{Z}^r$. The coefficient of $x^\nu$ in $E_\mu^\enn(x; 0,k)$, $E_\mu^\enn(x; \infty,k)$, $P_\mu^\enn(x; 0,k)$, and $P_\mu^\enn(x;\infty,k)$ is a Laurent polynomial in $k, G_1^\enn,\dots, G^\enn_{\lfloor n/2\rfloor}$. If we specialize $k$ to be a real number with $0<k\leq 1$ (respectively $k\geq 1$), then the coefficient of $x^\nu$ in $E_\mu^\enn(x; 0,k)$ and $P_\mu^\enn(x; 0,k)$ (respectively $E_\mu^\enn(x; \infty,k)$ and $P_\mu^\enn(x; \infty,k)$) is a Laurent polynomial in $G_1^\enn,\dots, G^\enn_{\lfloor n/2\rfloor}$ \emph{with nonnegative coefficients}. 
\end{corollary}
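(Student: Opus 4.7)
The plan is to verify the claim by direct inspection of the explicit formulas in Theorems~\ref{main q limit} and \ref{symmetric q limit}; no substantive obstacle is expected, since the corollary is asserted to follow immediately, and my role is mainly careful bookkeeping of which Laurent monomials in the generators $k, G_1^\enn, \dots, G_{\lfloor n/2\rfloor}^\enn$ appear with what sign.

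The first step I would take is to confirm that every $\sigma(a)$ is a Laurent monomial \emph{with coefficient $+1$} in these generators. By \eqref{sigma def}, $\sigma(a)$ is either $k^{\pm 1}$ or $G^\enn_a$, and using $G^\enn_j=G^\enn_{r_n(j)}$ together with $G^\enn_j G^\enn_{-j}=1$ I can rewrite $G^\enn_a$ as either $G^\enn_{r_n(a)}$ (if $r_n(a)\le\lfloor n/2\rfloor$) or $(G^\enn_{n-r_n(a)})^{-1}$ (if $r_n(a)>n/2$); either way, a single monomial with unit coefficient.

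For the Laurent polynomial claim I would then observe that each term on the right-hand side of Theorems~\ref{main q limit} and \ref{symmetric q limit} is a finite product of such $\sigma$-monomials, a power of $(k^{\pm 1}-k^{\mp 1})$ (which is a polynomial in $k$), and — in the symmetric case — the factor $k^{\ell(u)}$. The finite sum over alcove walks (and over $u\in W_0$) is therefore a Laurent polynomial in $k, G_1^\enn, \dots, G_{\lfloor n/2\rfloor}^\enn$. Since the normalizing factor on the left-hand side of Theorem~\ref{main q limit} is itself a product of $\sigma$'s — hence a Laurent monomial in the same generators — dividing by it preserves the Laurent polynomial property.

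For positivity, I would specialize $k$ to a real number with $0<k\le 1$ (the case $k\ge 1$ with $q\to\infty$ being completely analogous, using $k-k^{-1}\ge 0$ instead of $k^{-1}-k\ge 0$). Then every $k$-factor in each summand becomes a positive real and each $(k^{-1}-k)^{|f^+(p)|}$ is a nonnegative real, so each summand is a nonnegative real times a single Laurent monomial in the $G^\enn_i$ with coefficient $+1$. Collecting like $G$-monomials across the finite sum yields coefficients that are sums of nonnegative reals, hence nonnegative. The normalizing factor after specialization is a strictly positive real times a Laurent monomial in the $G^\enn_i$ with coefficient $+1$, so dividing by it preserves nonnegativity. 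The hardest part, such as it is, will be keeping the sign bookkeeping clean — in particular, making sure that no spurious $-1$ is introduced when inverting a $G^\enn_j$ via the relation $G^\enn_j G^\enn_{-j}=1$.
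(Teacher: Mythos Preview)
Your proposal is correct and takes essentially the same approach as the paper, which simply notes that $\sigma(a)\in\{k,k^{-1},G_a\}$ and invokes Theorems~\ref{main q limit} and \ref{symmetric q limit}. Your version is more thorough---in particular, you explicitly handle the normalizing factor on the left-hand side of Theorem~\ref{main q limit} and the rewriting of $G^\enn_a$ in terms of $G^\enn_1,\dots,G^\enn_{\lfloor n/2\rfloor}$, both of which the paper leaves implicit.
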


We have the following corollary of Theorem~\ref{main q limit}.

\begin{corollary}
If $\mu\in\mathbb{Z}^r$ is dominant, then
\begin{equation*}
    E_\mu^\enn(x;0,k)= x^\mu.
\end{equation*}
If $\mu\in\mathbb{Z}^r$ is antidominant ($\mu_i<\mu_j$ for $i<j$), then
\begin{equation*}
    E_\mu^\enn(x;\infty,k)= x^\mu.
\end{equation*}
\end{corollary}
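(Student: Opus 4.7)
The plan is to apply Theorem~\ref{main q limit} and show that the right-hand side collapses to a single term: the unfolded walk $\widetilde{p}$. Specifically, for dominant $\mu$ I will argue $\mathcal{B}^+(\vec{w}) = \{\widetilde{p}\}$, and symmetrically $\mathcal{B}^-(\vec{w}) = \{\widetilde{p}\}$ for antidominant $\mu$. Granting this, the $\sigma$-product factors cancel between the two sides of the formula, yielding $E_\mu^\enn(x;0,k) = x^{n\textnormal{wt}(\widetilde{p})+\phi(\widetilde{p})\lambda} = x^\mu$, and similarly for $q\to\infty$.

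The key geometric claim is that for dominant $\mu$, the final alcove $w\mathcal{A}\subseteq\mathbb{R}^r$ (with $w=(\Psi^\enn)^{-1}(w^\enn)$) lies in the open dominant chamber $C=\{v:v_1>\cdots>v_r\}$. If not, $w\mathcal{A}$ and $\mathcal{A}$ lie on opposite sides of $\mathfrak{h}^{\alpha_i}$ for some finite simple root $\alpha_i$, which forces $\ell(s_iw)<\ell(w)$. But since $\mu/n\in\overline{w\mathcal{A}}$ we would then have $(\alpha_i,\mu)\leq 0$, and dominance forces $(\alpha_i,\mu)=0$, so $s_i\mu=\mu$. Translating via $\Psi^\enn$, $s_iw^\enn\lambda=\mu$ with strictly smaller length than $w^\enn$, contradicting its minimality.

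Next I will analyze the signs of crossings in $\widetilde{p}$. A wall $\mathfrak{h}^{\alpha+s\delta}$ with $\alpha\in\Phi_+$ has $\mathcal{A}$ on its positive side iff $s\geq 0$, since $(\alpha,v)\in(0,1)$ for $v\in\mathcal{A}$. For $u\in w\mathcal{A}\subseteq C$, $(\alpha,u)\in(c,c+1)$ for some integer $c\geq 0$, and a brief case analysis shows that a wall separating $\mathcal{A}$ from $w\mathcal{A}$ must have $s\leq -1$, placing $\mathcal{A}$ on its negative side. Since in a reduced expression each separating wall is crossed by $\widetilde{p}$ exactly once, $A_{j-1}^{\widetilde{p}}$ lies on $\mathcal{A}$'s side of the wall crossed at step $j$, making every crossing in $\widetilde{p}$ negative. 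For any other walk $p\in\mathcal{B}(\vec{w})$, let $j_0$ be the first step at which $p$ folds; up to step $j_0-1$ it coincides with $\widetilde{p}$, so $A_{j_0-1}^p$ lies on the negative side of the wall at step $j_0$, making the fold negative, whence $p\notin\mathcal{B}^+(\vec{w})$.

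The antidominant case is entirely symmetric: strict antidominance gives $(\alpha_i,\mu)<0$ for each simple root, so $w\mathcal{A}$ lies in the open antidominant chamber; the analogous case analysis produces separating walls with $s\geq 0$, placing $\mathcal{A}$ on the positive side. Every crossing in $\widetilde{p}$ is then positive, and the same ``first-fold'' argument delivers $\mathcal{B}^-(\vec{w})=\{\widetilde{p}\}$. The main obstacle will be tracking the periodic orientation carefully through the case analysis, since positivity or negativity in that orientation depends on expressing $\widehat{\alpha}$ in the form $\alpha+s\delta$ with $\alpha\in\Phi_+$.
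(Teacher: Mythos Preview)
Your proof is correct and follows essentially the same approach as the paper: show that every crossing in the unfolded walk $\widetilde{p}$ is negative (for dominant $\mu$), so that the first fold of any other walk in $\mathcal{B}(\vec{w})$ is necessarily negative, whence $\mathcal{B}^+(\vec{w})=\{\widetilde{p}\}$. Your argument is in fact slightly more careful than the paper's sketch, since you use the minimality of $w^{\enn}$ to place $w\mathcal{A}$ in the open dominant chamber even when $\mu$ is only weakly dominant, whereas the paper simply asserts $(\mu,\alpha)>0$ for all $\alpha\in\Phi_+$.
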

\begin{proof}
Assume $\mu$ is dominant: the antidominant case is similar. As usual, take $\lambda\in A^\enn$ and the shortest $w^\enn\in W^\enn_{\textnormal{Cox}}$ such that $w^\enn \lambda=\mu$, and write $w=(\Psi^\enn)^{-1}(w^\enn)$. 
As in Remark~\ref{alcove walk explanation}, we view $\ell(w)$ as the number of hyperplanes of the form $\mathfrak{h}^{\alpha+s\delta}$ crossed by a shortest unfolded alcove walk $\widetilde{p}$ from $1$ to $w$. Since $(\mu,\alpha)>0$ for all $\alpha\in\Phi_+$, every crossing of a hyperplane $\mathfrak{h}^{\alpha+s\delta}$ (in this case, $s\in\mathbb{Z}$ with $s>0$) will be from the negative side to the positive side. (If not, some hyperplane is crossed twice and $\widetilde{p}$ is not a shortest alcove walk from $1$ to $w$.) Then for any folded alcove walk of the same type as $\widetilde{p}$, the first fold must be a negative fold. This implies that $\mathcal{B}^+(\vec{w})=\{\widetilde{p}\}$. 
%
\end{proof}

\appendix
\section{Examples (nonsymmetric)}\label{appendix: nonsymmetric}

Here we give some examples of SSV polynomials $E_\mu^\enn$, computed using the combinatorial formula of Theorem~\ref{main} (with no terms combined). We simplify the terms slightly, using the fact that $G_{n/2}^\enn=\pm 1$, and replacing the notation $G_a^\enn$ with the cleaner notation $G_a$ for $a\in\mathbb{Z}$. 

The reader should compare these expressions with the Appendix of \cite{ssv}, where the same polynomials are calculated. The expressions given here simplify to the ones in \cite{ssv}, modulo the substitutions $q\rightarrow q^n$ and $G_a=-kg_{-a}$ for $a\in\mathbb{Z}$. 

{\small

\begin{flalign*}
    E_{(0,0,0)}^{(1)}  &=1
    &\\E_{(0,0,0)}^{(2)}  &=1
    &\\E_{(0,0,0)}^{(3)}  &=1
    &\\E_{(0,0,0)}^{(4)}  &=1
    &\\E_{(0,0,0)}^{(5)}  &=1
\end{flalign*}
    \begin{flalign*}
    E_{(1,0,0)}^{(1)}  &={  x_1}
    &\\E_{(1,0,0)}^{(2)}  &={  x_1}
    &\\E_{(1,0,0)}^{(3)}  &={  x_1}
    &\\E_{(1,0,0)}^{(4)}  &={  x_1}
    &\\E_{(1,0,0)}^{(5)}  &={  x_1}
    \end{flalign*}
    
    \begin{flalign*}
    E_{(0,1,0)}^{(1)}  &= { x_2}+{\frac {{ x_1}\, \left( k-1 \right)  \left( k+1 \right) }{{k}^{4}q-1}}
    &\\E_{(0,1,0)}^{(2)}  &={ x_2}+{\frac {{ x_1}\, \left( k-1 \right)  \left( k+1 \right) }{k \left( -{ G_{{1}}}+kq \right) }}
    &\\E_{(0,1,0)}^{(3)}  &={ x_2}+{\frac {{ x_1}\, \left( k-1 \right)  \left( k+1 \right) { G_{{1}}}^{2}}{k \left( -{ G_{{1}}}^{3}+kq \right) }}
    &\\E_{(0,1,0)}^{(4)}  &={ x_2}+{\frac {{ x_1}\, \left( k-1 \right)  \left( k+1 \right) { G_{{1}}}^{2}}{k \left( -{ G_{{1}}}^{3}+kq \right) }}
    &\\E_{(0,1,0)}^{(5)}  &={ x_2}+{\frac {{ x_1}\, \left( k-1 \right)  \left( k+1 \right) { G_{{1}}}^{2}}{k \left( -{ G_{{1}}}^{3}+kq \right) }}
    \end{flalign*}
    
    \begin{flalign*}
    E_{(0,0,1)}^{(1)}&={ x_3}+{\frac {{ x_2}\, \left( k-1 \right)  \left( k+1 \right) }{q{k}^{2}-1}}+{\frac {{ x_1}\, \left( k-1 \right)  \left( k+1 \right) {k}^{2}}{{k}^{4}q-1}}+{\frac {{ x_1}\, \left( k-1 \right) ^{2} \left( k+1 \right) ^{2}}{ \left( q{k}^{2}-1 \right)  \left( {k}^{4}q-1 \right) }}
    &\\E_{(0,0,1)}^{(2)}&={ x_3}-{\frac {{ x_2}\, \left( k-1 \right)  \left( k+1 \right) }{k{ G_{{1}}}-q}}+{\frac {{ x_1}\, \left( k-1 \right)  \left( k+1 \right)  G_{{1}}}{-{ G_{{1}}}+kq}}-{\frac {{ x_1}\, \left( k-1 \right) ^{2} \left( k+1 \right) ^{2}}{k \left( k{ G_{{1}}}-q \right)  \left( -{ G_{{1}}}+kq \right) }}
    &\\E_{(0,0,1)}^{(3)}&={ x_3}-{\frac {{ x_2}\, \left( k-1 \right)  \left( k+1 \right) { G_{{1}}}^{2}}{k{ G_{{1}}}^{3}-q}}+{\frac {{ x_1}\, \left( k-1 \right)  \left( k+1 \right)  G_{{1}}}{-{ G_{{1}}}^{3}+kq}}-{\frac {{ x_1}\, \left( k-1 \right) ^{2} \left( k+1 \right) ^{2}{ G_{{1}}}^{4}}{k \left( k{ G_{{1}}}^{3}-q \right)  \left( -{ G_{{1}}}^{3}+kq \right) }}
    &\\E_{(0,0,1)}^{(4)}&={ x_3}-{\frac {{ x_2}\, \left( k-1 \right)  \left( k+1 \right) { G_{{1}}}^{2}}{k{ G_{{1}}}^{3}-q}}+{\frac {{ x_1}\, \left( k-1 \right)  \left( k+1 \right)  G_{{1}}}{-{ G_{{1}}}^{3}+kq}}-{\frac {{ x_1}\, \left( k-1 \right) ^{2} \left( k+1 \right) ^{2}{ G_{{1}}}^{4}}{k \left( k{ G_{{1}}}^{3}-q \right)  \left( -{ G_{{1}}}^{3}+kq \right) }}
    &\\E_{(0,0,1)}^{(5)}&={ x_3}-{\frac {{ x_2}\, \left( k-1 \right)  \left( k+1 \right) { G_{{1}}}^{2}}{k{ G_{{1}}}^{3}-q}}+{\frac {{ x_1}\, \left( k-1 \right)  \left( k+1 \right)  G_{{1}}}{-{ G_{{1}}}^{3}+kq}}-{\frac {{ x_1}\, \left( k-1 \right) ^{2} \left( k+1 \right) ^{2}{ G_{{1}}}^{4}}{k \left( k{ G_{{1}}}^{3}-q \right)  \left( -{ G_{{1}}}^{3}+kq \right) }}
    \end{flalign*}
    
\begin{flalign*}
    E_{(0,1,1)}^{(1)} &= { x_2}\,{ x_3}+{\frac {{ x_1}\,{ x_3}\, \left( k-1 \right)  \left( k+1 \right) }{q{k}^{2}-1}}+{\frac {{ x_1}\,{ x_2}\, \left( k-1 \right)  \left( k+1 \right) {k}^{2}}{q{k}^{4}-1}}+{\frac {{ x_1}\,{ x_2}\, \left( k-1 \right) ^{2} \left( k+1 \right) ^{2}}{ \left( q{k}^{2}-1 \right)  \left( q{k}^{4}-1 \right) }}
    &\\E_{(0,1,1)}^{(2)} &= { x_2}\,{ x_3}-{\frac {{ x_1}\,{ x_3}\, \left( k-1 \right)  \left( k+1 \right) }{k{ G_{{1}}}-q}}+{\frac {{ x_1}\,{ x_2}\, \left( k-1 \right)  \left( k+1 \right)  G_{{1}}}{-{ G_{{1}}}+kq}}-{\frac {{ x_1}\,{ x_2}\, \left( k-1 \right) ^{2} \left( k+1 \right) ^{2}}{k \left( k{ G_{{1}}}-q \right)  \left( -{ G_{{1}}}+kq \right) }}
    &\\E_{(0,1,1)}^{(3)} &={ x_2}\,{ x_3}-{\frac {{ x_1}\,{ x_3}\, \left( k-1 \right)  \left( k+1 \right) { G_{{1}}}^{2}}{k{ G_{{1}}}^{3}-q}}+{\frac {{ x_1}\,{ x_2}\, \left( k-1 \right)  \left( k+1 \right)  G_{{1}}}{-{ G_{{1}}}^{3}+kq}}-{\frac {{ x_1}\,{ x_2}\, \left( k-1 \right) ^{2} \left( k+1 \right) ^{2}{ G_{{1}}}^{4}}{k \left( k{ G_{{1}}}^{3}-q \right)  \left( -{ G_{{1}}}^{3}+kq \right) }}
    &\\E_{(0,1,1)}^{(4)} &= { x_2}\,{ x_3}-{\frac {{ x_1}\,{ x_3}\, \left( k-1 \right)  \left( k+1 \right) { G_{{1}}}^{2}}{k{ G_{{1}}}^{3}-q}}+{\frac {{ x_1}\,{ x_2}\, \left( k-1 \right)  \left( k+1 \right)  G_{{1}}}{-{ G_{{1}}}^{3}+kq}}-{\frac {{ x_1}\,{ x_2}\, \left( k-1 \right) ^{2} \left( k+1 \right) ^{2}{ G_{{1}}}^{4}}{k \left( k{ G_{{1}}}^{3}-q \right)  \left( -{ G_{{1}}}^{3}+kq \right) }}
    &\\E_{(0,1,1)}^{(5)} &= { x_2}\,{ x_3}-{\frac {{ x_1}\,{ x_3}\, \left( k-1 \right)  \left( k+1 \right) { G_{{1}}}^{2}}{k{ G_{{1}}}^{3}-q}}+{\frac {{ x_1}\,{ x_2}\, \left( k-1 \right)  \left( k+1 \right)  G_{{1}}}{-{ G_{{1}}}^{3}+kq}}-{\frac {{ x_1}\,{ x_2}\, \left( k-1 \right) ^{2} \left( k+1 \right) ^{2}{ G_{{1}}}^{4}}{k \left( k{ G_{{1}}}^{3}-q \right)  \left( -{ G_{{1}}}^{3}+kq \right) }}
\end{flalign*}
\begin{flalign*}
    E_{(1,0,1)}^{(1)} &= x_1 x_3 +{x_1 x_2 \frac { \left( k+1 \right)  \left( k-1 \right) }{q{k}^{4}-1}}
    &\\E_{(1,0,1)}^{(2)} &= x_1 x_3+{\frac {{ x_1}\,{ x_2}\, \left( k-1 \right)  \left( k+1 \right) }{k \left( -{ G_{{1}}}+kq \right) }}
    &\\E_{(1,0,1)}^{(3)} &= x_1 x_3+{\frac {{ x_1}\,{ x_2}\, \left( k-1 \right)  \left( k+1 \right) { G_{{1}}}^{2}}{k \left( -{ G_{{1}}}^{3}+kq \right) }}
    &\\E_{(1,0,1)}^{(4)} &= x_1 x_3+{\frac {{ x_1}\,{ x_2}\, \left( k-1 \right)  \left( k+1 \right) { G_{{1}}}^{2}}{k \left( -{ G_{{1}}}^{3}+kq \right) }}
    &\\E_{(1,0,1)}^{(5)} &= x_1 x_3+{\frac {{ x_1}\,{ x_2}\, \left( k-1 \right)  \left( k+1 \right) { G_{{1}}}^{2}}{k \left( -{ G_{{1}}}^{3}+kq \right) }}
\end{flalign*}
\begin{flalign*}
    E_{(1,1,0)}^{(1)} &= {  x_1}\,{  x_2}
    &\\E_{(1,1,0)}^{(2)} &= {  x_1}\,{  x_2}
    &\\E_{(1,1,0)}^{(3)} &= {  x_1}\,{  x_2}
    &\\E_{(1,1,0)}^{(4)} &= {  x_1}\,{  x_2}
    &\\E_{(1,1,0)}^{(5)} &= {  x_1}\,{  x_2}
\end{flalign*}
\begin{flalign*}
    E_{(2,0,0)}^{(1)} &= {{ x_1}}^{2}+{\frac {{ x_3}\,{ x_1}\, \left( k-1 \right)  \left( k+1 \right) q}{{k}^{2}q-1}}+{\frac {{ x_1}\,{ x_2}\, \left( k-1 \right)  \left( k+1 \right) {k}^{2}q}{q{k}^{4}-1}}+{\frac {{ x_1}\,{ x_2}\, \left( k-1 \right) ^{2} \left( k+1 \right) ^{2}q}{ \left( {k}^{2}q-1 \right)  \left( q{k}^{4}-1 \right) }}
    &\\E_{(2,0,0)}^{(2)} &= {{  x_1}}^{2}
    &\\E_{(2,0,0)}^{(3)} &= {{  x_1}}^{2}
    &\\E_{(2,0,0)}^{(4)} &= {{  x_1}}^{2}
    &\\E_{(2,0,0)}^{(5)} &= {{  x_1}}^{2}
\end{flalign*}
\begin{flalign*}
    E_{(0,2,0)}^{(1)} &= {{ x_2}}^{2}+{\frac {{{ x_1}}^{2} \left( k-1 \right)  \left( k+1 \right) }{ \left( {k}^{2}q-1 \right)  \left( {k}^{2}q+1 \right) }}+{\frac {{ x_2}\,{ x_3}\, \left( k-1 \right)  \left( k+1 \right) q}{{k}^{2}q-1}}+{\frac {{ x_3}\,{ x_1}\, \left( k-1 \right) ^{2} \left( k+1 \right) ^{2}q}{ \left( {k}^{2}q-1 \right) ^{2} \left( {k}^{2}q+1 \right) }}
    \\&+{\frac { \left( k+1 \right)  \left( k-1 \right) { x_2}\,{ x_1}}{{k}^{4}q-1}}+{\frac {{ x_1}\,{ x_2}\, \left( k-1 \right) ^{2} \left( k+1 \right) ^{2}{k}^{2}q}{ \left( {k}^{2}q-1 \right)  \left( {k}^{2}q+1 \right)  \left( {k}^{4}q-1 \right) }}+{\frac {{ x_1}\,{ x_2}\, \left( k-1 \right) ^{2} \left( k+1 \right) ^{2}{k}^{2}q}{ \left( {k}^{2}q-1 \right)  \left( {k}^{4}q-1 \right) }}
    \\&+{\frac {{ x_1}\,{ x_2}\, \left( k-1 \right) ^{3} \left( k+1 \right) ^{3}q}{ \left( {k}^{2}q-1 \right) ^{2} \left( {k}^{2}q+1 \right)  \left( {k}^{4}q-1 \right) }}
    &\\E_{(0,2,0)}^{(2)} &= {{ x_2}}^{2}+{\frac {{{ x_1}}^{2} \left( k-1 \right)  \left( k+1 \right) }{ \left( {k}^{2}q-1 \right)  \left( {k}^{2}q+1 \right) }}
    &\\E_{(0,2,0)}^{(3)} &= {{ x_2}}^{2}+{\frac {{{ x_1}}^{2} \left( k-1 \right)  \left( k+1 \right)  G_{{1}}}{k \left( k{q}^{2}{ G_{{1}}}^{3}-1 \right) }}
    &\\E_{(0,2,0)}^{(4)} &= {{ x_2}}^{2}+{\frac {{{ x_1}}^{2} \left( k-1 \right)  \left( k+1 \right) }{k \left( k{q}^{2}-{ G_{{2}}} \right) }}
    &\\E_{(0,2,0)}^{(5)} &= {{ x_2}}^{2}+{\frac {{{ x_1}}^{2} \left( k-1 \right)  \left( k+1 \right) { G_{{2}}}^{2}}{k \left( k{q}^{2}-{ G_{{2}}}^{3} \right) }}
\end{flalign*}
\begin{flalign*}
    E_{(0,0,2)}^{(1)} &= {{ x_3}}^{2}+{\frac {{{ x_2}}^{2} \left( k-1 \right)  \left( k+1 \right) }{ \left( kq-1 \right)  \left( kq+1 \right) }}+{\frac {{{ x_1}}^{2} \left( k-1 \right)  \left( k+1 \right) {k}^{2}}{ \left( q{k}^{2}-1 \right)  \left( q{k}^{2}+1 \right) }}+{\frac {{{ x_1}}^{2} \left( k-1 \right) ^{2} \left( k+1 \right) ^{2}}{ \left( kq-1 \right)  \left( kq+1 \right)  \left( q{k}^{2}-1 \right)  \left( q{k}^{2}+1 \right) }}
    \\&+{\frac {{ x_2}\,{ x_3}\, \left( k-1 \right)  \left( k+1 \right) }{q{k}^{2}-1}}+{\frac {{ x_2}\,{ x_3}\, \left( k-1 \right) ^{2} \left( k+1 \right) ^{2}q}{ \left( q{k}^{2}-1 \right)  \left( kq-1 \right)  \left( kq+1 \right) }}+{\frac {{ x_1}\,{ x_2}\, \left( k-1 \right) ^{2} \left( k+1 \right) ^{2}{k}^{2}q}{ \left( q{k}^{2}-1 \right) ^{2} \left( q{k}^{2}+1 \right) }}
    \\&+{\frac {{ x_3}\,{ x_1}\, \left( k-1 \right) ^{3} \left( k+1 \right) ^{3}q}{ \left( kq-1 \right)  \left( kq+1 \right)  \left( q{k}^{2}-1 \right) ^{2} \left( q{k}^{2}+1 \right) }}
    +{\frac {{ x_3}\,{ x_1}\, \left( k-1 \right)  \left( k+1 \right) {k}^{2}}{{k}^{4}q-1}}+{\frac {{ x_1}\,{ x_2}\, \left( k-1 \right) ^{2} \left( k+1 \right) ^{2}}{ \left( kq-1 \right)  \left( kq+1 \right)  \left( {k}^{4}q-1 \right) }}
    \\&+{\frac {{ x_3}\,{ x_1}\, \left( k-1 \right) ^{2} \left( k+1 \right) ^{2}{k}^{2}q}{ \left( q{k}^{2}-1 \right)  \left( q{k}^{2}+1 \right)  \left( {k}^{4}q-1 \right) }}
    +{\frac {{ x_1}\,{ x_2}\, \left( k-1 \right) ^{3} \left( k+1 \right) ^{3}{k}^{2}q}{ \left( kq-1 \right)  \left( kq+1 \right)  \left( q{k}^{2}-1 \right)  \left( q{k}^{2}+1 \right)  \left( {k}^{4}q-1 \right) }}
    \\&+{\frac {{ x_3}\,{ x_1}\, \left( k-1 \right) ^{2} \left( k+1 \right) ^{2}}{ \left( q{k}^{2}-1 \right)  \left( {k}^{4}q-1 \right) }}+{\frac {{ x_1}\,{ x_2}\, \left( k-1 \right) ^{3} \left( k+1 \right) ^{3}{k}^{2}q}{ \left( kq-1 \right)  \left( kq+1 \right)  \left( q{k}^{2}-1 \right)  \left( {k}^{4}q-1 \right) }}
    \\&+{\frac {{ x_3}\,{ x_1}\, \left( k-1 \right) ^{3} \left( k+1 \right) ^{3}{k}^{4}{q}^{2}}{ \left( q{k}^{2}-1 \right) ^{2} \left( q{k}^{2}+1 \right)  \left( {k}^{4}q-1 \right) }}+{\frac {{ x_1}\,{ x_2}\, \left( k-1 \right) ^{4} \left( k+1 \right) ^{4}q}{ \left( kq-1 \right)  \left( kq+1 \right)  \left( q{k}^{2}-1 \right) ^{2} \left( q{k}^{2}+1 \right)  \left( {k}^{4}q-1 \right) }}
    &\\E_{(0,0,2)}^{(2)} &= {{ x_3}}^{2}+{\frac {{{ x_2}}^{2} \left( k-1 \right)  \left( k+1 \right) }{ \left( kq-1 \right)  \left( kq+1 \right) }}+{\frac {{{ x_1}}^{2} \left( k-1 \right)  \left( k+1 \right) {k}^{2}}{ \left( q{k}^{2}-1 \right)  \left( q{k}^{2}+1 \right) }}+{\frac {{{ x_1}}^{2} \left( k-1 \right) ^{2} \left( k+1 \right) ^{2}}{ \left( kq-1 \right)  \left( kq+1 \right)  \left( q{k}^{2}-1 \right)  \left( q{k}^{2}+1 \right) }}
    &\\E_{(0,0,2)}^{(3)} &= \displaystyle {{ x_3}}^{2}-{\frac {{{ x_2}}^{2} \left( k-1 \right)  \left( k+1 \right)  G_{{1}}}{-{q}^{2}{ G_{{1}}}^{3}+k}}+{\frac {{{ x_1}}^{2} \left( k-1 \right)  \left( k+1 \right) { G_{{1}}}^{2}}{k{q}^{2}{ G_{{1}}}^{3}-1}}-{\frac {{{ x_1}}^{2} \left( k-1 \right) ^{2} \left( k+1 \right) ^{2}{ G_{{1}}}^{2}}{k \left( -{q}^{2}{ G_{{1}}}^{3}+k \right)  \left( k{q}^{2}{ G_{{1}}}^{3}-1 \right) }}
    &\\E_{(0,0,2)}^{(4)} &= {{ x_3}}^{2}-{\frac {{{ x_2}}^{2} \left( k-1 \right)  \left( k+1 \right) }{k{ G_{{2}}}-{q}^{2}}}+{\frac {{{ x_1}}^{2} \left( k-1 \right)  \left( k+1 \right)  G_{{2}}}{k{q}^{2}-{ G_{{2}}}}}-{\frac {{{ x_1}}^{2} \left( k-1 \right) ^{2} \left( k+1 \right) ^{2}}{k \left( k{ G_{{2}}}-{q}^{2} \right)  \left( k{q}^{2}-{ G_{{2}}} \right) }}
    &\\E_{(0,0,2)}^{(5)} &= {{ x_3}}^{2}-{\frac {{{ x_2}}^{2} \left( k-1 \right)  \left( k+1 \right) { G_{{2}}}^{2}}{k{ G_{{2}}}^{3}-{q}^{2}}}+{\frac {{{ x_1}}^{2} \left( k-1 \right)  \left( k+1 \right)  G_{{2}}}{k{q}^{2}-{ G_{{2}}}^{3}}}-{\frac {{{ x_1}}^{2} \left( k-1 \right) ^{2} \left( k+1 \right) ^{2}{ G_{{2}}}^{4}}{k \left( k{ G_{{2}}}^{3}-{q}^{2} \right)  \left( k{q}^{2}-{ G_{{2}}}^{3} \right) }}
\end{flalign*}
}

\section{Examples (symmetric)}\label{appendix: symmetric}

Finally, we give several examples of symmetric SSV polynomials $P_\mu^\enn$, computed by directly using Theorem~\ref{thm: symmetric} (with no terms combined). We make the same simplifications as in the previous appendix. 

{\small
    
\begin{flalign*}
    P_{(0,0,0)}^{(1)}  &={k}^{6}+2\,{k}^{4}+2\,{k}^{2}+1
    &\\P_{(0,0,0)}^{(2)}  &={k}^{6}+2\,{k}^{4}+2\,{k}^{2}+1
    &\\P_{(0,0,0)}^{(3)}  &={k}^{6}+2\,{k}^{4}+2\,{k}^{2}+1
    &\\P_{(0,0,0)}^{(4)}  &={k}^{6}+2\,{k}^{4}+2\,{k}^{2}+1
    &\\P_{(0,0,0)}^{(5)}  &={k}^{6}+2\,{k}^{4}+2\,{k}^{2}+1
\end{flalign*}
\begin{flalign*}
    P_{(1,0,0)}^{(1)}  &={x_1}\,{k}^{2}+{x_2}\,{k}^{2}+{x_3}\,{k}^{2}+{x_1}+{x_2}+{x_3}
    &\\P_{(1,0,0)}^{(2)}  &={x_3}\,{k}^{4}+{x_2}\,{k}^{3}G_1+{x_3}\,{k}^{2}+{x_1}\,{k}^{2}
    +{x_2}\,kG_1+{x_1}
    &\\P_{(1,0,0)}^{(3)}  &={x_3}\,{k}^{4}{G_1}^{2}+{x_2}\,{k}^{3}G_1+{x_3}\,{k}^{2}{G_1}^{2}+{x_1}\,{k}^{2}
    +{x_2}\,kG_1+{x_1}
    &\\P_{(1,0,0)}^{(4)}  &={x_3}\,{k}^{4}{G_1}^{2}+{x_2}\,{k}^{3}G_1+{x_3}\,{k}^{2}{G_1}^{2}+{x_1}\,{k}^{2}
    +{x_2}\,kG_1+{x_1}
    &\\P_{(1,0,0)}^{(5)}  &={x_3}\,{k}^{4}{G_1 }^{2}+{x_2}\,{k}^{3}G_1 +{x_3}\,{k}^{2}{G_1 }^{2}+{x_1}\,{k}^{2}
    +{x_2}\,kG_1 +{x_1}
\end{flalign*}
\begin{flalign*}
    P_{(1,1,0)}^{(1)} &= {x_1}\,{x_2}\,{k}^{2}+{x_1}\,{x_3}\,{k}^{2}+{x_2}\,{x_3}\,{k}^{2}+{x_1}\,{x_2}+{x_1}\,{x_3}+{x_2}\,{x_3}
    &\\P_{(1,1,0)}^{(2)} &= {x_2}\,{x_3}\,{k}^{4}+{x_1}\,{x_3}\,{k}^{3}G_1+{x_2}\,{x_3}\,{k}^{2}
    +{x_1}\,{x_2}\,{k}^{2}+{x_1}\,{x_3}\,kG_1+{x_1}\,{x_2}
    &\\P_{(1,1,0)}^{(3)} &= {x_2}\,{x_3}\,{k}^{4}{G_1}^{2}+{x_1}\,{x_3}\,{k}^{3}G_1+{x_2}\,{x_3}\,{k}^{2}{G_1}^{2}
    +{x_1}\,{x_2}\,{k}^{2}+{x_1}\,{x_3}\,kG_1+{x_1}\,{x_2}
    &\\P_{(1,1,0)}^{(4)} &= {x_2}\,{x_3}\,{k}^{4}{G_1}^{2}+{x_1}\,{x_3}\,{k}^{3}G_1+{x_2}\,{x_3}\,{k}^{2}{G_1}^{2}
    +{x_1}\,{x_2}\,{k}^{2}+{x_1}\,{x_3}\,kG_1+{x_1}\,{x_2}
    &\\P_{(1,1,0)}^{(5)} &={x_2}\,{x_3}\,{k}^{4}{G_1 }^{2}+{x_1}\,{x_3}\,{k}^{3}G_1 +{x_2}\,{x_3}\,{k}^{2}{G_1 }^{2}
    +{x_1}\,{x_2}\,{k}^{2}+{x_1}\,{x_3}\,kG_1 +{x_1}\,{x_2}
\end{flalign*}
\begin{flalign*}
    P_{(2,0,0)}^{(1)}  &={{x_1}}^{2}
    +{\frac {{x_1}\,{x_3}\, \left( k-1 \right)  \left( k+1 \right) q}{q{k}^{2}-1}}
    +{\frac {{x_1}\,{x_2}\, \left( k-1 \right)  \left( k+1 \right) {k}^{2}q}{q{k}^{4}-1}}
    +{\frac {{x_1}\,{x_2}\, \left( k-1 \right) ^{2} \left( k+1 \right) ^{2}q}{ \left( q{k}^{2}-1 \right)  \left( q{k}^{4}-1 \right) }}
    \\&+{{x_1}}^{2}{k}^{2}
    +{\frac {{x_1}\,{x_2}\, \left( k-1 \right)  \left( k+1 \right) {k}^{2}q}{q{k}^{2}-1}}
    +{\frac {{x_1}\,{x_3}\,{k}^{2} \left( k-1 \right)  \left( k+1 \right) q}{q{k}^{4}-1}}
    +{\frac {{x_1}\,{x_3}\,{k}^{4} \left( k-1 \right) ^{2} \left( k+1 \right) ^{2}{q}^{2}}{ \left( q{k}^{2}-1 \right)  \left( q{k}^{4}-1 \right) }}
    \\&+{{x_2}}^{2}
    +{\frac {{x_2}\,{x_3}\, \left( k-1 \right)  \left( k+1 \right) q}{q{k}^{2}-1}}
    +{\frac {{x_1}\,{x_2}\, \left( k-1 \right)  \left( k+1 \right) }{q{k}^{4}-1}}
    +{\frac {{x_1}\,{x_2}\,{k}^{2} \left( k-1 \right) ^{2} \left( k+1 \right) ^{2}q}{ \left( q{k}^{2}-1 \right)  \left( q{k}^{4}-1 \right) }}
    +{{x_2}}^{2}{k}^{2}
    \\&+{\frac {{x_1}\,{x_2}\,{k}^{2} \left( k-1 \right)  \left( k+1 \right) }{q{k}^{2}-1}}
    +{\frac {{x_2}\,{x_3}\,{k}^{4} \left( k-1 \right)  \left( k+1 \right) q}{q{k}^{4}-1}}
    +{\frac {{x_2}\,{x_3}\,{k}^{2} \left( k-1 \right) ^{2} \left( k+1 \right) ^{2}q}{ \left( q{k}^{2}-1 \right)  \left( q{k}^{4}-1 \right) }}
    +{{x_3}}^{2}
    \\&+{\frac {  {x_2}\,{x_3}\left( k+1 \right)  \left( k-1 \right)}{q{k}^{2}-1}}
    +{\frac {{x_1}\,{x_3}\,{k}^{2} \left( k-1 \right)  \left( k+1 \right) }{q{k}^{4}-1}}
    +{\frac {{x_1}\,{x_3}\, \left( k-1 \right) ^{2} \left( k+1 \right) ^{2}}{ \left( q{k}^{2}-1 \right)  \left( q{k}^{4}-1 \right) }}
    +{{x_3}}^{2}{k}^{2}
    \\&+{\frac {{x_1}\,{x_3}\,{k}^{2} \left( k-1 \right)  \left( k+1 \right) }{q{k}^{2}-1}}
    +{\frac {{x_2}\,{x_3}\,{k}^{2} \left( k-1 \right)  \left( k+1 \right) }{q{k}^{4}-1}}
    +{\frac {{x_2}\,{x_3}\,{k}^{4} \left( k-1 \right) ^{2} \left( k+1 \right) ^{2}q}{ \left( q{k}^{2}-1 \right)  \left( q{k}^{4}-1 \right) }}
    &\\P_{(2,0,0)}^{(2)}  &={{x_1}}^{2}{k}^{2}+{{x_2}}^{2}{k}^{2}+{{x_3}}^{2}{k}^{2}+{{x_1}}^{2}+{{x_2}}^{2}+{{x_3}}^{2}
    &\\P_{(2,0,0)}^{(3)}  &={{x_1}}^{2}+{{x_1}}^{2}{k}^{2}+{\frac {{{x_2}}^{2}k}{G_1}}+{\frac {{{x_2}}^{2}{k}^{3}}{G_1}}+{\frac {{{x_3}}^{2}{k}^{2}}{{G_1}^{2}}}
    +{\frac {{{x_3}}^{2}{k}^{4}}{{G_1}^{2}}}
    &\\P_{(2,0,0)}^{(4)}  &={{x_3}}^{2}{k}^{4}+{{x_2}}^{2}{k}^{3}G_{2}+{{x_3}}^{2}{k}^{2}
    +{{x_1}}^{2}{k}^{2}+{{x_2}}^{2}kG_{2}+{{x_1}}^{2}
    &\\P_{(2,0,0)}^{(5)}  &={{x_3}}^{2}{k}^{4}{G_2 }^{2}+{{x_2}}^{2}{k}^{3}G_2 +{{x_3}}^{2}{k}^{2}{G_2 }^{2}
    +{{x_1}}^{2}{k}^{2}+{{x_2}}^{2}kG_2 +{{x_1}}^{2}
\end{flalign*}
}

\end{document}